\title{Fredholm theory and transversality for the \\parametrized and
for the $S^1$-invariant\\symplectic action} 
\author{Fr\'ed\'eric {\sc Bourgeois}, \ Alexandru \sc{Oancea} 
           \\ \quad \\ 
         {\it \small Universit\'e Libre de Bruxelles, B-1050 Bruxelles, 
Belgium} \\ 
         {\it \small IRMA, Universit\'e de Strasbourg, F-67084
           Strasbourg, France}  
} 
\date{22 September 2009}
\newtheorem{PARA}{}[section] 
\newtheorem{theorem}[PARA]{Theorem} 
\newtheorem{lemma}[PARA]{Lemma} 
\newtheorem{proposition}[PARA]{Proposition} 
\newtheorem{definition}[PARA]{Definition} 
\theoremstyle{definition} 
\newtheorem{remark}[PARA]{Remark} 
\newtheorem{example}[PARA]{Example} 
\numberwithin{equation}{section} 
\newcommand{\para}{\begin{PARA}\rm} 
\newcommand{\arap}{\end{PARA}\rm} 
\newcommand{\dfn}{\begin{definition}\rm} 
\newcommand{\nfd}{\end{definition}\rm} 
\newcommand{\rmk}{\begin{remark}\rm} 
\newcommand{\kmr}{\end{remark}\rm} 
\newcommand{\xmpl}{\begin{example}\rm} 
\newcommand{\lpmx}{\end{example}\rm} 
\newcommand{\cA}{\mathcal{A}} 
\newcommand{\cB}{\mathcal{B}}
\newcommand{\cE}{\mathcal{E}} 
\newcommand{\cH}{\mathcal{H}} 
\newcommand{\cI}{\mathcal{I}} 
\newcommand{\cJ}{\mathcal{J}} 
\newcommand{\cL}{\mathcal{L}} 
\newcommand{\cM}{\mathcal{M}}
\newcommand{\cP}{\mathcal{P}}
\newcommand{\cU}{\mathcal{U}} 
\newcommand{\cV}{\mathcal{V}} 
\newcommand{\cW}{\mathcal{W}}
\newcommand{\oD}{{\overline{D}}} 
\newcommand{\uD}{{\underline{D}}} 
\newcommand{\of}{{\overline{f}}} 
\newcommand{\uf}{{\underline{f}}} 
\newcommand{\og}{{\overline{\gamma}}} 
\newcommand{\ug}{{\underline{\gamma}}}
\newcommand{\oK}{{\overline{K}}} 
\newcommand{\uK}{{\underline{K}}} 
\newcommand{\olambda}{{\overline{\lambda}}} 
\newcommand{\ulambda}{{\underline{\lambda}}} 
\newcommand{\op}{{\overline{p}}} 
\newcommand{\up}{{\underline{p}}}
\newcommand{\os}{{\overline{s}}}
\newcommand{\ou}{\overline{u}} 
\newcommand{\uu}{\underline{u}}
\newcommand{\ox}{\overline{x}} 
\newcommand{\ux}{\underline{x}}
\newcommand{\otheta}{\overline{\theta}} 
\newcommand{\one} 
{{{\mathchoice \mathrm{ 1\mskip-4mu l} \mathrm{ 1\mskip-4mu l} 
\mathrm{ 1\mskip-4.5mu l} \mathrm{ 1\mskip-5mu l}}}} 
\newcommand{\C}{{\mathbb{C}}}
\newcommand{\R}{{\mathbb{R}}}
\newcommand{\Z}{{\mathbb{Z}}} 
\newcommand{\im}{\mathrm{ im }}        % image 
\renewcommand{\Re}{\mathrm{ Re\,}}       % real part 
\newcommand{\reg}{{\mathrm{reg}}} 
\newcommand{\eps}{{\varepsilon}} 
\newcommand{\om}{{\omega}}
\renewcommand{\th}{{\widetilde{h}}} 
\newcommand{\tH}{{\widetilde{H}}} 
\newcommand{\tJ}{{\widetilde{J}}} 
\newcommand{\tlambda}{{\widetilde{\lambda}}}
\newcommand{\tR}{{\widetilde{R}}}
\newcommand{\tU}{{\widetilde{U}}} 
\newcommand{\tV}{{\widetilde{V}}}
\newcommand{\tY}{{\widetilde{Y}}} 
\def\NABLA#1{{\mathop{\nabla\kern-.5ex\lower1ex\hbox{$#1$}}}} 
\def\Nabla#1{\nabla\kern-.5ex{}_{#1}} 
\def\Tabla#1{\Tilde\nabla\kern-.5ex{}_{#1}} 
\renewcommand{\Tilde}{\widetilde}
\newcommand{\p}{{\partial}}
\begin{document} 
 
\maketitle 
 
%%%%%%%%%%%%%%%%%%%%%%%%%%%%%%%%%%%%%%%%%%%%%% 
%%%%%%%%%%%%%%%%%%%%%%%%%%%%%%%%%%%%%%%%%%%%%% 
%%%%%%%%%%%%%%%% Abstract %%%%%%%%%%%%%%%%%%%% 
%%%%%%%%%%%%%%%%%%%%%%%%%%%%%%%%%%%%%%%%%%%%%% 
%%%%%%%%%%%%%%%%%%%%%%%%%%%%%%%%%%%%%%%%%%%%%% 

\begin{abstract} 
We study the parametrized Hamiltonian action functional for finite-dimensional families of Hamiltonians. We show that the linearized operator for the $L^2$-gradient lines is Fredholm and surjective, for a generic choice of Hamiltonian and almost complex structure. We also establish the Fredholm property and transversality for generic $S^1$-invariant families of Hamiltonians and almost complex structures, parametrized by odd-dimensional spheres. This is a foundational result used to define $S^1$-equivariant Floer homology. As an intermediate result of independent interest, we generalize Aronszajn's unique continuation theorem to a class of elliptic integro-differential inequalities of order two.
\end{abstract}

\tableofcontents 
 
%%%%%%%%%%%%%%%%%%%%%%%%%%%%%%%%%%%%%%%%%%%%%% 
%%%%%%%%%%%%%%%%%%%%%%%%%%%%%%%%%%%%%%%%%%%%%% 
%%%%%%%%%%%%%%%% Section 1 %%%%%%%%%%%%%%%%%%% 
%%%%%%%%%%%%%%%%%%%%%%%%%%%%%%%%%%%%%%%%%%%%%% 
%%%%%%%%%%%%%%%%%%%%%%%%%%%%%%%%%%%%%%%%%%%%%% 
 
\section{Introduction} 
 
\noindent \emph{\sc Motivation~I.} 
Hamiltonian Floer homology is commonly referred to as Morse homology
for the symplectic action functional on the free loop space of a
symplectic manifold. One of the most
important features of the free loop space is that it carries an
$S^1$-action by reparametrization at the source 
$$
(\tau\cdot \gamma)(\theta):=\gamma(\theta-\tau), \qquad \tau\in S^1,
\ \gamma:S^1\to W,
$$
where $(W,\omega)$ is the target symplectic manifold. It was
realized at an early stage of the theory that Floer 
homology should admit an $S^1$-equivariant version. In the last
paragraph of the foundational article~\cite{FHS}, Floer, Hofer, and
Salamon explicitly set the goal of constructing it.  

Such an $S^1$-equivariant
theory was first defined by Viterbo~\cite{V}, in the context of symplectic
homology. Viterbo's paper contains a wealth of structural properties
with rich applications, but it does not give any kind of technical details
for the definition. The present paper grew out of
our efforts to understand $S^1$-equivariant Floer
homology and put it on firm grounds.  

The topological motivation of the definition is the following. Let $X$ be a topological
space endowed with an $S^1$-action, and  
$ES^1$ be a contractible space on which $S^1$ acts freely. 
 The \emph{Borel construction of $X$}, denoted  $X_{S^1}$, is defined
to be the quotient of $X\times ES^1$ by the free diagonal action. 
The \emph{$S^1$-equivariant homology of $X$} is defined to be 
$$
H_*^{S^1}(X):=H_*(X_{S^1}). 
$$
Taking as a model for $ES^1$ the inductive limit
${\displaystyle\lim_\to} \ S^{2N+1}$ of
the unit spheres $S^{2N+1}\subset \C^{N+1}$, one sees that
$X_{S^1}={\displaystyle\lim_\to} \ 
X\times_{S^1} S^{2N+1}$. Moreover, we have 
$$
H_*^{S^1}(X)=\lim_\to H_*(X\times_{S^1} S^{2N+1}). 
$$

Assume now that $X$ is a finite dimensional manifold. Morse
theory on the finite dimensional approximation $X\times_{S^1}
S^{2N+1}$ of the Borel construction is the same as $S^1$-invariant 
Morse theory on $X\times S^{2N+1}$. 
Viterbo's idea is to define $S^1$-equivariant Floer homology as the
direct limit of $S^1$-invariant Floer homology groups for
$S^1$-invariant action functionals defined on $C^\infty(S^1,W)\times
S^{2N+1}$. The latter space carries the diagonal $S^1$-action 
$$
\tau\cdot(\gamma,\lambda)\mapsto
(\gamma(\cdot-\tau),\tau\cdot\lambda).
$$ 

\medskip 

\noindent \emph{\sc The equation.} Let $H:S^1\times W\times S^{2N+1}\to
\R$, $H=H(\theta,x,\lambda)$ be a smooth function, which we view as an
$S^{2N+1}$-family of Hamiltonians $H_\lambda:S^1\times W\to \R$. Let
$J^\theta_\lambda$, $\theta\in S^1$, $\lambda\in S^{2N+1}$ be an
$S^{2N+1}$-family of time-dependent almost complex structures which
are compatible with $\omega$. Let $g$ be a Riemannian metric on
$S^{2N+1}$. The \emph{parametrized Floer equation} for a pair of maps
$u:\R\times S^1 \to W$ and $\lambda:\R\to S^{2N+1}$ is the integro-differential system 
\begin{eqnarray}  
\label{eq:MAIN1} 
 \p_s u + J_{\lambda(s)}^\theta (\p_\theta u - 
X_{H_{\lambda(s)}}^\theta (u)) & = & 0, \\  
\label{eq:MAIN2} 
 \dot \lambda (s) - \int_{S^1} \vec \nabla_\lambda 
H(\theta,u(s,\theta),\lambda(s)) d\theta & = & 0,   
\end{eqnarray} 
subject to the asymptotic conditions
\begin{equation} \label{eq:MAINasymptotic} 
 \lim_{s\to -\infty} (u(s,\cdot),\lambda(s)) = (\og,\olambda), \quad  
 \lim_{s\to +\infty} (u(s,\cdot),\lambda(s)) = (\ug,\ulambda), 
\end{equation} 
where $(\og,\olambda)$, $(\ug,\ulambda)$ are elements of 
\begin{equation} \label{eq:PH}
 \cP(H):=\big\{(\gamma,\lambda) \, : \, 
   \dot\gamma-X_{H_\lambda}(\gamma)  =  0,
 \quad \int_{S^1} \frac {\p H} {\p \lambda} (\theta,\gamma(\theta),\lambda)\, 
 d\theta  =  0 \big\}. 
\end{equation} 
Here and in the sequel we use the notation $\vec \nabla$ for a 
gradient vector field, whereas $\nabla$ will denote a 
covariant derivative. Our convention for the Hamiltonian vector field
is that $\omega(X_H,\cdot)=dH$. 

The Fredholm and transversality analysis contained in this
paper apply to any symplectic manifold and any component of the free
loop space of $W$. However, in order to
interpret~(\ref{eq:MAIN1}--\ref{eq:MAIN2}) as a (negative) gradient equation, it is convenient to
restrict to the component $C^\infty_{\mathrm{contr}}(S^1,W)$ of
contractible loops and to assume 
that $(W,\omega)$ is symplectically aspherical, i.e. $\langle
[\omega],\pi_2(W)\rangle =0$. The 
equations~(\ref{eq:MAIN1}--\ref{eq:MAIN2}) are in this case the
negative gradient equations 
of the \emph{parametrized action functional} 
$$
\cA : C^\infty_{\mathrm{contr}}(S^1,\widehat W)\times S^{2N+1}\to \R,
$$
defined by  
\begin{equation} \label{eq:INTROA} 
\cA(\gamma,\lambda)  := -\int_{D^2}
\overline\gamma^*  
\om - \int_{S^1} H_\lambda(\theta,\gamma(\theta)) d\theta.
\end{equation} 
Here $\overline\gamma:D^2\to W$ is a smooth extension of $\gamma$ to
the disc. The metric on $C^\infty_{\mathrm{contr}}(S^1,W)\times
S^{2N+1}$ is the product of the ($\lambda$\,-\,dependent) $L^2$-metric determined by
$(J^\theta_\lambda)_{\theta\in S^1}$ with the metric $g$. The elements
of $\cP(H)$ are the critical points of $\cA$. 

\medskip

\noindent \emph{\sc $S^1$-invariance.} Let us now assume that $H$ and $J$
are $S^1$-invariant with respect to the diagonal $S^1$-action on
$S^1\times S^{2N+1}$, meaning that 
\begin{equation} \label{eq:INV} 
H_{\tau\lambda}(\theta+\tau,\cdot)=H_\lambda(\theta,\cdot), \qquad 
J_{\tau\lambda}^{\theta+\tau}=J_\lambda^\theta
\end{equation} 
for all $\theta\in S^1$, $\tau\in S^1$, $\lambda\in S^{2N+1}$. Let us
also assume that the metric $g$ on $S^{2N+1}$ is $S^1$-invariant. Then
equations~(\ref{eq:MAIN1}--\ref{eq:MAIN2}) are invariant under the
diagonal $S^1$-action on $C^\infty_{\mathrm{contr}}(S^1,W)\times
S^{2N+1}$. 

Equation~\eqref{eq:MAINasymptotic} is not, since $S^1$ acts
freely on the asymptotes $\op=(\og,\olambda)$,
$\up=(\ug,\ulambda)$. To fix this, we introduce the $S^1$-orbits
$$
S_p:=S^1\cdot p, \qquad p\in\cP(H), 
$$
and the condition 
\begin{equation} \label{eq:MAINasymptotic-bis} 
 \lim_{s\to -\infty} (u(s,\cdot),\lambda(s)) \in S_\op, \quad  
 \lim_{s\to +\infty} (u(s,\cdot),\lambda(s)) \in S_\up. 
\end{equation} 

The results of Sections~\ref{sec:S1inv} and~\ref{sec:transvS1} are
summarized in the following statement. 

\medskip 

\noindent {\bf Theorem~A.} {\it 
\renewcommand{\theenumi}{(\alph{enumi})}
\begin{enumerate} 
\item \label{item:INTROa} For a generic choice of the $S^1$-invariant Hamiltonian $H$, and 
  for any choice of $S^1$-invariant $(J,g)$, the operator which
  linearizes~(\ref{eq:MAIN1}--\ref{eq:MAIN2}) is Fredholm between
  Sobolev spaces with suitable exponential weights.
\item \label{item:INTROb} There exists an explicit class consisting of
  $S^1$-invariant triples $(H,J,g)$ with $H$ as above such that, for a generic choice of
  $(H,J,g)$ inside this class, the Fredholm operator which
  linearizes~(\ref{eq:MAIN1}--\ref{eq:MAIN2}) is surjective for all
  solutions of~(\ref{eq:MAIN1}--\ref{eq:MAINasymptotic}) and all
  $\op,\up\in\cP(H)$.
\end{enumerate} 
}

Part~\ref{item:INTROb} is proved as Theorem~\ref{thm:transvS1}. As a
 matter of fact, that theorem is more precise. It states that we can
 achieve transversality within a special class of almost 
 complex structures (called \emph{adapted},
 Definition~\ref{defi:adaptedJ}), after possibly perturbing a
 Hamiltonian which is either generic (in the sense that it belongs to
 the class $\cH_{\mathrm{gen}}$ defined in Section~\ref{sec:transvS1}), or
 split (in the sense that it belongs to the class
 $\cH_{\mathrm{split}}$, loc. cit.). In the case of split Hamiltonians, our proof
 works under the assumption that $W$ is symplectically aspherical. For
 generic Hamiltonians, this assumption is not used.   

\medskip 

The Hamiltonians satisfying~\ref{item:INTROa} are those for which the
Hessian of $\cA$ at a critical point is degenerate only along the
infinitesimal generator of the $S^1$-action. As a consequence
of~\ref{item:INTROb}, for a generic choice of $(H,J,g)$ inside the
given class, the spaces of trajectories 
$$
\widehat \cM(\op,\up;H,J,g):=\{(u,\lambda) \mbox{
  solving~(\ref{eq:MAIN1}, \ref{eq:MAIN2}, \ref{eq:MAINasymptotic})} \, \}  
$$
and 
$$
\widehat \cM(S_\op,S_\up;H,J,g):=\{(u,\lambda) \mbox{
  solving~(\ref{eq:MAIN1}, \ref{eq:MAIN2}, \ref{eq:MAINasymptotic-bis})} \, \}  
$$
are smooth manifolds, for all $\op,\up\in\cP(H)$. Viterbo's definition
of $S^1$-equivariant Floer homology relies on counting modulo the
$S^1$-action the elements of the moduli spaces 
$$
\cM(S_\op,S_\up;H,J,g):= \widehat \cM(S_\op,S_\up;H,J,g)/\R.
$$

\medskip 

\noindent \emph{\sc The parameter space.}
In part~\ref{item:INTROa} of the above theorem we
need to consider the linearized operator acting between weighted
Sobolev spaces. This is necessary since, for a generic choice of
the $S^1$-invariant Hamiltonian $H$, the 
elements of $\cP(H)$ come in Morse-Bott nondegenerate families of
dimension $1$ given by the free $S^1$-action.
In order to prove the Fredholm property, one first has to 
establish it for operators of the same form and having nondegenerate
asymptotics. This corresponds to considering the linearization of
equations~(\ref{eq:MAIN1}--\ref{eq:MAIN2}) for a generic and \emph{non}-invariant $H$.

The point is that equations~(\ref{eq:MAIN1}--\ref{eq:MAINasymptotic})
and the action functional~\eqref{eq:INTROA} still make sense if one
replaces the parameter space $S^{2N+1}$ by some arbitrary manifold
$\Lambda$, and so do the spaces of trajectories $\widehat
\cM(\op,\up;H,J,g)$. 

We summarize the results of Sections~\ref{sec:param}
and~\ref{sec:param-transv} in the following statement. 

\medskip 

\noindent {\bf Theorem~B.} {\it Let $\Lambda$ be an arbitrary finite
  dimensional parameter space. 
\renewcommand{\theenumi}{(\alph{enumi})}
\begin{enumerate} 
\item \label{item:INTROa-notS1} For a generic choice of $H$ and 
  for any choice of $(J,g)$, the operator which
  linearizes~(\ref{eq:MAIN1}--\ref{eq:MAIN2}) is Fredholm between 
  suitable Sobolev spaces.
\item \label{item:INTROb-notS1} For a generic choice of the triple
  $(H,J,g)$, the Fredholm operator which
  linearizes~(\ref{eq:MAIN1}--\ref{eq:MAIN2}) is surjective for all
  solutions of~(\ref{eq:MAIN1}--\ref{eq:MAINasymptotic}) and all
  $\op,\up\in\cP(H)$.
\end{enumerate} 
}

\medskip

The Hamiltonians satisfying~\ref{item:INTROa-notS1} are those for which the
Hessian of $\cA$ at a critical point is nondegenerate. As a consequence
of~\ref{item:INTROb-notS1}, for a generic choice of $(H,J,g)$
the moduli spaces of parametrized Floer trajectories 
$$
\cM(\op,\up;H,J,g):= \widehat \cM(\op,\up;H,J,g)/\R
$$
are smooth manifolds, for all $\op,\up\in\cP(H)$. We use these moduli spaces
in~\cite{BO3}  
to define parametrized symplectic homology groups and establish
a Gysin long exact sequence for symplectic homology. 

\medskip 

\noindent \emph{\sc Motivation~II.} Our initial motivation was the desire
to interpret the long exact sequence in~\cite{BOcont} as a Gysin exact
sequence. To this effect, we prove in~\cite{BO4} that, given 
an aspherical
symplectic manifold $W$ with contact type boundary $M=\p W$, 
the (positive
part of) the $S^1$-equivariant symplectic homology of $W$ is isomorphic to
the linearized contact homology of $M$, provided the latter is
well-defined. Via this isomorphism, the long exact sequence
of~\cite{BOcont} is isomorphic to the Gysin exact sequence
of~\cite{BO4}. 

However, we believe that the present paper has ramifications
going well-beyond $S^1$-equivariant symplectic homology.  
\begin{itemize} 
\item \emph{Transversality in linearized contact homology.} The second
  author is currently developing with Cieliebak a version of
  ``non-equivariant'' contact homology~\cite{CO}. The Borel
  construction can be applied to it in order to define an invariant
  which is isomorphic to linearized
  contact homology. The results of the present paper will be
  instrumental to prove that
  transversality can be achieved for this theory, modulo having it for
  finite energy holomorphic planes or, alternatively, modulo the data
  of a linearization for the contact complex. Transversality can
  currently be achieved for linearized 
  contact homology only for homotopy classes of loops which contain
  only simple Reeb orbits.    
\item \emph{Lagrange multiplier problems.}
  Equations~(\ref{eq:MAIN1}--\ref{eq:MAIN2}) can be viewed as a Floer
  type Lagrange multiplier problem. To prove unique
  continuation for this integro-differential system, we were led to prove a
  generalization of Aronzsajn's theorem for 
  integro-differential inequalities (see below). This is relevant for any
  Floer-type problem involving an additional parameter space. Examples
  are Rabinowitz-Floer homology~\cite{CF}, or $G$-equivariant
  Floer homology~\cite{OW}.  
\item \emph{Floer homology for families.} Our methods can be extended
  in order to define parametrized Floer homology groups for a
  symplectic fibration. We expect these to coincide with the target of
  the Hutchings spectral sequence~\cite{Hu}. 
\item \emph{Relation to Givental's point of view.} Given a closed symplectic manifold $X$, Givental defined in~\cite{Gi} a $D$-module structure on 
$H^*(X;\C)\otimes \Lambda_{Nov}\otimes \C[\hbar]$, where $\Lambda_{Nov}$ is a suitable Novikov ring and $\hbar$ is the generator of $H^*(BS^1)$. He interprets this 
as being the $S^1$-equivariant Floer cohomology of $X$. Our construction of $S^1$-equivariant Floer homology in~\cite{BO3} provides an interpretation of the underlying homology group as the homology of a Floer-type complex. We expect that the $D$-module structure can also be defined within our setup.

\end{itemize} 

\medskip 

\noindent \emph{\sc Aronszajn's theorem.} We prove in
Section~\ref{sec:unique} the following 
unique continuation result for solutions of integro-differential
inequalities, as Theorem~\ref{thm:inequality}. This generalizes a
celebrated theorem of Aronszajn~\cite{Ar}. It allows one to prove
unique continuation for solutions of the
system~(\ref{eq:MAIN1}--\ref{eq:MAIN2}). 

\medskip 

\noindent {\bf Theorem~C.} 
{\it Let $h>0$ and denote $Z_h:=]-h,h[\times S^1$. Assume $u\in C^\infty(Z_h,\C^n)$ satisfies 
\begin{equation*} %\label{eq:inequality}
|\Delta u(s,\theta)|^2\le M\Big[
|u(s,\theta)|^2+ |\nabla u(s,\theta)|^2 + \int_{S^1}|u(s,\tau)|^2 \,
d\tau \Big]
\end{equation*}
for all $(s,\theta)\in Z_h$, where $M>0$ is a positive constant. If
$u$ vanishes together with all its derivatives on $\{0\}\times S^1$,
then $u\equiv 0$ on $Z_h$. 
}

\medskip 

\noindent \emph{\sc Non-compact setup.} We use the setup of
symplectic homology, since this was our initial motivation. 
The consequences are merely cosmetic, and the adaptation
to the setup of closed manifolds is straightforward.

\medskip 

\noindent \emph{\sc Structure of the paper.} In~\S\ref{sec:param} we
prove part~\ref{item:INTROa-notS1} of Theorem~B as
Proposition~\ref{prop:Hreg} and
Theorem~\ref{thm:Fredholm}. In~\S\ref{sec:unique} we prove several
results on unique continuation, and in particular Theorem~C as
Theorem~\ref{thm:inequality}. In~\S\ref{sec:param-transv} we prove
part~\ref{item:INTROb-notS1} of Theorem~B as
Theorem~\ref{thm:Jreg}. In~\S\ref{sec:S1inv} we prove
part~\ref{item:INTROa} of Theorem~A as
Propositions~\ref{prop:genericHS1} and~\ref{prop:indexMB}. In~\S\ref{sec:uniqueS1} we prove a unique continuation result needed for the $S^1$-invariant theory. 
Finally, in~\S\ref{sec:transvS1} we prove part~\ref{item:INTROb} of Theorem~A
as Theorem~\ref{thm:transvS1}.

\medskip 

\noindent \emph{\sc Acknowledgements.} We thank Luc Robbiano
for having read our proof of
Proposition~\ref{prop:semilocalCarleman}, and for having suggested an
alternative one. 

\smallskip

\noindent The authors were partially supported by the Minist\`ere Belge
des Affaires \'etrang\`eres and the Minist\`ere
Fran\c{c}ais des Affaires \'etrang\`eres et europ\'eennes, through the
programme PHC--Tournesol Fran\c{c}ais. F.B. was partially supported by the Fonds National de la Recherche Scientifique (Belgium). Both authors were partially
supported by ANR project ``Floer Power'' ANR-08-BLAN-0291-03 (France).

%%%%%%%%%%%%%%%%%%%%%%%%%%%%%%%%%%%%%%%%%%%%%% 
%%%%%%%%%%%%%%%%%%%%%%%%%%%%%%%%%%%%%%%%%%%%%% 
%%%%%%%%%%%%%%%% Section 2 %%%%%%%%%%%%%%%%%%% 
%%%%%%%%%%%%%%%%%%%%%%%%%%%%%%%%%%%%%%%%%%%%%% 
%%%%%%%%%%%%%%%%%%%%%%%%%%%%%%%%%%%%%%%%%%%%%% 

\section{Fredholm theory for the parametrized Floer equation} \label{sec:param}  

In this section, we prove part~\ref{item:INTROa-notS1} of Theorem~B. The setup is that of 
symplectic homology. Our ambient symplectic manifold, denoted $(\widehat
W,\widehat \omega)$, is the symplectic completion of a compact
symplectic manifold $(W,\omega)$ with contact type boundary.
This means that there exists a vector 
field $X$ defined in a neighbourhood of $\p W$, transverse and pointing 
outwards along $\p W$, such that $\cL _X \om = \om$. The $1$-form
$\alpha:=(\iota_X\om)|_{\p W}$ is a contact form, and the flow of
$X$ determines a symplectic trivialization of a neighbourhood of $\p
W$ as $([-\delta,0] \times \p W, d(e^t\alpha))$. The symplectic
completion is 
$$
\widehat W=W\cup_{\p W} [0,\infty[\,\times \, \p W. 
$$
Moreover, we assume that $\widehat W$ (or, equivalently, $W$) is
symplectically aspherical, i.e. $\langle \widehat
\omega,\pi_2(\widehat W)\rangle =0$. The Reeb
vector field $R_\alpha$ on $M:=\p W$ is defined by the conditions $\ker \, \om|_M =
\langle R_\alpha \rangle$ and $\alpha(R_\alpha)=1$. The contact
distribution on $M$ is defined by $\xi=\ker\,\alpha$. 
Finally, we define the {\bf action spectrum} of $(M,\alpha)$ by
$$ 
\textrm{Spec}(M,\alpha) := \{ T \in \R^+\, | \, \textrm{ there is a 
   closed } R_\alpha\textrm{-orbit of period } T\}. 
$$ 

Let $\Lambda$ denote a finite dimensional closed 
manifold of dimension $m$, which we call ``parameter space''. The 
elements of $\Lambda$ are denoted by $\lambda$. 

\medskip 
 
We define the set $\cH_\Lambda$ of 
{\bf admissible Hamiltonian families} to consist of elements  
$H\in C^\infty(S^1\times \widehat W\times \Lambda,\R)$ which satisfy 
the following conditions: 
\begin{itemize}  
\item $H<0$ on $S^1\times W\times \Lambda$;  
\item there exists $t_0\ge 0$ such that $H(\theta,p,t,\lambda)=\beta 
  e^t +\beta'(\lambda)$ for $t\ge t_0$, with $0<\beta\notin 
  \mathrm{Spec}(M,\alpha)$ and $\beta'\in C^\infty(\Lambda,\R)$. 
\end{itemize}  
 
Let $H:S^1 \times \widehat W \times \Lambda \to 
\R$ be an admissible Hamiltonian family denoted by 
$H(\theta,x,\lambda)=H_\lambda(\theta,x)$. The differential of the
corresponding action functional $\cA$ defined by~\eqref{eq:INTROA} is given by  
\begin{equation} \label{eq:dA} 
d\cA(\gamma,\lambda) \cdot (\zeta,\ell)= 
\int_{S^1}\om(\dot\gamma(\theta)-X_{H_\lambda}(\gamma(\theta)),\zeta(\theta)) 
d\theta 
- 
\int_{S^1} \frac {\p H} {\p \lambda} (\theta,\gamma(\theta),\lambda) d\theta 
\cdot \ell 
\end{equation}  
and therefore $(\gamma,\lambda)$ is a critical point of $\cA$ if and 
only if  
\begin{equation} \label{eq:periodicpar} 
 \gamma\in\cP(H_\lambda) \quad \mbox{and} \quad  
 \int_{S^1} \frac {\p H} {\p \lambda} (\theta,\gamma(\theta),\lambda)\, 
d\theta =0.  
\end{equation}  
In~\eqref{eq:PH} we denoted the set of critical points of $\cA$ by
$\cP(H)$. 
 
\begin{remark} {\rm  
 Equation~\eqref{eq:periodicpar} can be interpreted as follows. Every 
loop $\gamma:S^1\to\widehat W$ determines a function  
\begin{equation} \label{eq:Fgamma} 
F_\gamma:\Lambda \to \R, \qquad \lambda \mapsto \int_{S^1} 
H(\theta,\gamma(\theta),\lambda) \, d\theta.  
\end{equation}  
A pair $(\gamma,\lambda)$ belongs therefore to $\cP(H)$ if and only if 
$$ 
\gamma\in \cP(H_\lambda) \quad \mbox{ and } \quad \lambda\in 
\textrm{Crit}(F_\gamma). 
$$  
} 
\end{remark}  
 
Let $J=(J_\lambda^\theta)$, $\lambda\in\Lambda$, $\theta\in S^1$ be a 
family of $\theta$-dependent compatible  
almost complex structures on $\widehat W$ 
which, at infinity, are invariant under translations in the 
$t$-variable and satisfy the relations  
\begin{equation} \label{eq:standardJ} 
J_\lambda^\theta \xi=\xi, \qquad J_\lambda^\theta (\frac \partial 
{\partial t}) =R_\alpha. 
\end{equation} 
Such an {\bf admissible family of almost complex 
  structures} $J$ induces a  
  family of $L^2$-metrics on the space $C^\infty(S^1,\widehat W)$, parametrized 
  by $\Lambda$ and defined by   
$$ 
\langle \zeta,\eta\rangle_\lambda := \int_{S^1} 
\om(\zeta(\theta),J_\lambda^\theta\eta(\theta)) d\theta, \quad \zeta,\eta\in 
T_\gamma C^\infty(S^1,\widehat 
W)=\Gamma(\gamma^*T\widehat W). 
$$ 
Such a metric can be coupled with any metric $g$ on $\Lambda$ and 
gives rise to a metric on  
$C^\infty(S^1,\widehat W)\times \Lambda$ acting  
at a point $(\gamma,\lambda)$ by  
$$ 
\langle(\zeta,\ell), (\eta,k)\rangle_{J,g}:= \langle \zeta,\eta\rangle_\lambda + 
g(\ell,k), \qquad (\zeta,\ell),(\eta,k)\in \Gamma(\gamma^*T\widehat 
W)\oplus T_\lambda\Lambda.  
$$ 
We denote by $\cJ_\Lambda$ the set of pairs $(J,g)$ consisting of an 
admissible almost complex structure $J$ on $\widehat W$ and of a 
Riemannian metric $g$ on $\Lambda$. The {\bf parametrized Floer
  equations}~(\ref{eq:MAIN1}--\ref{eq:MAIN2}) are the gradient
equation for $\cA$ with respect to such a metric 
$\langle\cdot,\cdot\rangle_{J,g}$. For the reader's convenience, we
rewrite them:
\begin{eqnarray}  
\label{eq:Floer1par} 
 \p_s u + J_{\lambda(s)}^\theta (\p_\theta u - 
X_{H_{\lambda(s)}}^\theta (u)) & = & 0, \\  
\label{eq:Floer2par} 
 \dot \lambda (s) - \int_{S^1} \vec \nabla_\lambda 
H(\theta,u(s,\theta),\lambda(s)) d\theta & = & 0,   
\end{eqnarray} 
and, for $(\og,\olambda),(\ug,\ulambda)\in \cP(H)$, 
\begin{equation} \label{eq:asymptoticpar} 
 \lim_{s\to -\infty} (u(s,\cdot),\lambda(s)) = (\og,\olambda), \quad  
 \lim_{s\to +\infty} (u(s,\cdot),\lambda(s)) = (\ug,\ulambda). 
\end{equation}

\begin{remark}{\rm Equation~\eqref{eq:Floer2par} is equivalent to  
\begin{equation} \label{eq:Floer2parbis} 
 \dot \lambda(s) - \vec \nabla F_{u(s,\cdot)}(\lambda(s))=0, 
\end{equation}  
where $F_{u(s,\cdot)}$ is defined by~\eqref{eq:Fgamma}. Thus, the 
parametrized Floer equation is a system involving a Floer equation and 
a finite-dimensional gradient equation.  
} 
\end{remark}  
 
%\subsection{Fredholm theory}  
 
Let us fix $p\ge 2$. The linearization of the 
equations~(\ref{eq:Floer1par}-\ref{eq:Floer2par}) gives rise to the  
operator  
$$ 
D_{(u,\lambda)} : W^{1,p}(u^*T\widehat W) \oplus 
W^{1,p}(\lambda^*T\Lambda) \to  
L^p(u^*T\widehat W) \oplus L^p(\lambda^*T\Lambda), 
$$ 
$$ 
D_{(u,\lambda)} (\zeta,\ell) :=  
\left(\begin{array}{c}  
D_u\zeta + (D_\lambda J\cdot \ell)(\p_\theta u - X_{H_\lambda}(u)) - 
J_\lambda (D_\lambda X_{H_\lambda}\cdot \ell) \\ 
\nabla_s \ell - \nabla_\ell \int_{S^1} \vec \nabla_\lambda H 
(\theta,u,\lambda) d\theta  
- \int_{S^1} \nabla_\zeta \vec \nabla_\lambda H(\theta,u,\lambda) d\theta 
\end{array}\right), 
$$ 
where 
$$ 
D_u : W^{1,p}(u^*T\widehat W) \to L^p(u^*T\widehat W) 
$$ 
is the usual Floer operator given by  
$$ 
D_u\zeta := \nabla_s \zeta + J_\lambda \nabla_\theta \zeta - 
J_\lambda \nabla_\zeta X_{H_\lambda} + \nabla_\zeta J_\lambda (\p_\theta u - 
X_{H_\lambda}). 
$$ 
 
The Hessian of $\cA$ at a critical point $p=(\gamma,\lambda)$ is given 
by the formula  
\begin{eqnarray} \label{eq:d2A} 
\lefteqn{d^2\cA(\gamma,\lambda)\big((\zeta,\ell),(\eta,k)\big)} \\ 
&=& \int_{S^1} \omega(\nabla_\theta\eta-\nabla_\eta 
X_{H_\lambda},\zeta) d\theta - \int_{S^1}\eta(\frac {\partial H} 
{\partial \lambda}\cdot \ell) d\theta \nonumber \\ 
&& - \ \int_{S^1} k(dH_\lambda\cdot \zeta) d\theta - \int_{S^1} \frac 
{\partial^2 H} {\partial \lambda^2}(\ell,k) d\theta \nonumber \\ 
&=& d^2\cA_{H_\lambda}(\gamma)(\zeta,\eta) - \int_{S^1}\eta(\frac {\partial H} 
{\partial \lambda}\cdot \ell) d\theta  
- \int_{S^1} k(dH_\lambda\cdot \zeta) d\theta  
- d^2 F_\gamma(\lambda)(\ell,k). \nonumber 
\end{eqnarray}  
 
We define the asymptotic operator at a critical point 
$(\gamma,\lambda)$ by 
$$ 
D_{(\gamma,\lambda)} : H^1(S^1,\gamma^*T\widehat W) \times T_\lambda 
\Lambda \to L^2(S^1,\gamma^*T\widehat W) \times T_\lambda 
\Lambda, 
$$ 
\begin{equation}  \label{eq:Dasy} 
D_{(\gamma,\lambda)}(\zeta,\ell) = \left(\begin{array}{c}  
J_\lambda(\nabla_\theta \zeta - \nabla_\zeta X_{H_\lambda} - 
(D_\lambda X_{H_\lambda})\cdot \ell) \\  
-\int_{S^1} \nabla_\zeta \frac {\partial H} {\partial \lambda} d\theta 
- \int_{S^1} \nabla_\ell \frac {\partial H} {\partial \lambda} d\theta 
\end{array}\right). 
\end{equation} 
Note that $D_{(\gamma,\lambda)}$ is obtained from $D_{(u,\lambda)}$ 
for $(u(s,\theta),\lambda(s))\equiv (\gamma(\theta),\lambda)$ and 
$(\zeta(s,\theta),\ell(s)) \equiv (\zeta(\theta),\ell)$.  
 
\begin{lemma} \label{lem:d2-asy} 
 The Hessian $d^2\cA(\gamma,\lambda)$ has trivial kernel if and only if 
 the asymptotic operator $D_{(\gamma,\lambda)}$ is injective.  
\end{lemma}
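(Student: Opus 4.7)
The plan is to establish the identity
$$
d^2\cA(\gamma,\lambda)\big((\zeta,\ell),(\eta,k)\big) \;=\; \big\langle D_{(\gamma,\lambda)}(\zeta,\ell),\,(\eta,k)\big\rangle_{J,g}
$$
for all smooth test vectors $(\zeta,\ell),(\eta,k)$ at the critical point $(\gamma,\lambda)$. Once this identity is in hand, the lemma follows by pure linear algebra. If $D_{(\gamma,\lambda)}(\zeta,\ell)=0$, the right-hand side vanishes for every $(\eta,k)$, so $(\zeta,\ell)$ lies in the kernel of $d^2\cA(\gamma,\lambda)$. Conversely, if $(\zeta,\ell)$ is in the kernel of $d^2\cA(\gamma,\lambda)$, then $\langle D_{(\gamma,\lambda)}(\zeta,\ell),(\eta,k)\rangle_{J,g}=0$ for every smooth $(\eta,k)$, and non-degeneracy of the $L^2\oplus g$-pairing forces $D_{(\gamma,\lambda)}(\zeta,\ell)=0$.

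Conceptually, the identity is the standard fact that the Hessian of a smooth function at a critical point of a Riemannian manifold is the metric dual of the covariant linearization of its gradient vector field. As noted just before the statement of the lemma, the asymptotic operator $D_{(\gamma,\lambda)}$ is obtained from the full linearized Floer operator $D_{(u,\lambda)}$ by restricting to $s$-independent data at the constant solution $(u(s,\theta),\lambda(s))\equiv(\gamma(\theta),\lambda)$; equivalently, $D_{(\gamma,\lambda)}$ is the $\langle\cdot,\cdot\rangle_{J,g}$-covariant linearization of the gradient vector field $\vec\nabla\cA$ at the critical point $(\gamma,\lambda)$, and the identity follows.

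For a self-contained verification one expands the right-hand side using the definition of $\langle\cdot,\cdot\rangle_{J,g}$. The $\widehat W$-component, after using the compatibility $\omega(J_\lambda^\theta\cdot,J_\lambda^\theta\cdot)=\omega(\cdot,\cdot)$, contributes
$$\int_{S^1}\omega\big(\nabla_\theta\zeta-\nabla_\zeta X_{H_\lambda}-(D_\lambda X_{H_\lambda})\cdot\ell,\,\eta\big)\,d\theta.$$
Its leading piece recovers the first summand of \eqref{eq:d2A} via integration by parts in $\theta$ and the symmetry of the loop-wise Hessian $d^2\cA_{H_\lambda}(\gamma)$, while the cross term involving $D_\lambda X_{H_\lambda}$ matches the mixed integral $-\int_{S^1}\eta\big(\tfrac{\partial H}{\partial\lambda}\!\cdot\!\ell\big)\,d\theta$ by differentiating the defining relation $\iota_{X_{H_\lambda}}\omega=dH_\lambda$ in the $\lambda$-direction. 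The $\Lambda$-component reproduces the remaining mixed term and the Hessian of $F_\gamma$ once one invokes the $g$-duality between $\vec\nabla_\lambda H$ and $\partial H/\partial\lambda$, together with the symmetry of the second partial derivative $\partial^2 H/\partial\lambda^2$. The only point requiring genuine care is the bookkeeping of covariant derivatives on $\widehat W$ and on $\Lambda$, but no substantial difficulty is expected.
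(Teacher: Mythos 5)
Your proof is correct and follows essentially the same route as the paper: the paper's entire argument is the single identity $d^2\cA(\gamma,\lambda)\big((\zeta,\ell),(\eta,k)\big)=\langle D_{(\gamma,\lambda)}(\zeta,\ell),(\eta,k)\rangle$, from which the equivalence is immediate by nondegeneracy of the pairing. You merely add a (correct) sketch of how to verify that identity term by term, which the paper leaves to the reader.
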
  
 
\begin{proof} 
The conclusion of the Lemma follows readily from the identity  
$$ 
d^2\cA(\gamma,\lambda)\big((\zeta,\ell),(\eta,k)\big)=\langle 
D_{(\gamma,\lambda)}(\zeta,\ell),(\eta,k)\rangle.  
$$ 
\end{proof}  
 
We say that a critical point $(\gamma,\lambda)$ is {\bf nondegenerate} if 
the Hessian $d^2\cA(\gamma,\lambda)$ has trivial kernel. Since the 
operator $D_{(\gamma,\lambda)}$ is self-adjoint, this is equivalent to 
its surjectivity by Lemma~\ref{lem:d2-asy}.  
 
%\begin{remark} 
%  We note that nondegeneracy of a critical point $(\gamma,\lambda)$ 
%  does not imply that $\gamma$ is a nondegenerate orbit of 
%  $H_\lambda$, nor that $\lambda$ is a nondegenerate critical point of 
%  $F_\gamma$. This situation is already present in Morse theory, as 
%  the following example shows. We consider the Morse function 
%  $f:\R\times \R\to\R$, $(x,y)\mapsto xy$. Then $(x_0,y_0)=(0,0)$ is a 
%  nondegenerate critical point, but the restrictions of $f$ to 
%  $\R\times \{0\}$ and $\{0\}\times \R$ are constant, hence $x_0=0$ 
%  and $y_0=0$ are degenerate critical points.  
%\end{remark} 

An admissible Hamiltonian family $H$ is called {\bf nondegenerate} if $\cP(H)$ 
consists of nondegenerate elements.   
We denote the set of nondegenerate and admissible Hamiltonian families by 
$\cH_{\Lambda,\textrm{reg}}\subset \cH_\Lambda$.  
 
\begin{proposition} \label{prop:Hreg} 
 The set $\cH_{\Lambda,\textrm{reg}}$ is of the second Baire category in 
$\cH_\Lambda$. Moreover, if $H\in\cH_{\Lambda,\textrm{reg}}$ the set 
$\cP(H)$ is discrete.   
\end{proposition}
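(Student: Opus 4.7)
The plan is a standard Sard--Smale argument on a universal critical set. Define a smooth map
\[
\Phi: W^{1,p}(S^1,\widehat W)\times\Lambda\times\cH_\Lambda^k \longrightarrow L^p(\gamma^*T\widehat W)\oplus T^*\Lambda,
\]
where $\cH_\Lambda^k$ denotes a $C^k$-completion of $\cH_\Lambda$ for some large integer $k$, by
\[
\Phi(\gamma,\lambda,H)=\Bigl(\dot\gamma-X_{H_\lambda}(\gamma),\ \int_{S^1}\tfrac{\partial H}{\partial\lambda}(\theta,\gamma(\theta),\lambda)\,d\theta\Bigr).
\]
The universal critical set $\cU:=\{(\gamma,\lambda,H):(\gamma,\lambda)\in\cP(H)\}$ equals $\Phi^{-1}(0)$, and the restriction of $d\Phi$ to directions $(\zeta,\ell,0)$ coincides, up to multiplication by $J_\lambda$, with the asymptotic operator $D_{(\gamma,\lambda)}$ from~\eqref{eq:Dasy}, which is Fredholm of index zero and self-adjoint.

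The heart of the argument is to show that $d\Phi$ is surjective at every zero, so that $\cU$ is a Banach submanifold. Since the $(\zeta,\ell)$-variations alone cover the image of a Fredholm operator, it suffices to fill the finite-dimensional cokernel of $D_{(\gamma,\lambda)}$ by perturbations $h$. Assume that $(\eta,k)$ annihilates the image of $d\Phi$: the $(\zeta,\ell)$-component together with self-adjointness gives $(\eta,k)\in\ker D_{(\gamma,\lambda)}$, so by ODE regularity $\eta$ is smooth; the $h$-component yields
\[
\int_{S^1}dh_\lambda|_{\gamma(\theta)}\bigl(J_\lambda(\theta)\eta(\theta)\bigr)\,d\theta+k\cdot\int_{S^1}\tfrac{\partial h}{\partial\lambda}(\theta,\gamma(\theta),\lambda)\,d\theta=0
\]
for every admissible $h$. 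If $\gamma$ is nonconstant, it has an injective point in the sense of Floer--Hofer--Salamon, and localizing $h$ near such a point in $(\theta,x)$ and independent of $\lambda$ forces $\eta\equiv 0$; prescribing afterwards an arbitrary $\lambda$-derivative of $h$ along $\gamma$ forces $k=0$. Constant loops reduce to a finite-dimensional genericity condition on the Hessian of $H_\lambda$ at the fixed point together with the mixed $\lambda$-derivative, handled by elementary $h$-perturbations.

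Once $\cU$ is a Banach manifold, the projection $\pi:\cU\to\cH_\Lambda^k$ is Fredholm of index zero (equal to the index of $D_{(\gamma,\lambda)}$). Sard--Smale yields a residual set of regular values $\cH_{\Lambda,\mathrm{reg}}^k\subset\cH_\Lambda^k$; for $H$ therein, $D_{(\gamma,\lambda)}$ is surjective, hence bijective, at each $(\gamma,\lambda)\in\cP(H)$, so the critical point is nondegenerate by Lemma~\ref{lem:d2-asy}. Passage from $C^k$ to $C^\infty$ is achieved via Taubes's trick: for a countable exhaustion by compact subsets of critical loops, the intersection of the corresponding open-dense subsets of $\cH_\Lambda^k$ meets $\cH_\Lambda$ in a residual set $\cH_{\Lambda,\mathrm{reg}}\subset\cH_\Lambda$. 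Discreteness of $\cP(H)$ for $H\in\cH_{\Lambda,\mathrm{reg}}$ follows at once from the implicit function theorem, since bijectivity of $D_{(\gamma,\lambda)}$ implies that near $(\gamma,\lambda)$ the zero set of $\Phi(\cdot,\cdot,H)$ is a zero-dimensional manifold.

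The main obstacle is the transversality step---specifically, producing enough $h$-perturbations to fill the cokernel. This rests on the existence of injective points for nonconstant loops, on ODE regularity of elements of $\ker D_{(\gamma,\lambda)}$, and on a direct finite-dimensional argument for constant loops. These ingredients are familiar from standard Floer theory, but must be adapted to the enlarged critical-point condition involving the parameter $\lambda$.
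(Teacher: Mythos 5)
Your overall strategy---universal zero set of the critical-point equation, surjectivity of the linearization, Sard--Smale for the projection, Taubes's trick to pass from $C^k$ to $C^\infty$, and the implicit function theorem for discreteness---is exactly the paper's. The one place where you diverge is the surjectivity step, and there your route is both heavier and slightly flawed. The paper does not argue by duality with the cokernel at all: since the perturbation $h=h(\theta,x,\lambda)$ is allowed to depend on $\theta$, one can \emph{directly} prescribe $X_h=-\eta$ along the graph of $\gamma$ in $S^1\times\widehat W$ (hitting $(\eta,0)$) and take $h$ constant in $(\theta,x)$ near $\im(\gamma)$ with $\vec\nabla_\lambda h=k$ (hitting $(0,k)$); this needs no injectivity of $\gamma$ and no separate treatment of constant loops. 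Your appeal to ``an injective point in the sense of Floer--Hofer--Salamon'' for every nonconstant $1$-periodic orbit is in fact false as stated---a $k$-fold cover of a $1/k$-periodic orbit of an autonomous (or $1/k$-periodic) Hamiltonian is nonconstant with no injective points---but the error is harmless precisely because you also localize in $\theta$: once you localize in $(\theta,x)$ jointly, injectivity of $\gamma$ is irrelevant and constant loops require no special finite-dimensional argument either. One more detail you should make explicit for Taubes's trick to run: openness of the intermediate sets. The paper obtains it from an Arzel\`a--Ascoli compactness argument after truncating to $\{t\le t_0\}$ (this truncation is how the noncompactness of $\widehat W$ is handled, playing the role of your ``countable exhaustion''); you assert open-denseness without proving the open half.
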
  
 
\begin{proof} 
Given an integer $r\ge 2$, we denote by $\cH^r_\Lambda$ the set of functions 
$H:S^1\times \widehat W\times\Lambda\to\R$ of class $C^r$ which satisfy the 
defining conditions for an admissible Hamiltonian family.
This is a Banach manifold with respect to the $C^r$-norm. As a matter 
of fact, it is an open subset of the Banach space 
of $C^r$-functions 
$h:S^1\times \widehat W\times \Lambda\to\R$ which, outside a compact 
set, have the form $\beta e^t +\beta'(\lambda)$ with 
$\beta\in\R$ and $\beta':\Lambda\to\R$ of class $C^r$. As such, the tangent space  
$T_H\cH^r_\Lambda$ is identified with this Banach space.  
We denote by $\cH^r_{\Lambda,\textrm{reg}}\subset \cH^r_\Lambda$ the 
set of Hamiltonians $H$ such that $\cP(H)$ consists of nondegenerate 
elements as defined above. For $t_0\ge 0$, we denote $\{t\le 
t_0\}:=W\cup M\times [0,t_0]$ and  
$\cH^r_{\Lambda,\textrm{reg},t_0}\subset\cH^r_\Lambda$ the set of 
Hamiltonians $H$ such that the elements $(\gamma,\lambda)\in\cP(H)$ with 
$\mathrm{im}(\gamma)\subset \{t\le t_0\}$ are nondegenerate. Then  
$$ 
\cH^r_{\Lambda,\textrm{reg}}=\bigcap_{t_0\ge 0} \cH^r_{\Lambda,\textrm{reg},t_0}. 
$$ 
 
Our first claim is that each $\cH^r_{\Lambda,\textrm{reg},t_0}$ is open and 
dense in $\cH^r_\Lambda$, so that $\cH^r_{\Lambda,\textrm{reg}}$ is of 
the second Baire category. To prove that 
$\cH^r_{\Lambda,\textrm{reg},t_0}$ is dense, we consider the  
Banach bundle $\cE\to \cH^r_\Lambda \times C^r(S^1,\{t\le t_0\})\times 
\Lambda$ whose fiber at $(H,\gamma,\lambda)$ is 
$\cE_{(H,\gamma,\lambda)}:=C^{r-1}(S^1,\gamma^*T\widehat W)\times 
T_\lambda\Lambda$, and the section $f$ given by  
$$ 
f(H,\gamma,\lambda):= (\dot \gamma - X_H\circ \gamma,-\int_{S^1} \vec 
\nabla _\lambda H).  
$$ 
The main step is to prove that $\cP:=f^{-1}(0)$ is a Banach 
submanifold of $\cH^r_\Lambda \times C^r(S^1,\{t\le t_0\})\times 
\Lambda$. Indeed, the vertical differential of $f$ at a point 
$(H,\gamma,\lambda)\in\cP$ is given by  
$$ 
df(H,\gamma,\lambda)\cdot(h,\zeta,\ell)= 
\left(\begin{array}{c} 
\nabla_\theta\zeta-\nabla_\zeta X_H - (D_\lambda X_H)\cdot\ell-X_h \\ 
-\int_{S^1}\nabla_\zeta \vec\nabla_\lambda H - \int_{S^1} \nabla_\ell 
\vec \nabla_\lambda H - \int_{S^1}\vec \nabla _\lambda h 
\end{array}\right), 
$$ 
where $h\in T_H\cH^r_\Lambda$ and $X_h$ is its Hamiltonian vector 
field. That $df(H,\gamma,\lambda)$ is surjective is seen as 
follows. Given $k\in T_\lambda\Lambda$, we have 
$(0,k)=df(H,\gamma,\lambda)\cdot(h,0,0)$, with 
$h(\cdot,\cdot,\lambda)=\mathrm{ct.}$ in some neighbourhood of 
$\mathrm{im}(\gamma)$ and $\vec\nabla _\lambda h=k$. Given $\eta\in 
C^{r-1}(S^1,\gamma^*T\widehat W)$, we have 
$(\eta,0)=df(H,\gamma,\lambda)\cdot(h,0,0)$, with $h$ independent of 
$\lambda$ and such that $X_h=-\eta$ along $\gamma$. This proves that 
$\cP$ is a Banach submanifold as desired. Since 
$\cH^r_{\Lambda,\textrm{reg},t_0}$ coincides with the set of regular 
values of the natural projection $\cP\to \cH^r_\Lambda$, we conclude 
by the Sard-Smale theorem that it is dense.  
 
To prove that $\cH^r_{\Lambda,\textrm{reg},t_0}$ is open in 
$\cH^r_\Lambda$, we prove that its complement is closed. Let therefore 
$H^\nu\in \cH^r_\Lambda \setminus \cH^r_{\Lambda,\textrm{reg},t_0}$ be a 
sequence such that $H^\nu\to H\in\cH^r_\Lambda$ as $\nu\to\infty$. Let 
$(\gamma^\nu,\lambda^\nu)\in\cP(H^\nu)$ be such that 
$D_{(\gamma^\nu,\lambda^\nu)}$ is not surjective and 
$\mathrm{im}(\gamma^\nu)\subset \{t\le t_0\}$. Since $\Lambda$ is 
compact, it follows from the Arzel\`a-Ascoli theorem that, up to  
a subsequence, $(\gamma^\nu,\lambda^\nu)$ converges to some 
$(\gamma,\lambda)\in\cP(H)$, with $\mathrm{im}(\gamma)\subset \{t\le 
t_0\}$. Since the sequence $D_{(\gamma^\nu,\lambda^\nu)}$  
converges to $D_{(\gamma,\lambda)}$, the latter cannot be surjective, 
so that $H\in \cH^r_\Lambda \setminus 
\cH^r_{\Lambda,\textrm{reg},t_0}$ as desired.  
 
Let $\cH_{\Lambda,\textrm{reg},t_0}:=\cap_{r\ge 2} 
\cH^r_{\Lambda,\textrm{reg},t_0}\subset \cH_\Lambda$. The same 
argument as above shows that $\cH_{\Lambda,\textrm{reg},t_0}$ is 
open. We claim that it is also dense, so that 
$\cH_{\Lambda,\textrm{reg}}=\cap_{t_0\ge 0} 
\cH_{\Lambda,\textrm{reg},t_0}$ is of the second Baire category in 
$\cH_\Lambda$. To see this, let $H\in \cH_\Lambda$ be fixed and 
consider a sequence $H^r\in \cH^r_{\Lambda,\textrm{reg},t_0}$ such 
that $H^r\to H$ in any fixed norm $C^{r_0}$, i.e. in the 
$C^\infty$-topology as $r\to\infty$. Since 
$\cH^r_{\Lambda,\textrm{reg},t_0}$ is open in $\cH^r_\Lambda$ and 
$\cH_\Lambda$ is dense in $\cH^r_\Lambda$, there exists $\widetilde 
H^r\in \cH^r_{\Lambda,\textrm{reg},t_0}\cap 
\cH_\Lambda=\cH_{\Lambda,\textrm{reg},t_0}$ such that 
$\|H^r-\widetilde H^r\|_{C^r}\le \eps_r$, with $\eps_r\to 0$ as $r\to 
\infty$. Then $\widetilde H^r\to H$ in the $C^\infty$-topology, which 
shows that $\cH_{\Lambda,\textrm{reg},t_0}\subset \cH_\Lambda$ is 
dense.  
 
We are left to prove that, given $H\in\cH_{\Lambda,\textrm{reg}}$, the 
elements of $\cP(H)$ are isolated. This follows from the nondegeneracy 
of the Hessian $d^2\cA$, as can be easily seen using a Taylor 
expansion at first order for $d\cA$.   
\end{proof}  
 
Let $I\subset \R$ be any interval. We denote 
\begin{eqnarray*} 
\cW^{1,p}(I) & := & W^{1,p}(I\times S^1,u^*T\widehat W) \oplus 
W^{1,p}(I,\lambda^*T\Lambda), \\  
\cL^p(I) & := & L^p(I\times S^1,u^*T\widehat W) \oplus 
L^p(I,\lambda^*T\Lambda), 
\end{eqnarray*} 
and we abbreviate $\cW^{1,p}:=\cW^{1,p}(\R)$, $\cL^p:=\cL^p(\R)$.  
 
Given $(\og,\olambda),(\ug,\ulambda)\in\cP(H)$ and $(u,\lambda)\in 
\widehat \cM((\og,\olambda),(\ug,\ulambda); H,J,g)$, we denote 
$D:=D_{(u,\lambda)}$. We can choose a unitary trivialization of 
$u^*T\widehat W$  
and a trivialization of $\lambda^*T\Lambda$ in which $D$ has 
the form   
\begin{equation} \label{eq:Dtriv} 
D\left(\begin{array}{c} 
\zeta \\ \ell  
\end{array}\right) 
:= 
\Bigg[\left(\begin{array}{cc} 
\partial_s +J_0\partial_\theta & 0 \\ 0 & \frac d {ds}  
\end{array}\right)  
+ N 
\Bigg] 
\left(\begin{array}{c} 
\zeta \\ \ell  
\end{array}\right),  
\end{equation} 
with $N:\R\times S^1\to \mathrm{Mat}_{2n+m}(\R)$ pointwise 
bounded and $\lim_{s\to\pm\infty}N(s,\theta)$  
symmetric.

\begin{theorem} \label{thm:Fredholm}  
Assume $(\og,\olambda),(\ug,\ulambda)\in\cP(H)$ are 
  nondegenerate. For any $(u,\lambda)\in 
\widehat \cM((\og,\olambda),(\ug,\ulambda); H,J,g)$ the operator 
$$ 
D:= D_{(u,\lambda)} : \cW^{1,p}\to \cL^p 
$$  
is Fredholm for $1<p<\infty$.    
\end{theorem}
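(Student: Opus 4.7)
The plan is to recognize $D$ as an operator of the form $\partial_s + A(s)$ acting on sections over the cylinder $\R\times S^1$, and to appeal to the standard Fredholm theory for such operators with self-adjoint invertible asymptotic limits, as in~\cite{FHS}. In the trivialization~\eqref{eq:Dtriv}, one sets $A(s)\colon W\to H_0$, where $H_0 := L^2(S^1,\R^{2n})\oplus\R^m$ and $W := H^1(S^1,\R^{2n})\oplus\R^m$, by
$$A(s)\begin{pmatrix}\zeta\\ \ell\end{pmatrix} := \begin{pmatrix} J_0\partial_\theta\zeta + N_{11}(s,\cdot)\zeta + N_{12}(s,\cdot)\ell \\ \int_{S^1}N_{21}(s,\theta)\zeta(\theta)\,d\theta + N_{22}(s)\ell\end{pmatrix}.$$
The inclusion $W\hookrightarrow H_0$ is compact by Rellich. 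Because $\lim_{s\to\pm\infty}N(s,\theta)$ is symmetric, the limits $A_\pm := \lim_{s\to\pm\infty}A(s)$ are, up to the trivialization, precisely the asymptotic operators $D_{(\og,\olambda)}$ and $D_{(\ug,\ulambda)}$ of~\eqref{eq:Dasy}; these are self-adjoint on $H_0$ with dense domain $W$, since they arise as Hessians of $\cA$. The nondegeneracy hypothesis together with Lemma~\ref{lem:d2-asy} yields $\ker A_\pm = 0$, and self-adjointness upgrades this to the fact that $A_\pm\colon W\to H_0$ is an isomorphism with spectrum bounded away from zero.

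The core analytic step would be the semi-Fredholm estimate
$$\|\xi\|_{\cW^{1,p}} \le C\bigl(\|D\xi\|_{\cL^p} + \|\xi\|_{\cL^p([-T,T])}\bigr)$$
for some $T>0$ and a constant $C>0$. On the interior slab $|s|\le T$, local elliptic estimates for the Cauchy-Riemann operator in the first component and trivial ODE estimates for $d/ds$ in the second, combined with H\"older's inequality to absorb the bounded multiplication terms in $N$ and the non-local coupling $\zeta\mapsto\int_{S^1}N_{21}\zeta\,d\theta$, give the required control. On each cylindrical end $|s|\ge T$, we have $\|A(s)-A_\pm\|_{W\to H_0}$ arbitrarily small for $T$ large, so $D$ is a small perturbation of the autonomous operator $\partial_s + A_\pm$. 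The latter is an isomorphism $\cW^{1,p}(\R)\to\cL^p(\R)$ by standard spectral/Fourier arguments based on the invertibility of $A_\pm$. Cutting off with a partition of unity and patching yields the global estimate.

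From this estimate, the compactness of $\cW^{1,p}([-T,T])\hookrightarrow\cL^p([-T,T])$ implies that $\ker D$ is finite-dimensional and $\range D$ is closed. The formal adjoint $D^*$ has the same structural form --- an operator $-\partial_s + A(s)^*$ whose asymptotic operators are again the $A_\pm$, by self-adjointness --- and the same argument gives finite-dimensional cokernel, proving that $D$ is Fredholm for $1<p<\infty$. The main novelty compared with the classical Floer setup is the non-local integral coupling $\zeta\mapsto\int_{S^1}N_{21}\zeta\,d\theta$ in the second component; it defines a bounded operator $L^p(\R\times S^1)\to L^p(\R)$ by H\"older, is relatively compact with respect to the diagonal part, and disrupts neither the spectral analysis of the model operators $\partial_s + A_\pm$ nor the interior elliptic regularity. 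The remaining obstacle --- and really the only nontrivial point --- is thus a careful verification that the standard Floer-type Fredholm machinery adapts verbatim to this mixed PDE-ODE setup with a non-local coupling.
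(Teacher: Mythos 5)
Your proposal is correct and follows essentially the same route as the paper: a semi-Fredholm estimate obtained by combining interior elliptic/ODE estimates (which absorb the bounded multiplication terms and the non-local coupling) with the invertibility of the constant asymptotic operators on the ends, then compactness of the restriction operator for finite kernel and closed range, and the formal adjoint $D^*$ plus elliptic regularity for the cokernel. The one step you dispatch as ``standard spectral/Fourier arguments'' --- that $\partial_s+A_\pm$ is an isomorphism $\cW^{1,p}\to\cL^p$ for $p\neq 2$ --- is exactly where the paper spends most of its effort (Lemma~\ref{lem:constant}, Steps~2--5: the semigroup kernel $K(s)$ with exponential decay, Young's convolution inequality to pass from $p=2$ to $p\ge 2$, and a duality argument for $1<p<2$), so you should be aware that this is a nontrivial lemma rather than a citation.
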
 
 
\begin{remark} 
The nonlinear theory only requires the case $p>2$, so that our 
$W^{1,p}$-maps to $\widehat W\times \Lambda$ are continuous.  
\end{remark} 
 
\begin{remark}[Structure of the proof]  
There are two main ingredients in the proof of 
Theorem~\ref{thm:Fredholm}. The first is that $D$ is an elliptic 
operator, so that it satisfies the estimates in 
Lemma~\ref{lem:elliptic} below. The second ingredient is that the 
constant operators at the asymptotes are bijective, due to our 
standing nondegeneracy assumption. This is proved in 
Lemma~\ref{lem:constant} below, and allows to refine the elliptic 
estimate by introducing a compact operator 
(Lemma~\ref{lem:Fredholm}).  
\end{remark}  
 
\begin{proof} 
By Lemma~\ref{lem:Fredholm} below, the operator  
$D$ satisfies an estimate of the form   
\begin{equation} \label{eq:Fredholm} 
\|x\|_{\cW^{1,p}}\le C\big(\|Dx\|_{\cL^p} + \|Kx\|_{\cL^p([-T,T])}\big),  
\end{equation} 
where $K:\cW^{1,p}\to \cL^p([-T,T])$ is the restriction operator and $T>0$ is 
large enough. The embedding   
$W^{1,p}\hookrightarrow C^0$ with $p>2$ is compact if the domain is 
bounded and has dimension at most $2$, so that $K$ is a compact 
operator. By~\cite[Lemma~A.1.1]{McDS} it follows that $D$ has a finite 
dimensional kernel and a closed image.  
 
To show that $D$ has a finite dimensional cokernel, we introduce 
its formal adjoint $D^*:\cW^{1,q}\to \cL^q$, $1/p+1/q=1$ defined by  
\begin{equation} \label{eq:adjoint} 
D^*\left(\begin{array}{c} 
\zeta \\ \ell  
\end{array}\right) 
:= 
\Bigg[\left(\begin{array}{cc} 
-\partial_s +J_0\partial_\theta & 0 \\ 0 & -\frac d {ds}  
\end{array}\right)  
+ N^T 
\Bigg] 
\left(\begin{array}{c} 
\zeta \\ \ell  
\end{array}\right),  
\end{equation}  
where $N^T$ denotes the transpose of $N$. Lemma~\ref{lem:Fredholm} 
applies also to the operator $D^*$, which therefore satisfies an 
estimate of the form   
\begin{equation} \label{eq:Fredholm*} 
\|x\|_{\cW^{1,q}}\le C\big(\|D^*x\|_{\cL^q} + \|Kx\|_{\cL^q([-T,T])}\big),  
\end{equation} 
with $K:\cW^{1,q}\to \cL^q([-T,T])$ the restriction operator and $T>0$ is large 
enough. The embedding $W^{1,q}\hookrightarrow L^q$ is compact for a 
bounded domain of dimension at most $2$, so that $K$ is compact and we 
infer that $D^*$ has a finite dimensional kernel.  
 
Given an element $y\in 
\cL^q$ which annihilates the image of $D$, we have $D^*y=0$. On the 
other hand, by 
elliptic regularity for $D^*$, we have $y\in \cW^{1,q}$. The cokernel of $D$ 
therefore coincides with the kernel of $D^*$ and is finite 
dimensional. This proves the Fredholm property for $D$.   
\end{proof} 
 
\begin{lemma} \label{lem:elliptic} 
 Under the hypotheses of Theorem~\ref{thm:Fredholm}, and for $p>1$, 
 there exists a constant $C>0$ such that, for any $k\in \Z$ and $x\in 
 \cW^{1,p}([k-1,k+2])$, we have  
\begin{equation} \label{eq:elliptic-loc} 
\|x\|_{\cW^{1,p}([k,k+1])}\le C\big(\|Dx\|_{\cL^p([k-1,k+2])} + 
\|x\|_{\cL^p([k-1,k+2])}\big).   
\end{equation} 
Similarly, there exists a constant $C_1>0$ such that, for $x\in 
\cW^{1,p}$, we have  
\begin{equation} \label{eq:elliptic} 
\|x\|_{\cW^{1,p}}\le C_1\big(\|Dx\|_{\cL^p} + 
\|x\|_{\cL^p}\big).   
\end{equation} 
\end{lemma}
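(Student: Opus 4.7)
\emph{Plan.} The operator $D$ has the block form \eqref{eq:Dtriv}: a principal part
$$
D_0 := \begin{pmatrix} \partial_s + J_0 \partial_\theta & 0 \\ 0 & \tfrac{d}{ds} \end{pmatrix}
$$
acting diagonally, plus a bounded zeroth-order term $N \in L^\infty(\R\times S^1,\mathrm{Mat}_{2n+m}(\R))$. Both blocks of $D_0$ are first-order elliptic operators with constant coefficients, so the plan is to obtain interior estimates for each block separately, absorb $N$ into the $L^p$ term on the right, and then patch the local estimates into the global one.

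First I would establish \eqref{eq:elliptic-loc} for $D_0$ on each strip $[k-1,k+2]\times S^1$. For the $\zeta$-block, the operator $\partial_s + J_0 \partial_\theta$ is, up to a complex-linear change of coordinates, the standard Cauchy--Riemann operator on a half-infinite cylinder; the classical Calder\'on--Zygmund interior estimate (see e.g.\ McDuff--Salamon, Appendix B.4) gives
$$
\|\zeta\|_{W^{1,p}([k,k+1]\times S^1)} \le C\bigl(\|(\partial_s + J_0\partial_\theta)\zeta\|_{L^p([k-1,k+2]\times S^1)} + \|\zeta\|_{L^p([k-1,k+2]\times S^1)}\bigr),
$$
for $1<p<\infty$, with $C$ independent of $k$ by translation invariance. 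For the $\ell$-block, the estimate
$$
\|\ell\|_{W^{1,p}([k,k+1])} \le \|\dot\ell\|_{L^p([k-1,k+2])} + \|\ell\|_{L^p([k-1,k+2])}
$$
is immediate from the definition of the $W^{1,p}$ norm. Adding the two gives \eqref{eq:elliptic-loc} with $D_0$ in place of $D$.

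Next I would account for the zeroth-order term $N$. Since $\|N\|_{L^\infty}$ is finite (by the hypothesis that $N$ is pointwise bounded on $\R\times S^1$), one has
$$
\|D_0 x\|_{\cL^p([k-1,k+2])} \le \|D x\|_{\cL^p([k-1,k+2])} + \|N\|_{L^\infty}\, \|x\|_{\cL^p([k-1,k+2])},
$$
so substituting into the estimate for $D_0$ and enlarging $C$ yields \eqref{eq:elliptic-loc}.

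Finally, I would globalize by summing: since
$\|x\|_{\cW^{1,p}}^p = \sum_{k\in\Z} \|x\|_{\cW^{1,p}([k,k+1])}^p$
and each point of $\R$ lies in at most three of the overlapping windows $[k-1,k+2]$, raising \eqref{eq:elliptic-loc} to the $p$-th power and summing over $k$ gives
$$
\|x\|_{\cW^{1,p}}^p \le 3\, C^p \bigl(\|Dx\|_{\cL^p}^p + \|x\|_{\cL^p}^p\bigr),
$$
which is \eqref{eq:elliptic} after taking $p$-th roots. No genuine obstacle is expected: the only nontrivial input is the Calder\'on--Zygmund estimate for the Cauchy--Riemann operator, which is standard, and the block structure of $D$ reduces everything else to bookkeeping.
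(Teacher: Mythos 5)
Your proposal is correct and follows essentially the same route as the paper: decompose $D$ into its diagonal constant-coefficient principal part plus the bounded zeroth-order term $N$, invoke the standard Calder\'on--Zygmund interior estimate for $\partial_s+J_0\partial_\theta$ (McDuff--Salamon, Appendix~B.4) together with the trivial estimate for the $\tfrac{d}{ds}$-block, absorb $N$ into the $\cL^p$ term, and sum the local estimates over $k$. The only differences are cosmetic (notation for the principal part, and your explicit overlap-factor bookkeeping in the summation step).
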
 
 
\begin{proof} 
Let us denote  
$$ 
D_1:=\left(\begin{array}{cc} 
-\partial_s +J_0\partial_\theta & 0 \\ 0 & -\frac d {ds}  
\end{array}\right) 
$$  
and let $D_0$ be the operator given by multiplication 
with $N$, so that $D=D_1+D_0$. The crucial point is that $D_1$ is 
diagonal and each of its components satisfies an estimate of 
the form~\eqref{eq:elliptic-loc}. For the component $\partial _s + 
J_0\partial _\theta$, this follows immediately 
from~\cite[Lemma~B.4.6.(ii)]{McDS} (with the notations therein, one 
has to take $q=r$, $p=\infty$, $\Omega'=]k,k+1[\times S^1$, 
$\Omega=]k-1,k+2[\times S^1$). For the 
component $\frac d {ds}$, the estimate follows from the fact that the 
right hand side of~\eqref{eq:elliptic-loc} defines a norm which is 
equivalent to the Sobolev norm on $W^{1,p}([k-1,k+2],T_\lambda\Lambda)$.  
 
We have $\|D_0x\|_{\cL^p([k-1,k+2])}\le 
C_1\|x\|_{\cL^p([k-1,k+2])}$ since $N$ is bounded pointwise, so that  
\begin{eqnarray*} 
\|x\|_{\cW^{1,p}([k,k+1])} \hspace{-2mm} & \le & 
\hspace{-2mm}C\big(\|D_1x\|_{\cL^p([k-1,k+2])} + 
\|x\|_{\cL^p([k-1,k+2])}\big) \\  
& \le & \hspace{-2mm}C\big(\|Dx\|_{\cL^p([k-1,k+2])} + \|D_0x\|_{\cL^p([k-1,k+2])} + 
\|x\|_{\cL^p([k-1,k+2])}\big) \\  
& \le & \hspace{-2mm}C_2\big(\|Dx\|_{\cL^p([k-1,k+2])} + \|x\|_{\cL^p([k-1,k+2])}\big). 
\end{eqnarray*} 
The estimate~\eqref{eq:elliptic} follows  
from~\eqref{eq:elliptic-loc} by summing over $k\in\Z$.  
\end{proof}

\begin{lemma} \label{lem:constant} 
Let $(\gamma,\lambda_0)\in\cP(H)$ and $(u,\lambda)$ be the constant 
trajectory at $(\gamma,\lambda_0)$, defined by 
$u(s,\theta):=\gamma(\theta)$ and $\lambda(s)=\lambda_0$. If 
$(\gamma,\lambda_0)$ is nondegenerate, then the operator 
$D:=D_{(u,\lambda)}:\cW^{1,p}\to \cL^p$ is bijective for $p>1$.  
\end{lemma}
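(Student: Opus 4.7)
Since $(u,\lambda)$ is a constant trajectory, the operator $D$ has no $s$-dependence and takes the form $D = \partial_s + A$, where $A := D_{(\gamma,\lambda_0)}$ is the asymptotic operator, viewed as an unbounded operator on the Hilbert space $H := L^2(S^1,\gamma^*T\widehat W)\oplus T_{\lambda_0}\Lambda$. The problem therefore reduces to inverting $\partial_s + A$ between $\cW^{1,p}$ and $\cL^p$. Two structural facts about $A$ drive the argument: (i) $A$ is self-adjoint on $H$, which follows from the symmetry of the Hessian $d^2\cA(\gamma,\lambda_0)$ via the identity used in the proof of Lemma~\ref{lem:d2-asy}; and (ii) $A$ has compact resolvent, since its natural domain $H^1(S^1,\gamma^*T\widehat W)\oplus T_{\lambda_0}\Lambda$ embeds compactly into $H$ (Rellich on $S^1$, plus finite-dimensionality of $\Lambda$). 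The spectrum $\sigma(A)\subset\R$ is therefore discrete; the nondegeneracy hypothesis together with Lemma~\ref{lem:d2-asy} makes $A$ injective, so we obtain a spectral gap $\delta := \dist(0,\sigma(A)) > 0$.

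For $p = 2$ the bijectivity is immediate via Fourier transform in $s$: $D$ becomes multiplication by $i\xi + A$, uniformly invertible on $H$ with operator norm $\le 1/\delta$, as are $(i\xi)(i\xi+A)^{-1}$ and $A(i\xi+A)^{-1}$. This yields bijectivity $\cW^{1,2}\to\cL^2$ directly.

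For general $1 < p < \infty$ I would construct an explicit inverse. Let $P_+$ and $P_-$ be the spectral projections of $A$ onto the positive and negative parts of $\sigma(A)$; since $0\notin\sigma(A)$, one has $P_+ + P_- = \id$. Define
$$
Gy(s) \;:=\; \int_{-\infty}^{s} e^{-(s-t)A}\, P_+\, y(t)\, dt \;-\; \int_s^{+\infty} e^{-(s-t)A}\, P_-\, y(t)\, dt.
$$
In the first integrand, $s-t \ge 0$ and $A|_{P_+H}\ge \delta$, so $\|e^{-(s-t)A}P_+\|_{H\to H} \le e^{-\delta(s-t)}$; symmetrically, in the second integrand $s-t\le 0$ and $A|_{P_-H}\le -\delta$, giving $\|e^{-(s-t)A}P_-\|_{H\to H} \le e^{-\delta(t-s)}$. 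Young's inequality with the $L^1$ kernel $e^{-\delta|u|}$ then shows $G: \cL^p\to\cL^p$ is bounded. A direct distributional computation yields $DG = \id$, and applying the elliptic estimate~\eqref{eq:elliptic} of Lemma~\ref{lem:elliptic} locally to $x := Gy\in\cL^p$ (whose image $Dx = y\in\cL^p$ is also controlled) upgrades $Gy$ to $\cW^{1,p}$; hence $D$ is surjective.

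For injectivity at general $p$, if $x\in\cW^{1,p}$ satisfies $Dx=0$ then $x$ is smooth by elliptic regularity, and expanding in an orthonormal eigenbasis $\{\phi_n\}$ of $A$ with eigenvalues $\mu_n$ (where $|\mu_n|\ge\delta$), the coefficients $x_n(s) := \langle x(s,\cdot),\phi_n\rangle_H$ satisfy $\dot x_n + \mu_n x_n = 0$, so $x_n(s) = c_n e^{-\mu_n s}$; for this to lie in $L^p(\R)$ one must have $c_n = 0$ for every $n$, hence $x\equiv 0$. The main technical hurdle is the $p\neq 2$ inversion: Plancherel handles $p = 2$ instantly, but for general $p$ one must control the semigroup $e^{-tA}P_\pm$ uniformly via the spectral-gap bound, then combine Young's inequality with the elliptic bootstrap from Lemma~\ref{lem:elliptic}.
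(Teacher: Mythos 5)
Your construction of the inverse is exactly the paper's: the kernel $K(s-t)=e^{-(s-t)A}P_+$ for $s\ge t$ and $-e^{-(s-t)A}P_-$ for $s<t$ is precisely the operator $Q$ in Step~1 of the paper's proof, and your Fourier-transform treatment of $p=2$ is a harmless variant of it. The gap is in the sentence ``Young's inequality with the $L^1$ kernel $e^{-\delta|u|}$ then shows $G:\cL^p\to\cL^p$ is bounded.'' Young's inequality for the operator-valued convolution gives boundedness of $G$ on $L^p(\R,H)$ with $H=L^2(S^1,\gamma^*T\widehat W)\oplus T_{\lambda_0}\Lambda$, i.e.\ on the mixed norm $L^p_s L^2_\theta$. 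But $\cL^p$ is $L^p(\R\times S^1)\oplus L^p(\R)=L^p_sL^p_\theta$, and $K(u)$ is a genuinely nonlocal operator in $\theta$ (a spectral projection composed with a semigroup), so no pointwise-in-$\theta$ Young argument applies. For $p\ge 2$ this is repairable: $\|\cdot\|_{L^2(S^1)}\le\|\cdot\|_{L^p(S^1)}$ lets you pass from $\cL^p$ into $L^p(\R,H)$, and the return trip to $\cW^{1,p}$ is exactly the paper's Step~2 (a local elliptic estimate whose remainder is the local $\cL^2$-norm, controlled by the local $L^p(\cdot,H)$-norm via H\"older on a bounded interval); this is the content of the paper's Steps~2--4 and is what your phrase ``elliptic bootstrap'' must be expanded into, since the estimate~\eqref{eq:elliptic} as stated has $\|x\|_{\cL^p}$ on the right and cannot be fed the $L^p(\R,H)$ bound directly.

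For $1<p<2$ the argument breaks down irreparably in this form: the embedding $L^p(S^1)\hookrightarrow L^2(S^1)$ fails, a general $y\in\cL^p$ need not lie in $L^p(\R,H)$ at all, so $G$ is not even defined on all of $\cL^p$ by your formula, and the H\"older inequalities in the bootstrap all point the wrong way. The paper handles $1<p<2$ by an entirely different mechanism: it first establishes the case $q=p/(p-1)\ge 2$, then deduces the estimate $\|y\|_{\cL^p}\le C\|Dy\|_{\cW^{-1,p}}$ by duality against the formal adjoint $D^*:\cW^{1,q}\to\cL^q$, and upgrades $\cW^{-1,p}$ to $\cL^p$ by differentiating the equation. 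Your injectivity argument via the eigenfunction expansion is fine (smoothness from elliptic regularity, then $c_ne^{-\mu_ns}\notin L^p(\R)$ unless $c_n=0$), but as written your proof of surjectivity covers only $p\ge 2$.
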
 
 
\begin{proof}  We follow~\cite[Lemma~2.4]{S} 
  and~\cite[Exercise~2.5]{S}.  
 
\smallskip  
\noindent {\it Step~1.} {\it The claim holds for $p=2$.} 
\smallskip  
 
Let  
$$ 
A=D_{(\gamma,\lambda)}:H^1(S^1,\gamma^*T\widehat W)\oplus 
T_{\lambda}\Lambda \to L^2(S^1,\gamma^*T\widehat W)\oplus 
T_{\lambda}\Lambda 
$$ 
be the asymptotic operator at $(\gamma,\lambda)$, defined 
by~\eqref{eq:Dasy}. Our nondegeneracy assumption on 
$(\gamma,\lambda)$ ensures that $A$ is bijective. We view $A$ as an 
unbounded self-adjoint operator on $H:=L^2(S^1,\gamma^*T\widehat W)\oplus 
T_{\lambda}\Lambda$ with domain $W:=H^1(S^1,\gamma^*T\widehat W)\oplus 
T_{\lambda}\Lambda$. The Hilbert space $H$ admits an orthogonal 
decomposition into negative and positive eigenspaces as $H=E^+\oplus 
E^-$. Let $P^\pm:H\to E^\pm$ be the corresponding orthogonal 
projections, and denote $A^\pm:=A|_{E^\pm}$. These operators generate 
strongly continuous semigroups $s\mapsto e^{-A^+s}$ and $s\mapsto 
e^{A^-s}$ defined for $s\ge 0$ and acting on $E^\pm$ respectively. We 
define $K:\R\to \cL(H)$ by  
$$ 
K(s):= \left\{ \begin{array}{ll} e^{-A^+s}P^+, & s\ge 0, \\ 
-e^{-A^-s}P^-, & s<0. 
\end{array}\right. 
$$ 
This function is discontinuous at $s=0$, and strongly continuous for 
$s\neq 0$. Moreover, it satisfies  
$$ 
\|K(s)\|_{\cL(H)}\le e^{-\delta s} 
$$ 
for a suitable constant $\delta>0$, because $A$ is bijective and 
therefore its eigenvalues are bounded away from $0$. We define the 
operator $Q: L^2(\R,H)\to W^{1,2}(\R,H)\cap L^2(\R,W)$ by  
$$ 
Qy(s)=\int_{-\infty}^\infty K(s-\tau)y(\tau) d\tau.  
$$ 
We note that $W^{1,2}(\R,H)\cap L^2(\R,W)=\cW^{1,2}$ and $L^2(\R,H)=\cL^2$, 
and we claim that $Q$ is the inverse of $D$. Indeed, given 
$y\in \cL^2$, the orthogonal decomposition of $x=Qy=x^+ + x^-$ is given 
by   
$$ 
x^+(s)=\int_{-\infty}^s e^{-A^+(s-\tau)}y^+(\tau)\, d\tau, \qquad  
x^-(s)=-\int^{\infty}_s e^{-A^-(s-\tau)}y^-(\tau)\, d\tau. 
$$ 
One computes directly that $\dot x^\pm + A^\pm x^\pm = y^\pm$, so that 
$\dot x+Ax=Dx=y$. This proves Step~1.  
 
\smallskip  
\noindent {\it Step~2.} {\it  
Let $p\ge 2$. There exists a constant $C>0$ such that, for 
all $k\in \Z$ and $x\in \cW^{1,p}([k-1,k+2])$, we have 
$$ 
\|x\|_{\cW^{1,p}([k,k+1])}\le C\Big(\|Dx\|_{\cL^p([k-1,k+2])} + 
\|x\|_{\cL^2([k-1,k+2])}\Big).  
$$ 
} 
\smallskip  
 
We have  
\begin{eqnarray*} 
\|x\|_{\cW^{1,p}([k,k+1])} \hspace{-2mm} & \le & \hspace{-2mm} 
  C_1\big(\|Dx\|_{\cL^p([k-\frac 1 2,k+\frac  
  3 2])} +\|x\|_{\cL^p([k-\frac 12,k+\frac 32])}\big) \\ 
& \le & \hspace{-2mm}C_2\big(\|Dx\|_{\cL^p([k-\frac 1 2,k+\frac 
  3 2])} +\|x\|_{\cW^{1,2}([k-\frac 12,k+\frac 32])}\big) \\ 
& \le & \hspace{-2mm}C_3\big(\|Dx\|_{\cL^p([k-\frac 1 2,k+\frac 
  3 2])} + \|Dx\|_{\cL^2([k-1,k+2])} + \|x\|_{\cL^2([k-1,k+2])}\big)\\ 
& \le & \hspace{-2mm}C_4\big(\|Dx\|_{\cL^p([k-1,k+2])} + 
\|x\|_{\cL^2([k-1,k+2])}\big). 
\end{eqnarray*}  
The first and third inequalities follow from Lemma~\ref{lem:elliptic}. 
The second inequality follows from the Sobolev embedding 
$\cW^{1,2}([k-\frac 12,k+\frac 32])\hookrightarrow \cL^p([k-\frac 12,k+\frac 
32])$ (see~\cite[Theorem~B.1.12]{McDS} and the subsequent discussion 
for the summands defined on $[k-\frac 12,k+\frac 32]\times S^1$, 
and~\cite[Theorem~B.1.11]{McDS} for the summands defined on $[k-\frac 
12,k+\frac 32]$). The last inequality holds because $p\ge 2$, so that  
$\cL^p(I)\hookrightarrow \cL^2(I)$ for any bounded interval $I$.   
 
\smallskip  
\noindent {\it Step~3.} {\it Let $p\ge 2$. There exists a constant 
  $C>0$ such that, if $x\in \cW^{1,2}$ and $Dx\in \cL^p$, then $x\in \cW^{1,p}$ and  
\begin{equation} \label{eq:Step3}   
\|x\|_{\cW^{1,p}} \le C \|Dx\|_{\cL^p}. 
\end{equation}  
} 
\smallskip  
 
We first remark that, if $x\in \cW^{1,2}$ and $Dx\in \cL^p_{\mathrm{loc}}$, 
then $x\in \cW^{1,p}_{\mathrm{loc}}$. Indeed, as seen in Step~2, we have an 
embedding $\cW^{1,2}(I)\hookrightarrow \cL^p(I)$ for any bounded interval 
$I$. The remark then follows from elliptic regularity for $D$ 
(see~\cite[Proposition~1.2.1]{Sik} and the references therein).  
 
Let $H:=L^2(S^1,\gamma^*T\widehat W)\oplus T_\lambda\Lambda$ and, for 
  an interval $I\subset \R$, denote the natural norm on $L^p(I,H)$ by 
  $\|\cdot\|_{L^p(I,H)}$. It follows from Step~2 and the inequality 
  $(a+b)^p\le 2^p(a^p+b^p)$ that  
\begin{eqnarray*} 
\|x\|^p_{\cW^{1,p}([k,k+1])} & \le & 
C\big(\|Dx\|_{\cL^p([k-1,k+2])}+\|x\|_{\cL^2([k-1,k+2])}\big)^p \\ 
& \le & 2^pC\big( \|Dx\|^p_{\cL^p([k-1,k+2])} + 
\|x\|^p_{\cL^2([k-1,k+2])}\big) \\ 
& = & 2^pC\big( \|Dx\|^p_{\cL^p([k-1,k+2])} + 
\|x\|^p_{L^2([k-1,k+2],H)}\big) \\ 
& \le & 2^pC\big( \|Dx\|^p_{\cL^p([k-1,k+2])} + 
3^{p/2-1}\|x\|^p_{L^p([k-1,k+2],H)}\big) \\ 
& \le & 3^{p/2-1} 2^pC \big( \|Dx\|^p_{\cL^p([k-1,k+2])} + 
\|x\|^p_{L^p([k-1,k+2],H)}\big). 
\end{eqnarray*}  
The third inequality is H\"older's. By summing over $k\in \Z$ we 
obtain  
\begin{equation} \label{eq:ineq-2-p}  
\|x\|^p_{\cW^{1,p}} \le C_1\big( \|Dx\|^p_{\cL^p} + 
\|x\|^p_{L^p(\R,H)}\big). 
\end{equation}  
 
Let $Q:\cL^2\to \cW^{1,2}$ be the inverse of $D:\cW^{1,2}\to \cL^2$ as in Step~1.  
Then  
\begin{eqnarray*}  
\lefteqn{\hspace{-3cm}\|x\|_{L^p(\R,H)} = \|QDx\|_{L^p(\R,H)} = 
  \|K*(Dx)\|_{L^p(\R,H)} } \\ 
& \le & \|K\|_{L^1(\R,\cL(H))}\|Dx\|_{L^p(\R,H)} \\ 
& \le & C_2\|Dx\|_{L^p(\R,H)} \\ 
& \le & C_3\|Dx\|_{\cL^p}. 
\end{eqnarray*}  
The first inequality is Young's inequality for a 
convolution~\cite[Th\'eor\`eme~4.30]{B}, and the last inequality follows from the 
fact that $\|\cdot\|_{L^2(S^1)}\le\|\cdot\|_{L^p(S^1)}$, while any two 
norms are equivalent on the finite dimensional space 
$T_\lambda\Lambda$. Combining the above inequality 
with~\eqref{eq:ineq-2-p}, we obtain~\eqref{eq:Step3}. This proves 
Step~3.  
 
\smallskip  
\noindent {\it Step~4.} {\it We prove the lemma for $p\ge 2$.} 
\smallskip  
 
The estimate~\eqref{eq:Step3} holds in particular for $x\in 
C^\infty_0(\R\times S^1,u^*T\widehat W)\oplus 
C^\infty_0(\R,T_\lambda\Lambda)$ and, by density, for all $x\in 
\cW^{1,p}$. We infer that $D:\cW^{1,p}\to \cL^p$ is injective and has a closed 
image. To prove that it is surjective, it is therefore enough to show 
that its image is dense in $\cL^p$. Indeed, it follows from Step~1 and 
Step~3 that its image contains the dense subspace $\cL^p\cap \cL^2$.  
 
\smallskip  
\noindent {\it Step~5.} {\it We prove the lemma for $1<p<2$.} 
\smallskip  
 
Let $q>2$ be such that $1/p+1/q=1$. Define 
$\cW^{-1,p}:=W^{-1,p}(\R\times S^1,u^*T\widehat W)\oplus 
W^{-1,p}(\R,T_\lambda\Lambda)$, where $W^{-1,p}$ is the dual space of 
$W^{1,q}$, so that $\cW^{-1,p}$ is the dual space of $\cW^{1,q}$. Note also 
that $\cL^q$ is the dual of $\cL^p$. The formal 
adjoint $D^*$ defined in~\eqref{eq:adjoint} is canonically identified 
with the functional analytic adjoint $D^*:\cW^{1,q}\to \cL^q$ of 
$D:\cL^p\to \cW^{-1,p}$. By Step~4, there exists a constant $C>0$ such 
that, for any $x\in \cW^{1,q}$, we have  
\begin{equation} \label{eq:D*}  
\|x\|_{\cW^{1,q}} \le C\|D^*x\|_{\cL^q}.  
\end{equation}  
Using that $D^*$ is bijective and duality, we obtain for $y\in \cL^p$ 
\begin{eqnarray} 
\lefteqn{\hspace{-3cm}\|y\|_{\cL^p}  =  \sup _{\|z\|_{\cL^q}=1} |\langle z,y \rangle|  
 =  \sup _{\|D^*x\|_{\cL^q}=1} |\langle D^*x,y \rangle|  
 =  \sup _{\|D^*x\|_{\cL^q}=1} |\langle x,Dy \rangle| } \nonumber \\ 
& \le & \sup _{\|D^*x\|_{\cL^q}=1} \|x\|_{\cW^{1,q}} \|Dy\|_{\cW^{-1,p}} 
 \nonumber \\ 
& \le & \sup _{\|D^*x\|_{\cL^q}=1} C\|D^*x\|_{\cL^q} \|Dy\|_{\cW^{-1,p}} 
 \nonumber \\ 
& = & C\|Dy\|_{\cW^{-1,p}}. \label{eq:X-1p} 
\end{eqnarray}  
The last inequality uses~\eqref{eq:D*}.  
 
We now prove that there exists a constant $C>0$ such that, for any 
$x\in \cW^{1,p}$, we have  
\begin{equation} \label{eq:x-Dx}   
\|x\|_{\cW^{1,p}} \le C\|Dx\|_{\cL^p}.  
\end{equation}  
For $x=(\zeta,\ell)\in \cW^{1,p}$, we have  
\begin{equation} \label{eq:3terms} 
\|x\|_{\cW^{1,p}}\le 
C_1(\|x\|_{\cL^p} + \|\partial_s x\|_{\cL^p} + \|\partial_\theta 
\zeta\|_{L^p}). 
\end{equation}  
Using~\eqref{eq:X-1p} and the inclusion 
$\cL^p\hookrightarrow \cW^{-1,p}$ we obtain  
$$ 
\|x\|_{\cL^p} \le C\|Dx\|_{\cW^{-1,p}} \le C\|Dx\|_{\cL^p}.  
$$ 
On the other hand we have  
\begin{eqnarray*}  
\|\partial_s x\|_{\cL^p} & \le & C\|D(\partial _s x)\|_{\cW^{-1,p}} \\ 
& = & C\| \partial_s(Dx) - (\partial_s N)x\|_{\cW^{-1,p}} \\ 
& \le & C_1\big( \|\partial_s(Dx)\|_{\cW^{-1,p}} + 
\|x\|_{\cW^{-1,p}}\big)\\ 
& \le & C_1\big( \|Dx\|_{\cL^p} + 
\|x\|_{\cL^p}\big) \\ 
&\le & C_2 \|Dx\|_{\cL^p}. 
\end{eqnarray*}  
The first and last inequalities use~\eqref{eq:X-1p} for $\partial_s x$ 
and $x$, the second inequality uses that $\partial_s N$ and its 
derivatives are pointwise bounded, and the third inequality uses that 
$\partial _s:\cL^p\to \cW^{-1,p}$ is bounded (and of norm $1$).  
Similarly, we have $\|\partial_\theta \zeta\|_{L^p}\le 
C_3\|Dx\|_{\cL^p}$. Using~\eqref{eq:3terms} we obtain~\eqref{eq:x-Dx}. 
 
It follows from~\eqref{eq:x-Dx} that $D:\cW^{1,p}\to \cL^p$ is injective and 
has a closed image. To prove that it is surjective, it is enough to 
show that its image is dense in $\cL^p$. Consider therefore $y\in \cL^q$ 
such that $\langle Dx,y\rangle=0$ for all $x\in \cW^{1,p}$. We obtain 
$D^*y=0$ in $\cW^{-1,q}$. By elliptic regularity for $D^*$, we infer 
$y\in \cW^{1,q}$. Since $D^*:\cW^{1,q}\to \cL^q$ is injective by Step~4, we obtain 
$y=0$.    
\end{proof} 
 
\begin{lemma} \label{lem:Fredholm} 
 Let $p>1$. Under the hypotheses of Theorem~\ref{thm:Fredholm}, there exists 
$T>0$ and a constant $C>0$ such that  
\begin{equation} \label{eq:Fredholm-bis} 
\|x\|_{\cW^{1,p}}\le C\big(\|Dx\|_{\cL^p} + \|Kx\|_{\cL^p([-T,T])}\big), 
\end{equation} 
where $K:\cW^{1,p}\to \cL^p([-T,T])$ is the restriction operator. 
\end{lemma}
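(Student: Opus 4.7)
The plan is the standard Fredholm alternative argument: far from the compact part of the cylinder, the operator $D$ is a small perturbation of the constant operators associated to the two asymptotes, which are invertible by Lemma~\ref{lem:constant}; in the middle region, we control $x$ by Lemma~\ref{lem:elliptic} at the cost of an $\cL^p$ remainder, which is precisely the $Kx$ term.

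First, by the asymptotic conditions~\eqref{eq:asymptoticpar} and continuity of the trivialization, the matrix-valued function $N(s,\theta)$ appearing in~\eqref{eq:Dtriv} converges as $s\to\pm\infty$ to the (symmetric) matrices $N_\pm(\theta)$ corresponding to the constant trajectories $\ou(s,\theta)=\og(\theta)$, $\ulambda(s)=\olambda$ at $-\infty$ and $\uu(s,\theta)=\ug(\theta)$, $\ulambda(s)=\ulambda$ at $+\infty$. Let $D_\pm$ denote the corresponding constant-coefficient operators on $\R\times S^1$. By Lemma~\ref{lem:constant}, each $D_\pm:\cW^{1,p}\to\cL^p$ is a Banach space isomorphism, so there is a constant $C_0>0$ with
$$
\|y\|_{\cW^{1,p}}\le C_0\,\|D_\pm y\|_{\cL^p}\qquad\text{for every }y\in\cW^{1,p}.
$$
Given $\eps>0$ (to be fixed momentarily), choose $T_0>0$ so large that $\|N(s,\theta)-N_-(\theta)\|\le\eps$ for $s\le -T_0$ and $\|N(s,\theta)-N_+(\theta)\|\le\eps$ for $s\ge T_0$, uniformly in $\theta$.

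Next I introduce a smooth partition of unity $1=\chi_-^2+\chi_0^2+\chi_+^2$ on $\R$ adapted to $T:=T_0+1$, with $\chi_-$ supported in $]-\infty,-T_0]$, $\chi_+$ supported in $[T_0,\infty[$, and $\chi_0$ supported in $[-T,T]$, chosen so that $\chi_\pm$ equal $1$ on $\{\pm s\ge T\}$ and $|\chi_0'|,|\chi_\pm'|\le C_1$. For $x\in\cW^{1,p}$, each $\chi_*x$ lies in $\cW^{1,p}$. On the support of $\chi_-$, we can write $D(\chi_- x)=\chi_- D_- x + \chi_-(N-N_-)x + [D,\chi_-]x$, where the commutator $[D,\chi_-]x=\chi_-' x$ (a zeroth order multiplication by a compactly supported function). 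Applying the isomorphism estimate for $D_-$ to $\chi_- x$, I get
$$
\|\chi_- x\|_{\cW^{1,p}}\le C_0\|D_-(\chi_- x)\|_{\cL^p}\le C_0\bigl(\|\chi_- Dx\|_{\cL^p}+\eps\|\chi_- x\|_{\cL^p}+C_1\|\chi_-' x\|_{\cL^p([-T,-T_0])}\bigr),
$$
and similarly for $\chi_+ x$. For the middle term I apply Lemma~\ref{lem:elliptic}, summing its local version~\eqref{eq:elliptic-loc} over the finitely many unit intervals covering $\supp\chi_0\subset[-T,T]$ (enlarged by $1$ on each side to absorb the right-hand side):
$$
\|\chi_0 x\|_{\cW^{1,p}}\le C_2\bigl(\|D(\chi_0 x)\|_{\cL^p}+\|\chi_0 x\|_{\cL^p}\bigr)\le C_3\bigl(\|Dx\|_{\cL^p}+\|x\|_{\cL^p([-T-1,T+1])}\bigr),
$$
where I used $D(\chi_0 x)=\chi_0 Dx+\chi_0' x$.

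Adding the three inequalities and choosing $\eps$ so that $C_0\eps\le 1/4$, the terms $\eps\|\chi_\pm x\|_{\cL^p}$ are absorbed into the left-hand side via $\|\chi_\pm x\|_{\cL^p}\le\|\chi_\pm x\|_{\cW^{1,p}}$. The remaining error terms, namely $\|\chi_\pm' x\|_{\cL^p}$ and $\|\chi_0 x\|_{\cL^p}$ and the term $\|x\|_{\cL^p([-T-1,T+1])}$, are all controlled by $\|x\|_{\cL^p([-T',T'])}$ for some $T'\ge T+1$; after relabeling $T\leftarrow T'$, this is precisely $\|Kx\|_{\cL^p([-T,T])}$, yielding~\eqref{eq:Fredholm-bis}. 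The bookkeeping of constants is routine; the main conceptual step, which is the only nontrivial one, is the use of Lemma~\ref{lem:constant} to invert the asymptotic operators on the whole cylinder (not just on $S^1$), which is what allows the cutoff argument to produce a $\cW^{1,p}$-norm on the left and only an $\cL^p$-norm of the restriction on the right.
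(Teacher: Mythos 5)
Your proof is correct and follows essentially the same route as the paper: invert the constant asymptotic operators via Lemma~\ref{lem:constant} on the ends (you make explicit the perturbation-and-absorption step that the paper compresses into ``invertibility is an open property''), control the middle by the elliptic estimate of Lemma~\ref{lem:elliptic}, and patch with cutoffs whose commutator terms are supported in a compact band and hence absorbed into $\|Kx\|_{\cL^p([-T,T])}$. The only cosmetic differences are your three-piece partition of unity versus the paper's two-piece decomposition $x=\beta x+(1-\beta)x$.
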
 
 
\begin{proof} 
Let $(\ou,\widetilde\olambda)$ and $(\uu,\widetilde\ulambda)$ be the 
constant trajectories at $(\og,\olambda)$ and $(\ug,\ulambda)$ 
respectively. Denote by $\oD:=D_{(\ou,\widetilde\olambda)}$ and 
$\uD:=D_{(\uu,\widetilde\ulambda)}$ the corresponding operators which, 
by Lemma~\ref{lem:constant}, are isomorphisms.  
Since invertibility is an open 
property in the space of operators, and because the order $0$ part of 
$D$ converges as $s\to\pm\infty$ to the order $0$ part of $\uD$ and 
$\oD$ respectively, we infer the existence of constants $T>0$ and 
$C>0$ such that, for every $x\in \cW^{1,p}$ such that $x(s)=0$ for $|s|\le 
T-1$, we have  
\begin{equation} \label{eq:truncate}  
\|x\|_{\cW^{1,p}}\le C\|Dx\|_{\cL^p}. 
\end{equation}  
Let $\beta:\R\to [0,1]$ be a smooth cutoff function such that 
$\beta(s)=0$ for $|s|\ge T$ and $\beta(s)=1$ for $|s|\le T-1$. We 
obtain  
\begin{eqnarray*} 
\|x\|_{\cW^{1,p}} & \le & \|\beta x\|_{\cW^{1,p}} + \|(1-\beta) x\|_{\cW^{1,p}} \\ 
& \le & C_1 \big( \|D(\beta x)\|_{\cL^p} + \|\beta x\|_{\cL^p} +  
\|D((1-\beta) x)\|_{\cL^p} \big) \\ 
& \le & C_2 \big( \|Dx\|_{\cL^p} + \|Kx\|_{\cL^p([-T,T])}\big). 
\end{eqnarray*}  
The first and the third inequality are straightforward, whereas the 
second uses~\eqref{eq:elliptic} and~\eqref{eq:truncate}. This proves 
the lemma. 
\end{proof}

%%%%%%%%%%%%%%%%%%%%%%%%%%%%%%%%%%%%%%%%%%%%%% 
%%%%%%%%%%%%%%%%%%%%%%%%%%%%%%%%%%%%%%%%%%%%%% 
%%%%%%%%%%%%%%%% Section 3 %%%%%%%%%%%%%%%%%%% 
%%%%%%%%%%%%%%%%%%%%%%%%%%%%%%%%%%%%%%%%%%%%%% 
%%%%%%%%%%%%%%%%%%%%%%%%%%%%%%%%%%%%%%%%%%%%%% 

\section{Unique continuation for the parametrized Floer equation} \label{sec:unique}

The fundamental property on which rest transversality results in Floer
theory~\cite{FHS} is the unique continuation principle for Floer
trajectories. We know of two ways to prove it. The first one is the Carleman similarity
principle~\cite[Theorem~2.2]{FHS}, which cannot hold in our setup due
to the integral term, which makes the system of
equations~(\ref{eq:Floer1par}-\ref{eq:Floer2par}) non-local. The
second one is Aronszajn's theorem, stating that a solution of a
\emph{pointwise} differential inequality involving an elliptic
operator of order $2$ satisfies the unique continuation
property~\cite{Ar}. Again, one cannot apply it to our setup
because of the integral term. 

Aronszajn's theorem relies on a local estimate~\cite[(2.4)]{Ar} which is
nowadays called a {\it Carleman-type inequality}. We will extend
Aronszajn's theorem to a class of integro-differential elliptic
inequalities by proving a semi-local Carleman-type inequality. This
generalization of Aronszajn's theorem will apply to the solutions of
our system of equations. Our arguments closely follow the ones of
Aronszajn~\cite{Ar}. 

\medskip 

For $r>0$, we denote $Z_r:=]-r,r[\times S^1$, with coordinates $(s,\theta)$. 

\begin{proposition}[Semi-local Carleman inequality] 
\label{prop:semilocalCarleman}
Let $h>0$. There exist $c>0$ and $\alpha_0 > 0$ such that, for any $0<r\le h$, 
$\alpha \ge \alpha_0$ and $u\in C^\infty_0(Z_r,\C^n)$ which vanishes together
with all its derivatives along $\{0\}\times S^1$, we have 
\begin{equation} \label{eq:Carleman}
c\, r^2 \, \int_{Z_r} |s|^{-2\alpha} |\Delta u|^2 \, dsd\theta \ge 
\int_{Z_r} |s|^{-2\alpha} \big(|\nabla u|^2 + |u|^2\big) \, dsd\theta.
\end{equation}
\end{proposition}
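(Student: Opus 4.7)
The plan is to adapt Aronszajn's classical local Carleman argument \cite{Ar} to the cylinder $Z_r$, using the singular weight $|s|^{-\alpha}$. By the symmetry $s\mapsto -s$ it suffices to prove the analogous estimate on the half-cylinder $Z_r^+ := \,]0,r[\, \times S^1$ and double. The hypothesis that $u$ vanishes to infinite order along $\{0\}\times S^1$, combined with the compact support of $u$ in $Z_r$, guarantees that the restriction of $u$ to $Z_r^+$ can be treated as a test function for which every integration by parts produces no boundary contribution, either at $\{s=0\}$ (infinite-order vanishing) or at $\{s=r\}$ (compact support).

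The core step is the conjugation method. Set $v(s,\theta) := s^{-\alpha}u(s,\theta)$ on $Z_r^+$ and compute the conjugated Laplacian $P := s^{-\alpha}\circ \Delta \circ s^{\alpha}$, giving
\[
Pv = \Delta v + 2\alpha s^{-1}\partial_s v + \alpha(\alpha-1)s^{-2}v.
\]
Decomposing $P = P_+ + P_-$ into its $L^2$-symmetric and antisymmetric parts yields $P_+ = \Delta + \alpha^2 s^{-2}$ and $P_- = 2\alpha s^{-1}\partial_s - \alpha s^{-2}$. The identity
\[
\|Pv\|^2 = \|P_+ v\|^2 + \|P_- v\|^2 + \langle [P_+,P_-]v,\, v\rangle
\]
then reduces the problem to a commutator positivity estimate. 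A direct computation shows that the leading term of $[P_+,P_-]$ is $4\alpha^3 s^{-4}$, accompanied by a positive piece controlling $s^{-2}|\partial_s v|^2$; the remaining lower-order terms can be absorbed for $\alpha$ large enough, depending only on $h$. This yields, for $\alpha \ge \alpha_0$, an estimate of the form
\[
\|Pv\|^2 \ge C\alpha^3 \int_{Z_r^+} s^{-4}|v|^2\,dsd\theta + C\alpha \int_{Z_r^+} s^{-2}|\nabla v|^2\,dsd\theta.
\]

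The final step is to translate this back to $u$. From $|s|^{-2\alpha}|u|^2 = |v|^2$, $|s|^{-\alpha}\partial_\theta u = \partial_\theta v$, and $|s|^{-\alpha}\partial_s u = \alpha s^{-1}v + \partial_s v$, one bounds $|s|^{-2\alpha}(|u|^2 + |\nabla u|^2)$ by a constant times $|v|^2 + \alpha^2 s^{-2}|v|^2 + |\nabla v|^2$. The factor $r^2$ in the claim is then recovered from the trivial pointwise bound $s^{-4} \ge r^{-2}\, s^{-2}$ for $0<s\le r$, which allows the $\alpha^3 s^{-4}$ and $\alpha s^{-2}|\nabla v|^2$ terms on the right of the commutator estimate to dominate $|v|^2$ and $|\nabla v|^2$ after multiplication by $r^2$, with room to spare for $\alpha$ large. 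The main technical obstacle I anticipate is the careful bookkeeping of the commutator computation so as to extract genuine positivity of the leading $\alpha^3$-contribution while controlling the first-order remainders uniformly in $r\in\,]0,h]$; the infinite-order vanishing at $\{0\}\times S^1$ is essential, as it is precisely what licenses the use of the singular weight $s^{-\alpha}$ in the first place.
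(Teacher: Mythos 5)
Your proposal follows essentially the same route as the paper's second proof (due to Robbiano): conjugate by the weight, set $v=s^{-\alpha}u$ on the half-cylinder, split the conjugated Laplacian into its $L^2$-symmetric and antisymmetric parts, extract positivity from the cross terms for $\alpha$ large, and recover the factor $r^2$ from $0<s\le r\le h$ when translating back to $u$. One imprecision to fix when writing it out: the commutator $[P_+,P_-]$ contains no $\theta$-derivatives at all (the $\partial_\theta^2$ part of $\Delta$ commutes with $s^{-1}\partial_s$ and with $s^{-2}$), so it yields $4\alpha\|s^{-1}\partial_s v\|^2+(4\alpha^3-6\alpha)\|s^{-2}v\|^2$ but cannot by itself produce the $\alpha\,s^{-2}|\partial_\theta v|^2$ term appearing in your displayed intermediate estimate; that term (or the weaker bound $\|\partial_\theta v\|^2\le C r^2\|Pv\|^2$, which is all the final inequality requires) must be recovered separately, e.g.\ from the identity $\|\partial_\theta v\|^2=-\mathrm{Re}\langle P_+v,v\rangle-\|\partial_s v\|^2+\alpha^2\|s^{-1}v\|^2$ together with Cauchy--Schwarz and the already-established control of $\|s^{-2}v\|^2$. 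This is a standard extra step and does not affect the soundness of the plan.
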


\begin{proof}[First proof.] In our first proof, we use a change of variables inspired by the original paper of Aronszajn~\cite{Ar}. 
 It is enough to prove the inequality on $Z_r^+:=]0,r[\times S^1$. We
make the change of variables $s=e^{-\rho}$, $\chi < \rho <\infty$,
$\chi=-\log r$ and define 
$$w:]\chi,\infty[\times S^1\to \C^n, \quad 
w(\rho,\theta)=e^{\beta\rho}u(e^{-\rho},\theta),
$$ 
with $\beta=\alpha+ \frac 3 2$. Our assumption on $u$ guarantees that
$w$ vanishes together with all its derivatives as $\rho\to\infty$.
We denote $w'=\p w /\p \rho$ and
$w''=\p^2 w/ \p \rho^2$. A straightforward computation shows
that 
\begin{eqnarray*}
\frac {\p u} {\p
s}(e^{-\rho},\theta) & = & -e^{(1-\beta)\rho}\big(w'(\rho,\theta)-\beta
w(\rho,\theta)\big), \\
\frac {\p^2 u} {\p
s^2}(e^{-\rho},\theta) & = & e^{(2-\beta)\rho}\big(w''(\rho,\theta)-(2\beta-1)
w'(\rho,\theta) + (\beta^2-\beta)w(\rho,\theta)\big).
\end{eqnarray*}
We therefore obtain 
$$
\int_{Z_r^+} |s|^{-2\alpha} |\Delta u|^2 \, dsd\theta =
\int_\chi^\infty \!\!\!\!\int_{S^1} |w''-(2\beta-1)w'+(\beta^2-\beta)w +
\Delta_\theta w|^2 \, d\rho d\theta, 
$$
with $\Delta_\theta w:=e^{-2\rho} \frac {\p^2 w} {\p \theta^2}$. 
We denote the last integral by $I$. Expanding the integrand in $I$ we
obtain 
\begin{eqnarray*}
I & = & \int_\chi^\infty \!\!\!\!\int_{S^1} |w''|^2 + |(\beta^2-\beta)w +
\Delta_\theta w|^2  \\
& & \hspace{-.5cm}+ \ \int_\chi^\infty  \!\!\!\!\int_{S^1} (2\beta-1)^2|w'|^2 +
w''[(\beta^2-\beta)\bar w + \Delta_\theta \bar w] + 
\bar w''[(\beta^2-\beta) w + \Delta_\theta w] \\
& & \hspace{-.5cm}+ \ \int_\chi^\infty  \!\!\!\!\int_{S^1} (1-2\beta)
(w'\Delta_\theta \bar w + \bar w'\Delta_\theta w).
\end{eqnarray*}
We have used that $\iint w'' \bar w' + \bar w''w' =0$ and that $\iint
w'\bar w+\bar w'w=0$, which follow from the fact that $w$ and $w'$
vanish for $\rho\to\infty$ and $\rho$ near $\chi$. We denote the above
three integrals by $J^1$, $J^2$, and $J^3$ respectively. Since $J^1\ge
0$ we have $I\ge J^2+J^3$. 

We treat $J^2$. Using integration by parts with respect to $\rho$ we
obtain $\iint w''\bar w+\bar w''w=-2\iint |w'|^2$. Using integration
by parts with respect to $\theta$ and $\rho$ we obtain 
\begin{eqnarray*}
\int_\chi^\infty \!\!\!\!\int_{S^1} w''\Delta_\theta\bar w & = & 
\int_\chi^\infty \!\!\!\!\int_{S^1} \Big(\big| \frac {\p w'}
{\p\theta} \big|^2 -2 \frac {\p w'}
{\p\theta} \frac {\p \bar w} {\p\theta} \Big) e^{-2\rho} \\
& = & \int_\chi^\infty \!\!\!\!\int_{S^1} \Big(\big| \frac {\p w'}
{\p\theta} \big|^2 +2 \frac {\p w}
{\p\theta} \frac {\p \bar w'} {\p\theta} - 4 \big|\frac {\p w}
{\p\theta}\big|^2 \Big) e^{-2\rho}.
\end{eqnarray*}
For the second equality we have used another integration by parts with
respect to $\rho$. Thus 
$$
\int_\chi^\infty \!\!\!\!\int_{S^1} w''\Delta_\theta\bar w +
\bar w''\Delta_\theta w = \int_\chi^\infty \!\!\!\!\int_{S^1} \Big(2\big| \frac {\p w'}
{\p\theta} \big|^2  - 4 \big|\frac {\p w}
{\p\theta}\big|^2 \Big) e^{-2\rho}.
$$
Finally 
\begin{equation} \label{eq:J2}
J^2= \int_\chi^\infty \!\!\!\!\int_{S^1} \big(2(\beta-\frac 1 2)^2+
\frac 1 2\big) |w'|^2 + 2 \big| \frac {\p w'}
{\p\theta} \big|^2 e^{-2\rho} - \int_\chi^\infty \!\!\!\!\int_{S^1} 4 \big|\frac {\p w}
{\p\theta}\big|^2 e^{-2\rho}.
\end{equation}

We treat $J^3$. Integrating by parts with respect to $\theta$ and
$\rho$ we obtain 
\begin{eqnarray*}
\int_\chi^\infty \!\!\!\!\int_{S^1} w'\Delta_\theta \bar w & = & -
\int_\chi^\infty \!\!\!\!\int_{S^1} \frac {\p w'} {\p \theta} \frac {\p
\bar w} {\p \theta} e^{-2\rho} \\
& = & \int_\chi^\infty \!\!\!\!\int_{S^1} \Big( \frac {\p w} {\p
\theta} \frac {\p \bar w'} {\p \theta} -2 \big| \frac {\p w} {\p
\theta} \big|^2\Big) e^{-2\rho},
\end{eqnarray*}
so that 
$$
\int_\chi^\infty \!\!\!\!\int_{S^1} w'\Delta_\theta \bar w  + \bar
w'\Delta_\theta w = -\int_\chi^\infty \!\!\!\!\int_{S^1} 2\big| \frac {\p w} {\p
\theta} \big|^2 e^{-2\rho}.
$$

We denote $I^2$ the first integral in the expression~\eqref{eq:J2}
for $J^2$, and 
\begin{eqnarray*}
I^3 & := & J^3 -\int_\chi^\infty \!\!\!\!\int_{S^1} 4 \big|\frac {\p w}
{\p\theta}\big|^2 e^{-2\rho} \\
& = & \int_\chi^\infty \!\!\!\!\int_{S^1} (4\beta-6) \big|\frac {\p w}
{\p\theta}\big|^2 e^{-2\rho}.
\end{eqnarray*}
 so
that $J^2+J^3=I^2+I^3$.  

We now treat the right hand side in~\eqref{eq:Carleman}. Using the
same change of variables as above we obtain 
$$
\int_{Z_r^+} \!\! |s|^{-2\alpha} \big(|\nabla u|^2 + |u|^2\big) \,
dsd\theta \! = \!\!
\int_\chi^\infty \!\!\!\!\int_{S^1} \!\! |w'-\beta w|^2 e^{-2\rho} + \big|\frac {\p w}
{\p\theta}\big|^2 e^{-4\rho} + |w|^2 e^{-4\rho} \, d\rho d\theta.
$$
The first term in the integrand is $|w'-\beta
w|^2e^{-2\rho}=|w'|^2e^{-2\rho} + \beta^2|w|^2e^{-2\rho} -
\beta(w'\bar w+\bar w'w)e^{-2\rho}$, and we have 
$$
\int_\chi^\infty \!\!\!\!\int_{S^1} (w'\bar w+\bar w'w)e^{-2\rho} = 
\int_\chi^\infty \!\!\!\!\int_{S^1} (w'\bar w - \bar w(w'-2w))
e^{-2\rho}
=\int_\chi^\infty \!\!\!\!\int_{S^1} 2|w|^2 e^{-2\rho}.
$$
The right hand side in~\eqref{eq:Carleman} is therefore equal to 
$$
\int_\chi^\infty \!\!\!\!\int_{S^1} |w'|^2e^{-2\rho} +
(\beta^2-2\beta)|w|^2 e^{-2\rho} + \big|\frac {\p w}
{\p\theta}\big|^2 e^{-4\rho} + |w|^2 e^{-4\rho}.
$$

We now recall the inequality~(4.10) in~\cite{Ar} which writes in our case,
for $\theta\in S^1$ fixed,
\begin{equation} \label{eq:Poincare}
\int_\chi^\infty |w|^2 e^{-\tau \rho} \, d\rho \le \frac
{e^{-\tau\chi}} {\tau ^2} \int_\chi^\infty |w'|^2\, d\rho, \qquad \tau>0.
\end{equation}
To prove~\eqref{eq:Poincare} we write $w(\rho)=\int_\chi^\rho w'$, and use
the Cauchy-Schwarz inequality to obtain
$|w(\rho)|^2\le(\rho-\chi)\int_\chi^\infty |w'|^2$. On the other hand
we have $\int_\chi^\infty e^{-\tau\rho}(\rho-\chi)\, d\rho =
e^{-\tau\chi}/\tau^2$. 

Using~\eqref{eq:Poincare} with $\tau=2$ and the relations $4\beta -
6=4\alpha \ge 4$ and $I\ge I^2+I^3$, we obtain the desired conclusion
with the constants $c=1$ and $\alpha_0 = 1$. 
\end{proof}

\begin{proof}[Second proof, by Luc Robbiano]
We again work on $Z_r^+$. We define $v:=s^{-\alpha}u$, so that $v$ vanishes with all its derivatives along $\{0\}\times S^1$. 
We have
\begin{eqnarray*}
\p_s u & = & s^\alpha \p_s v + \alpha s^{\alpha-1}v, \\
\p_s^2 u & = & s^\alpha\p_s^2v +2\alpha s^{\alpha-1}\p_s v + \alpha(\alpha-1) s^{\alpha-2}v.
\end{eqnarray*}
We obtain 
$$
s^{-\alpha}\Delta u = \p_s^2v + \p_\theta^2v+ 2\alpha s^{-1}\p_s v + \alpha(\alpha-1) s^{-2}v.
$$ 
In order to estimate 
$A:=\int_{Z_r^+}|s^{-\alpha}\Delta u|^2$, we separate self-adjoint and anti-adjoint terms in the previous expression. Denoting $\langle\cdot,\cdot\rangle$ the $L^2$-scalar product for functions defined on $Z_r^+$, and $\|\cdot\|$ the corresponding $L^2$-norm, we obtain 
\begin{eqnarray*}
A& = & \|\p_s^2 v + \p_\theta^2v+ \alpha(\alpha-1) s^{-2}v\|^2 + 4\alpha^2\|s^{-1}\p_s v\|^2 \\
& & + \ 2\Re\langle \p_s^2 v + \p_\theta^2v+ \alpha(\alpha-1) s^{-2}v,2\alpha s^{-1}\p_s v\rangle.
\end{eqnarray*}
Let us further compute the last term. We have
\begin{eqnarray*}
\langle \p_s^2v,2\alpha s^{-1}\p_sv\rangle & = & -\langle \p_s v,2\alpha\p_s(s^{-1}\p_sv)\rangle \\
& = & -\langle \p_sv,-2\alpha s^{-2} \p_sv\rangle -\langle \p_sv,2\alpha s^{-1}\p_s^2v\rangle,
\end{eqnarray*}
so that 
$$
2\Re\langle \p_s^2 v,2\alpha s^{-1}\p_s v\rangle = 2\alpha\| s^{-1}\p_s v\|^2.
$$
Similarly, we have 
\begin{eqnarray*}
\langle \p_\theta^2v,2\alpha s^{-1}\p_sv\rangle & =  & -\langle 2\alpha \p_s(s^{-1}\p_\theta^2 v), v\rangle \\
& = & -2\alpha \langle s^{-1}\p_s\p_\theta^2 v, v\rangle + 2\alpha\langle s^{-2}\p_\theta^2 v,v\rangle \\
& = & -2\alpha \langle s^{-1}\p_s v,\p_\theta^2 v\rangle + 2\alpha\langle s^{-2}\p_\theta^2 v,v\rangle,
\end{eqnarray*}
so that 
$$
2\Re\langle \p_\theta^2v,2\alpha s^{-1}\p_sv\rangle =  2\alpha\langle s^{-2}\p_\theta^2 v,v\rangle=-2\alpha\|s^{-1}\p_\theta v\|^2.
$$
Finally, we have 
\begin{eqnarray*}
\langle \alpha(\alpha-1) s^{-2}v,2\alpha s^{-1}\p_s v\rangle & \!\!=\!\! & -\langle 2\alpha^2(\alpha-1)\p_s(s^{-3}v),v\rangle\\
& \!\!=\!\!& -\langle 2\alpha^2(\alpha-1)s^{-3}\p_sv,v\rangle + \langle 6\alpha^2(\alpha-1)s^{-4}v,v\rangle,
\end{eqnarray*}
so that
$$
2\Re \langle \alpha(\alpha-1) s^{-2}v,2\alpha s^{-1}\p_s v\rangle = 6\alpha^2(\alpha-1)\|s^{-2}v\|^2.
$$
Let us now denote $Bv:=\p_s^2v+\p_\theta^2v+\alpha(\alpha-1)s^{-2}v$, so that 
\begin{eqnarray*}
A& =& \|Bv\|^2+(4\alpha^2+2\alpha)\|s^{-1}\p_sv\|^2 + 6\alpha^2(\alpha-1)\|s^{-2}v\|^2 + 2\alpha\|s^{-1}\p_\theta v\|^2 \\
& & + \  4\alpha\langle s^{-2}(Bv-\p_s^2v-\alpha(\alpha-1)s^{-2}v),v\rangle.
\end{eqnarray*}
We again further compute the last term. We have 
$$
|\langle 4\alpha s^{-2}Bv,v\rangle |\le \|Bv\|^2 + 4\alpha^2\|s^{-2}v\|^2.
$$
We also have 
\begin{eqnarray*}
-4\alpha\langle s^{-2}\p_s^2v,v\rangle & =& 4\alpha\langle \p_s v,\p_s(s^{-2}v)\rangle \\
& =& 4\alpha \langle \p_sv,s^{-2}\p_s v\rangle -8\alpha \langle \p_sv,s^{-3}v\rangle,
\end{eqnarray*}
and 
$$
|8\alpha \langle \p_sv,s^{-3}v\rangle|\le 4\|s^{-1}\p_s v\|^2 + 4\alpha^2 \|s^{-2}v\|^2. 
$$
We obtain 
\begin{eqnarray*}
A & \ge & (4\alpha^2+6\alpha-4)\|s^{-1}\p_s v\|^2 + 2\alpha^2(\alpha-5)\|s^{-2}v\|^2 +2\alpha \|s^{-1}\p_\theta v\|^2\\
& \ge & 2 \|s^{-1}\p_s v\|^2+4\alpha^2\|s^{-2}v\|^2+2\|s^{-1}\p_\theta v\|^2.
\end{eqnarray*}
The last inequality holds if $\alpha\ge 7$. Now since $v := s^{-\alpha} u$, we have
\begin{eqnarray*}
\| s^{-\alpha-1} \p_s u\| &\le&  \| s^{-1} \p_s v \| + \alpha \| s^{-2} v \|,  \\
\| s^{-\alpha-2} u \| &=& \| s^{-2} v \|, \\
\| s^{-\alpha-1} \p_\theta u \| &=& \| s^{-1} \p_\theta v \|. 
\end{eqnarray*}
Substituting in the above estimate, we obtain
$$
A \ge \| s^{-\alpha-1} \p_s u\|^2 + \| s^{-\alpha-1} \p_\theta u \|^2 + \| s^{-\alpha-2} u \|^2. 
$$
Since $s < r \le h$, we deduce the desired inequality with the constants $c = \max(1,h^2)$ and $\alpha_0 = 7$.
\end{proof}

\begin{theorem}[Unique continuation for integro-differential
inequalities] \label{thm:inequality}
Let $h>0$. Assume $u\in C^\infty(Z_h,\C^n)$ satisfies 
\begin{equation} \label{eq:inequality}
|\Delta u(s,\theta)|^2\le M\Big[
|u(s,\theta)|^2+ |\nabla u(s,\theta)|^2 + \int_{S^1}|u(s,\tau)|^2 \,
d\tau \Big]
\end{equation}
for all $(s,\theta)\in Z_h$, where $M>0$ is a positive constant. If
$u$ vanishes together with all its derivatives on $\{0\}\times S^1$,
then $u\equiv 0$ on $Z_h$. 
\end{theorem}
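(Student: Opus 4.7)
The plan is to adapt Aronszajn's argument~\cite{Ar} to the integro-differential setting by using Proposition~\ref{prop:semilocalCarleman} in place of the classical Carleman inequality. The key observation is that the non-local term $\int_{S^1}|u(s,\tau)|^2\,d\tau$ in~\eqref{eq:inequality} becomes harmless after integration against the weight $|s|^{-2\alpha}$, precisely because that weight depends only on~$s$: by Fubini, for any cutoff $\chi=\chi(s)$,
\[
\int_{Z_r}|s|^{-2\alpha}\chi^2(s)\!\int_{S^1}\!|u(s,\tau)|^2\,d\tau\,ds\,d\theta \;=\; 2\pi\!\int_{Z_r}|s|^{-2\alpha}\chi^2|u|^2.
\]

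I would first establish the local propagation statement that if $u$ vanishes with all its derivatives along a slice $\{s_0\}\times S^1$ with $s_0\in\,]-h,h[$, then $u\equiv 0$ on a neighbourhood of that slice. By a translation in~$s$ one may assume $s_0=0$. Fix $0<r_1<r_2\le h$ (with $r_2$ to be chosen small) and a cutoff $\chi\in C^\infty_0(]-r_2,r_2[)$ depending only on~$s$, with $\chi\equiv 1$ on $]-r_1,r_1[$. Then $v=\chi u$ lies in $C^\infty_0(Z_{r_2},\C^n)$ and still vanishes to infinite order along $\{0\}\times S^1$, so Proposition~\ref{prop:semilocalCarleman} applies. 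Expanding $\Delta v=\chi\Delta u+2\chi'\partial_s u+\chi''u$, using $|a+b+c|^2\le 3(|a|^2+|b|^2+|c|^2)$, inserting the pointwise bound~\eqref{eq:inequality} on $|\Delta u|^2$, and applying the Fubini identity above, one gets
\[
\int_{Z_{r_2}}\!|s|^{-2\alpha}|\Delta v|^2 \;\le\; C_0 M\!\int_{Z_{r_2}}\!|s|^{-2\alpha}\chi^2\bigl(|u|^2+|\nabla u|^2\bigr) + C_1\, r_1^{-2\alpha},
\]
where the remainder $C_1 r_1^{-2\alpha}$ collects all the cutoff contributions, which are supported in the annulus $Z_{r_2}\setminus Z_{r_1}$ and bounded there by $r_1^{-2\alpha}$ times an $\alpha$-independent constant depending only on $u$ and $\chi$. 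Combining this with the Carleman inequality~\eqref{eq:Carleman} and the elementary bound $|\nabla v|^2\ge\tfrac12\chi^2|\nabla u|^2-|\chi'|^2|u|^2$ yields an estimate of the shape
\[
C_0 M c\, r_2^2\!\int_{Z_{r_2}}\!|s|^{-2\alpha}\chi^2\bigl(|u|^2+|\nabla u|^2\bigr) + C_2\, r_1^{-2\alpha} \;\ge\; \tfrac12\!\int_{Z_{r_2}}\!|s|^{-2\alpha}\chi^2\bigl(|u|^2+|\nabla u|^2\bigr).
\]

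I would then choose $r_2$ small enough that $C_0 M c\, r_2^2\le \tfrac14$, which absorbs the first left-hand term on the right, and restrict the remaining integral to $Z_{r_1/2}$, where $\chi\equiv 1$ and $|s|^{-2\alpha}\ge (r_1/2)^{-2\alpha}$. This gives
\[
\int_{Z_{r_1/2}}\bigl(|u|^2+|\nabla u|^2\bigr)\;\le\; C_3\cdot 2^{-2\alpha}.
\]
Letting $\alpha\to\infty$ forces $u\equiv 0$ on $Z_{r_1/2}$, proving the local claim.

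A standard connectedness argument then concludes. The set $E\subset\,]-h,h[$ of points $s_0$ for which $u$ vanishes on some neighbourhood of $\{s_0\}\times S^1$ is open by definition and contains $0$ by hypothesis. It is also closed in $]-h,h[$: if $s_n\in E$ and $s_n\to s_\infty$, then $u$ and all its derivatives vanish on each $\{s_n\}\times S^1$, hence by continuity on $\{s_\infty\}\times S^1$, and the local claim re-applies after translating $s_\infty$ to $0$. By connectedness of $]-h,h[$ one gets $E=\,]-h,h[$, whence $u\equiv 0$ on $Z_h$. I expect the main obstacle to lie precisely in the non-local term of~\eqref{eq:inequality}: the Fubini identity above is the linchpin that lets Aronszajn's scheme go through, and it is exactly what dictates that the Carleman weight must depend only on the $s$-variable.
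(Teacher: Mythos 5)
Your proof is correct and follows essentially the same route as the paper's: a cutoff in the $s$-variable, the semi-local Carleman inequality of Proposition~\ref{prop:semilocalCarleman}, the Fubini observation that the weight $|s|^{-2\alpha}$ neutralizes the non-local term, absorption of the main term by taking the cylinder radius small relative to $M$ and $c$, and then letting $\alpha\to\infty$ to kill the annular error, followed by connectedness in $s$. The only cosmetic difference is in the last step, where the paper concludes via Carleman's pointwise contradiction argument at a hypothetical non-vanishing point, while you compare the weights $(r_1/2)^{-2\alpha}$ and $r_1^{-2\alpha}$ directly; the two are equivalent.
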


\begin{proof} It is enough to prove that $u$ vanishes in a
neighbourhood of $\{0\}\times S^1$. The conclusion then follows by a
connectedness argument on $]-h,h[$. Let $0<r<1/\sqrt{(2\pi+1)cM}$ be
fixed, where $c>0$ is the constant in~\eqref{eq:Carleman}. 
Let $\varphi : ]-r,r[\to [0,1]$ be
a smooth function equal to $1$ for $|s|\le r/3$ and equal to $0$ for
$|s|\ge 2r/3$. Let $u_1(s,\theta):=\varphi(s) u(s,\theta)$. We have 
\begin{eqnarray*}
\lefteqn{\int_{Z_{r/3}} |s|^{-2\alpha} \Big[
|u(s,\theta)|^2+ |\nabla u(s,\theta)|^2 + \int_{S^1}|u(s,\tau)|^2 \,
d\tau \Big] dsd\theta } \\
&\le & (2\pi+1) \int_{Z_{r/3}} |s|^{-2\alpha} \Big[
|u(s,\theta)|^2+ |\nabla u(s,\theta)|^2 \Big] dsd\theta \\
&\le & (2\pi+1) \int_{Z_r} |s|^{-2\alpha} \big(
|u_1|^2+ |\nabla u_1|^2 \big)  \\
&\le & (2\pi+1) cr^2 \int_{Z_r} |s|^{-2\alpha} |\Delta u_1|^2 \\
&= & (2\pi+1) cr^2 \int_{Z_{r/3}} |s|^{-2\alpha} |\Delta u|^2 +
(2\pi+1) cr^2 \int_{Z_r\setminus Z_{r/3}} |s|^{-2\alpha} |\Delta
u_1|^2 \\
&\le & (2\pi+1) cr^2M \int_{Z_{r/3}} |s|^{-2\alpha} \Big[
|u|^2+ |\nabla u|^2 + \int_{S^1}|u|^2\Big] \\ 
& & + \ (2\pi+1) cr^2 \int_{Z_r\setminus Z_{r/3}} |s|^{-2\alpha} |\Delta
u_1|^2.
\end{eqnarray*}
The third inequality follows from Proposition~\ref{prop:semilocalCarleman}, for $\alpha\ge \alpha_0$. It follows that 
\begin{eqnarray*}
\int_{Z_{r/3}} |s|^{-2\alpha} \Big[
|u|^2+ |\nabla u|^2 + \int_{S^1}|u|^2\Big]
& \le & C \int_{Z_r\setminus Z_{r/3}} |s|^{-2\alpha} |\Delta
u_1|^2 \\
& \le & \frac C {(r/3)^{2\alpha}} \int_{Z_r\setminus Z_{r/3}} |\Delta
u_1|^2,
\end{eqnarray*}
with $C=(2\pi+1) cr^2/\big(1-(2\pi+1) cr^2M\big)$. We claim that
$u\equiv 0$ on $Z_{r/3}$. Following Carleman~\cite{Ca}, we assume this to
be false: there exists
$(s_0,\theta_0)\in Z_{r/3}$ such that $u(s_0,\theta_0)\neq 0$. Hence
there exists a constant $k>0$ (depending on $u$, but not on $\alpha$)
such that 
$$
\frac k {|s_0|^{2\alpha}}\le \int_{Z_{r/3}} |s|^{-2\alpha} |u|^2
$$
for all $\alpha\ge \alpha_0$. In view of the above, we obtain 
$$
0< k \le C \frac {|s_0|^{2\alpha} }  {(r/3)^{2\alpha}} 
\int_{Z_r\setminus Z_{r/3}} |\Delta
u_1|^2. 
$$
Since $|s_0|<r/3$, we obtain a contradiction as $\alpha\to \infty$. 
\end{proof}

In the next statement we denote $I_h:=]-h,h[$ for $h>0$. 

\begin{proposition} \label{prop:vanish}
Let $h>0$ and $u:Z_h\to \C^n$,
$\lambda:I_h\to \R^m$ be $C^\infty$-functions satisfying
\begin{equation} \label{eq:eqCn}
\left\{\begin{array}{rcl} 
\p_s u + J(s,\theta) \p_\theta u + C(s,\theta) u + D(s,\theta) \lambda
& = & 0, \\
\p_s \lambda + \int_{S^1} E(s,\theta) u(s,\theta) \, d\theta +
F(s)\lambda & = & 0, 
\end{array}\right.
\end{equation}
with $C,D,E,F$ of class $C^1$, $J$ of class $C^\infty$ and
$J^2=-\one$. Assume there exists a nonempty open set $\cU\subset Z_h$
such that $(u(s,\theta),\lambda(s))=(0,0)$ for all $(s,\theta)\in \cU$. 
Then $u\equiv 0$ on $Z_h$ and $\lambda\equiv 0$ on $I_h$. 
\end{proposition}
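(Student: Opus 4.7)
The plan is to apply Theorem~\ref{thm:inequality} to the combined function $U(s,\theta):=(u(s,\theta),\widetilde{\lambda}(s,\theta))$, where $\widetilde{\lambda}(s,\theta):=\lambda(s)$, with values in $\C^{n+m}$. The two preparatory tasks are: (a) propagating the vanishing of $(u,\lambda)$ from $\cU$ to a full slice $\{s_0\}\times S^1$, and (b) verifying that $U$ satisfies a pointwise inequality of the form required by Theorem~\ref{thm:inequality}.

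For (a), let $\cU'\subset I_h$ denote the open $s$-projection of $\cU$ and pick a connected component $I'\subset\cU'$, which is an open subinterval. Since $\lambda$ depends only on $s$ and vanishes on $\cU$, we have $\lambda\equiv 0$ on $I'$. The first equation of~\eqref{eq:eqCn} then reduces on $I'\times S^1$ to the pseudoholomorphic equation $\p_s u+J\p_\theta u+Cu=0$, and a short computation using $J^2=-\one$ and the commutation of mixed partials gives $|\Delta u|^2\le M(|u|^2+|\nabla u|^2)$. Classical Aronszajn unique continuation then applies on the connected set $I'\times S^1$, on which $u$ already vanishes on the nonempty open set $\cU\cap(I'\times S^1)$, and yields $u\equiv 0$ on $I'\times S^1$. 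Fixing any $s_0\in I'$, both $u$ and $\lambda$, together with all their derivatives, vanish on $\{s_0\}\times S^1$.

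For (b), differentiating the first equation of~\eqref{eq:eqCn} twice, substituting $\p_\theta u=J(\p_s u+Cu+D\lambda)$ (obtained from the equation itself via $J^2=-\one$), and using the commutation of mixed partials, one expresses $\Delta u$ as a linear combination of $u$, $\nabla u$, $\lambda$ and $\dot\lambda$, whence
\[
|\Delta u|^2\le M_1\bigl(|u|^2+|\nabla u|^2+|\lambda|^2+\int_{S^1}|u|^2\,d\tau\bigr).
\]
For the $\widetilde{\lambda}$-component one has $\Delta\widetilde{\lambda}=\ddot\lambda$; differentiating the second equation of~\eqref{eq:eqCn} in $s$ produces a term $\int_{S^1}E\,\p_s u\,d\theta$, in which I substitute $\p_s u=-J\p_\theta u-Cu-D\lambda$ and integrate the resulting $\int_{S^1}EJ\p_\theta u\,d\theta$ by parts on $S^1$ (legitimately, since $E$ is $C^1$ and $J$ is $C^\infty$). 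After this manoeuvre,
\[
|\ddot\lambda|^2\le M_2\bigl(|\lambda|^2+|\dot\lambda|^2+\int_{S^1}|u|^2\,d\tau\bigr).
\]
Since $\widetilde{\lambda}$ is $\theta$-independent, $|\lambda|^2$ is absorbed into $\int_{S^1}|U|^2\,d\tau$ and $|\dot\lambda|^2$ into $|\nabla U|^2$, and the combined inequality $|\Delta U|^2\le M\bigl(|U|^2+|\nabla U|^2+\int_{S^1}|U(s,\tau)|^2\,d\tau\bigr)$ follows.

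Finally, Theorem~\ref{thm:inequality} applied on a small $Z_r\subset Z_h$ centered at $s_0$ (after a translation to place $s_0$ at the origin) gives $U\equiv 0$ on $Z_r$. The conclusion on all of $Z_h$ then follows from a standard connectedness argument: the set of $s\in I_h$ at which $U$ and all its derivatives vanish on $\{s\}\times S^1$ is nonempty by (a), closed by continuity, and open by re-applying Theorem~\ref{thm:inequality} at each such $s$. The main technical obstacle is the $\ddot\lambda$ estimate: a naive bound would leave a pointwise $\int_{S^1}|\p_\theta u|^2\,d\theta$ on the right-hand side, which is not permitted by the form of Theorem~\ref{thm:inequality}; eliminating this term genuinely requires the $\theta$-integration by parts above, and this is where the $S^1$-periodicity of $\theta$ is decisively used.
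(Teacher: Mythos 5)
Your argument is correct and follows essentially the same route as the paper's proof: propagate the vanishing of $(u,\lambda)$ to infinite order along a slice $\{s_0\}\times S^1$, derive a second-order integro-differential inequality for the combined function $(u,\widetilde{\lambda})$ --- using exactly the $\theta$-integration by parts that the paper performs (there it appears as the term $\p_\theta B\, i$ in $A_4$) to avoid an inadmissible $\int_{S^1}|\p_\theta u|^2$ on the right-hand side --- and then invoke Theorem~\ref{thm:inequality} together with a connectedness argument. The only differences are presentational: the paper first normalizes $J$ to $i$ via a frame $\Psi$ and writes a single first-order system for $U=(u,\lambda,0)$ before applying $\p_s-i\p_\theta$, whereas you compute componentwise with variable $J$, and you use classical Aronszajn where the paper cites the Carleman similarity principle of \cite{FHS} for the preliminary step.
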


\begin{proof} We first notice that, for any $(s,\theta)\in \cU$, there
exists $\eps >0$ such that $u\equiv 0$ on $]s-\eps,s+\eps[\times S^1$
and $\lambda \equiv 0$ on $]s-\eps,s+\eps[$. Indeed, choose $\eps>0$
small enough such that 
$]s-\eps,s+\eps[\times]\theta-\eps,\theta+\eps[\subset \cU$. The
condition on $\lambda$ follows then from the hypothesis. On the other
hand, $u$ satisfies $\p_s u +J\p_\theta u +C u=0$ on
$]s-\eps,s+\eps[\times S^1$ and vanishes on this domain by the
standard unique continuation property~\cite[Theorem~2.2,
Proposition~3.1]{FHS}. 
Let us assume without loss of generality that $(0,\theta)\in \cU$ for
some $\theta\in S^1$. The previous discussion shows that the pair
$(u,\lambda)$ vanishes together with all its derivatives along
$\{0\}\times S^1$.

Let $i$ denote the standard complex structure on
$\C^n$. We choose a $C^\infty$ function $\Psi:Z_h\to
\mathrm{GL}_\R(\C^n)$ such that $J\Psi=\Psi i$, and we define
$v:Z_h\to \C^n$ by $u=\Psi v$. Then $v$ is $C^\infty$ and satisfies  
$$
\p_s v + i \p_\theta v + \widetilde C(s,\theta) v + \widetilde
D(s,\theta) \lambda=0, 
$$
with $\widetilde C=\Psi^{-1}(\p_s \Psi + J\p_\theta\Psi + C\Psi)$ and
$\widetilde D=\Psi^{-1}D$. Moreover, $\lambda$ satisfies 
$$
\p_s \lambda + \int_{S^1} \widetilde E(s,\theta) v(s,\theta) \, d\theta +
F(s)\lambda = 0,
$$
with $\widetilde E=E\Psi$. Thus $\widetilde C$, $\widetilde D$, and
$\widetilde E$ are $C^1$. We assume in the sequel 
without loss of generality that $J=i$. 

Denote $U(s,\theta):=(u(s,\theta),\lambda(s),0)\in \C^n\times \C^m$, so
that $U:Z_h\to \C^{n+m}$ satisfies an equation of the form
$$
\p_sU+i\p_\theta U+A(s,\theta)U+\int_{S^1}B(s,\tau)U(s,\tau)\, d\tau =0,
$$
for some $A,B$ of class $C^1$. Applying $\p_s$ and $-i\p_\theta$ to
this equation, summing, substituting $\p_s U$ from the equation, and
integrating once by parts with respect to $\theta$, we obtain 
$$
\Delta U + A_1 U + A_2 \p_s U + A_3 \p_\theta U + \int_{S^1}
A_4(s,\tau) U(s,\tau)\, d\tau =0. 
$$
Here $A_j$, $j=1,\dots,4$ are $C^0$ and given by $A_1=\p_s A
-i\p_\theta A$, $A_2=A$, $A_3=-iA$, $A_4=\p_s B-BA+\p_\theta B i
-B\int_{S^1}B$. By restricting to a smaller cylinder $Z_{h'}$, $h'<h$
so that the $A_j$ are bounded pointwise by some constant $K>0$, we
obtain 
\begin{eqnarray*}
|\Delta U|^2 & \le & 4K \Big[ |U|^2 + |\nabla U|^2 +
\big(\int_{S^1}|U|\big)^2\Big] \\
& \le & 8\pi K \Big[ |U|^2 + |\nabla U|^2 +
\int_{S^1}|U|^2\Big].
\end{eqnarray*}
The conclusion follows from Theorem~\ref{thm:inequality}.
\end{proof}

\begin{remark} \label{rmk:vanish} 
Assuming that the coefficients $C,D,E,F$ in~\eqref{eq:eqCn} are
$C^\infty$, the conclusion of Proposition~\ref{prop:vanish} holds
under the assumption that $\lambda(0)=0$ and $u(0,\cdot)\equiv
0$. Indeed, by successive differentiation in~\eqref{eq:eqCn}, we
obtain that the pair $(u,\lambda)$ vanishes together with all its 
derivatives along $\{0\}\times S^1$. 
\end{remark}

\begin{proposition}[Unique continuation] \label{prop:unique} 
  Let $h>0$ and $u_i:Z_h \to \widehat W$, $\lambda_i:I_h\to \Lambda$, $i=0,1$ be smooth functions satisfying
  equations~(\ref{eq:Floer1par}-\ref{eq:Floer2par}), i.e. 
\begin{eqnarray*}  
 \p_s u + J_{\lambda(s)}^\theta (\p_\theta u - 
X_{H_{\lambda(s)}}^\theta (u)) & = & 0, \\  
 \dot \lambda (s) - \int_{S^1} \vec \nabla_\lambda 
H(\theta,u(s,\theta),\lambda(s)) d\theta & = & 0. 
\end{eqnarray*} 
If $(u_0,\lambda_0)$ and $(u_1,\lambda_1)$ coincide on some
nonempty open set $\cU\subset Z_h$, then they coincide on
$Z_h$. 
\end{proposition}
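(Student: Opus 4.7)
The plan is to show that the set
$$
A:=\{s\in I_h\,:\, u_0(s,\cdot)\equiv u_1(s,\cdot)\text{ and }\lambda_0(s)=\lambda_1(s)\}
$$
equals $I_h$ by proving it is nonempty, closed, and open in the connected interval $I_h$. Closedness follows from continuity. The set $A$ is nonempty because $\cU$ is open: for any $(s_*,\theta_*)\in\cU$ one can, as at the beginning of the proof of Proposition~\ref{prop:vanish}, use the standard unique-continuation property for the $\p_s+J\p_\theta$-part of~(\ref{eq:Floer1par}) on the slice $\{s_*\}\times S^1$ to conclude $u_0(s_*,\cdot)\equiv u_1(s_*,\cdot)$, while $\lambda_0(s_*)=\lambda_1(s_*)$ is immediate from the hypothesis. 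The heart of the proof is therefore openness of $A$.

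Fix $s_0\in A$, and assume, after first sliding $s_0$ along $A$ within $I_h$ if necessary, that some neighbourhood of $\{s_0\}\times S^1$ meets $\cU$. Choose a chart on $\Lambda$ near $\lambda_0(s_0)$ and a unitary trivialization of $u_0^*T\widehat W$ over a strip $Z_{s_0,\eps}:=]s_0-\eps,s_0+\eps[\times S^1$ (available by compactness of $S^1$). Using the exponential map of a fixed auxiliary metric on $\widehat W$, define $\zeta(s,\theta)\in\C^n$ by $u_1(s,\theta)=\exp_{u_0(s,\theta)}(\zeta(s,\theta))$ and set $\ell(s):=\lambda_1(s)-\lambda_0(s)$ in the chart on $\Lambda$. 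Subtracting the parametrized Floer equations for $(u_0,\lambda_0)$ and $(u_1,\lambda_1)$ and Taylor-expanding in $(\zeta,\ell)$ yields on $Z_{s_0,\eps}$ a system of exactly the form~\eqref{eq:eqCn}, with $J$ a smooth almost complex structure (the pullback of $J_{\lambda_0(s)}^\theta$ to the trivialization, so that $J^2=-\one$) and $C,D,E,F$ of class $C^1$; moreover $(\zeta,\ell)$ vanishes on the nonempty open subset $\cU\cap Z_{s_0,\eps}$ of $Z_{s_0,\eps}$.

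Proposition~\ref{prop:vanish}, applied after translating $s_0$ to the origin, then forces $(\zeta,\ell)\equiv 0$ on $Z_{s_0,\eps}$, that is $u_0\equiv u_1$ and $\lambda_0=\lambda_1$ on this strip; in particular a neighbourhood of $s_0$ lies in $A$, which proves openness and completes the argument. The main technical point, and the expected source of difficulty, is to verify carefully that the local Taylor expansion of the exponential-map difference and of the Hamiltonian and almost-complex-structure terms really assembles into a single globally defined system of the precise form required by~\eqref{eq:eqCn} over $\theta\in S^1$; this is standard but mildly delicate, and relies on compactness of $S^1$ together with the invariance of the structural form of the linearized Floer operator under unitary changes of frame. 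Once this is in place, the proof is a direct invocation of Proposition~\ref{prop:vanish}.
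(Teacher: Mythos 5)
Your strategy is the same as the paper's: reduce the difference of the two solutions to a linear integro-differential system of the form \eqref{eq:eqCn}, invoke Proposition~\ref{prop:vanish}, and run an open--closed argument on $I_h$ for the set $A$ of slices where the solutions agree. The initialization and the linearization are fine, modulo the slip that unique continuation for the Floer equation is a two-dimensional statement: what one actually does is observe that $\lambda_0=\lambda_1$ on the projection of $\cU$ to $I_h$, so that $u_0$ and $u_1$ solve the \emph{same} Floer equation on the corresponding strip and, coinciding on the open set $\cU$, coincide on that whole strip by \cite[Proposition~3.1]{FHS}.

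The one genuine problem is in the openness step. Proposition~\ref{prop:vanish} needs the difference to vanish on a nonempty \emph{open} subset of the strip around $s_0$, whereas $s_0\in A$ only gives vanishing on the one-dimensional slice $\{s_0\}\times S^1$. Your device of ``sliding $s_0$ along $A$ so that a neighbourhood of $\{s_0\}\times S^1$ meets $\cU$'' is available only for $s_0$ in the closure of the projection of $\cU$; as written it shows that $A$ contains an open neighbourhood of that closure, not that $A$ is open, so the open--closed argument does not conclude. The paper's repair is to apply the strip argument only at a boundary point $s_0$ of a connected component of $A$ with nonempty interior: there the difference vanishes on the open set obtained by intersecting the strip with (interior of the component)$\,\times S^1$, Proposition~\ref{prop:vanish} applies, and $s_0$ is absorbed into the interior, which by connectedness of $I_h$ forces that component to be all of $I_h$. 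Alternatively, since the coefficients of \eqref{eq:eqCn} are smooth in this application, you may invoke Remark~\ref{rmk:vanish}: vanishing of $(u,\lambda)$ on the slice $\{s_0\}\times S^1$ already implies infinite-order vanishing there by successive differentiation of the equation, and Theorem~\ref{thm:inequality} then yields openness at \emph{every} point of $A$. Either repair completes your argument.
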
 

\begin{proof} We can assume without loss of generality that
  $\cU=I_\delta\times I_\eps$, for some $\delta,\eps>0$. Since $\lambda_0 =
  \lambda_1$ on $I_\delta$, it follows that $u_0$ and $u_1$ satisfy the 
  same Floer equation $\p_s u + J^\theta_s (\p_\theta u - X^\theta_s)=0$
  on $I_\delta\times S^1$. Since $u_0$ and $u_1$ coincide on $\cU$, it
  follows by the unique continuation property for the Floer
  equation~\cite[Proposition~3.1]{FHS} that $u_0=u_1$ on
  $I_\delta\times S^1$. 

  Let $I\subset I_h$ be the set of points $s$ such that $u_0=u_1$ on
  $\{s\}\times S^1$, and $\lambda_0(s)=\lambda_1(s)$. Then $I\supset
  I_\delta$ and hence is nonempty. Moreover, it is closed. To 
  prove the Proposition, it is enough to show that $I$ is open. Let
  $s_0\in I$ be a point on the boudary of a connected component of $I$
  with nonempty interior, and denote
  $\gamma:=u_0(s_0,\cdot)=u_1(s_0,\cdot)$. We 
  consider a trivialization of $\gamma^*T\widehat W$ of the form
  $S^1\times \C^n$, and a local chart in $\Lambda$ around
  $\lambda_0(s_0)=\lambda_1(s_0)$, which we 
  identify with $\R^m$. Then, for $s$ close to $s_0$, we can view
  $u_0(s,\cdot)$ and $u_1(s,\cdot)$ as taking values in $\C^n$, and
  similarly $\lambda_0(s)$ and $\lambda_1(s)$ as taking values in
  $\R^m$. The difference $(u,\lambda):=(u_0-u_1,\lambda_0-\lambda_1)$
  then satisfies an equation of the form~\eqref{eq:eqCn} with smooth
  coefficients (the computation is similar to the one in the proof
  of~\cite[Proposition~3.1]{FHS}). Moreover, $(u,\lambda)$ vanishes to
  infinite order along $\{s_0\}\times S^1$. By
  Proposition~\ref{prop:vanish}, we obtain that $(u,\lambda)\equiv 0$
  on a small strip around $\{s_0\}\times S^1$, so that $s_0$ belongs
  to the interior of $I$. 
\end{proof} 

\begin{remark} The conclusion of Proposition~\ref{prop:unique} holds
  under the assumption that $u_0(s_0,\cdot)=u_1(s_0,\cdot)$ and
  $\lambda_0(s_0)=\lambda_1(s_0)$ for some $s_0\in\R$ (use
  Remark~\ref{rmk:vanish}). By successive
  differentiation, this hypothesis implies that $(u_0,\lambda_0)$ and
  $(u_1,\lambda_1)$ coincide together with all their derivatives along
  $\{s_0\}\times S^1$. 
\end{remark}

%%%%%%%%%%%%%%%%%%%%%%%%%%%%%%%%%%%%%%%%%%%%%% 
%%%%%%%%%%%%%%%%%%%%%%%%%%%%%%%%%%%%%%%%%%%%%% 
%%%%%%%%%%%%%%%% Section 4 %%%%%%%%%%%%%%%%%%% 
%%%%%%%%%%%%%%%%%%%%%%%%%%%%%%%%%%%%%%%%%%%%%% 
%%%%%%%%%%%%%%%%%%%%%%%%%%%%%%%%%%%%%%%%%%%%%% 

\section{Transversality for the parametrized Floer equation} \label{sec:param-transv}

Let $H\in\cH_{\Lambda,\mathrm{reg}}$. A pair $(J,g)\in\cJ_\Lambda$ is 
  called {\bf regular for \boldmath$H$} if the operator $D_{(u,\lambda)}$ is 
  surjective for any solution $(u,\lambda)$ 
  of~(\ref{eq:Floer1par}-\ref{eq:asymptoticpar}). We denote the space 
  of such pairs by $\cJ_{\Lambda,\mathrm{reg}}(H)$. We prove in this
  section part~\ref{item:INTROb-notS1} of Theorem~B as the following
  statement. 

\begin{theorem} \label{thm:Jreg}
There exists a subset of second Baire category
$\cH\cJ_{\Lambda,\mathrm{reg}}\subset 
\cH_{\Lambda,\mathrm{reg}}\times \cJ_\Lambda$ such that $(J,g)\in
\cJ_{\Lambda,\mathrm{reg}}(H)$ whenever $(H,J,g)\in
\cH\cJ_{\Lambda,\mathrm{reg}}$. 
\end{theorem}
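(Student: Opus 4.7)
My plan is to prove Theorem~\ref{thm:Jreg} by the standard universal moduli space argument combined with the Sard--Smale theorem, adapted to the parametrized setup. Fix asymptotes $(\op,\up)$ and an integer $r\ge 2$. Working with $C^r$-regular data, introduce the universal moduli space
$$
\cM^{\mathrm{univ},r}(\op,\up) := \big\{\, ((u,\lambda),H,J,g) \,:\, (u,\lambda) \text{ solves } (\ref{eq:Floer1par})\text{--}(\ref{eq:asymptoticpar}) \text{ with asymptotes } (\op,\up)\,\big\},
$$
equipped with the natural projection $\pi$ to $\cH^r_{\Lambda,\mathrm{reg}}\times\cJ^r_\Lambda$. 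A regular value of $\pi$ is exactly a triple $(H,J,g)$ for which $D_{(u,\lambda)}$ is surjective at every solution with the chosen asymptotes. Once I show that $\cM^{\mathrm{univ},r}(\op,\up)$ is a smooth Banach manifold and $\pi$ is $C^{r-1}$ Fredholm (of index equal to that of $D_{(u,\lambda)}$, which is finite by Theorem~\ref{thm:Fredholm}), the Sard--Smale theorem yields a set of second category of regular values. Intersecting over the countably many pairs $(\op,\up)$ --- countable because $\cP(H)$ is discrete for $H\in\cH^r_{\Lambda,\mathrm{reg}}$ by Proposition~\ref{prop:Hreg} --- and then performing the diagonal Taubes argument (as in the last paragraph of the proof of Proposition~\ref{prop:Hreg}) passes from $C^r$ to $C^\infty$.

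The crux is showing that $\cM^{\mathrm{univ},r}(\op,\up)$ is a Banach manifold, which reduces to surjectivity at every solution of the universal linearized operator
$$
\cD^{\mathrm{univ}}(\zeta,\ell,h,Y,k) \;=\; D_{(u,\lambda)}(\zeta,\ell) \;+\; \cL_{(u,\lambda)}(h,Y,k),
$$
where $\cL_{(u,\lambda)}$ records the variation of the equations with respect to $(H,J,g)$. Since $D_{(u,\lambda)}$ is Fredholm, the range of $\cD^{\mathrm{univ}}$ is closed, and it suffices to prove triviality of the cokernel. Dually, I must show that any $\eta=(\eta_1,\eta_2)\in\cL^q$ (with $1/p+1/q=1$) which pairs to zero against the image of $\cD^{\mathrm{univ}}$ vanishes identically. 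Setting $(h,Y,k)=0$ first yields $D^*_{(u,\lambda)}\eta=0$, i.e.\ $\eta$ solves an integro-differential adjoint system to which Proposition~\ref{prop:vanish} (via Theorem~\ref{thm:inequality}) applies. Consequently, it is enough to prove that $\eta$ vanishes on some non-empty open subset of $\R\times S^1$.

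To locate such a set, I pick a point $(s_*,\theta_*)\in\R\times S^1$ where perturbations act freely. Since $(\op,\up)$ are non-degenerate and hence isolated in $\cP(H)$, and since $(u,\lambda)$ is not a constant trajectory (otherwise asymptotes would coincide), the unique continuation statement of Proposition~\ref{prop:unique} forces $(\p_\theta u-X_{H_\lambda})(s_*,\theta_*)\neq 0$ on a dense set of times $s_*$, and moreover the orbit $u(s_*,\cdot)$ cannot be contained in the projection of $\cP(H)$. A standard argument (as in \cite{FHS}, now exploiting that $\cL_{(u,\lambda)}$ picks up $C^r$-variations of $H$ localised in $(\theta,x,\lambda)$-space as well as of $J$ and $g$) produces, at any such $(s_*,\theta_*)$, a perturbation family whose image spans a neighbourhood of that point in $u^*T\widehat W\oplus\lambda^*T\Lambda$; variations of $H$ in the $\lambda$-variable and of $g$ handle the second component $\eta_2$ analogously. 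This forces $\eta\equiv 0$ near $(s_*,\theta_*)$, and unique continuation concludes. The main obstacle I expect is precisely this last step --- verifying that enough independent perturbations are available at $(s_*,\theta_*)$ to span the cokernel direction and that this point can be chosen away from the asymptotes --- since the coupling between the Floer equation and the finite-dimensional gradient equation, together with the non-local integral term, complicates the usual somewhere-injectivity argument.
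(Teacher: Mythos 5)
Your overall framework (universal moduli space, Sard--Smale, Taubes's diagonal argument, reduction of surjectivity to vanishing of an annihilator $(\eta,k)\in\cL^q$ on an open set followed by unique continuation via Proposition~\ref{prop:vanish}) is exactly the paper's strategy. But your case analysis has genuine gaps. First, you dismiss constant trajectories on the grounds that ``asymptotes would coincide'' --- but $\op=\up$ is allowed, constant solutions do occur, and they must be handled; the paper's Case~1 disposes of them by invoking Lemma~\ref{lem:constant} (the linearized operator is already bijective there), which costs nothing but cannot simply be skipped. Second, and more seriously, your mechanism for producing perturbations hinges on $(\p_\theta u - X_{H_\lambda})(s_*,\theta_*)\neq 0$, which by the Floer equation equals $J\,\p_s u$ and therefore vanishes identically whenever $\p_s u\equiv 0$. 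The case $\p_s u\equiv 0$ with $\dot\lambda\not\equiv 0$ is not excluded by nondegeneracy of the asymptotes, and in that case no perturbation of $J$ and no localized-in-$x$ perturbation of $H$ acts on the $u$-equation; this is precisely why the paper includes the metric $g$ on $\Lambda$ among the perturbation data (its Case~2 kills $k$ with a variation $A$ of $g$ supported near $\lambda(s_0)$, and then kills $\eta$ with a $\lambda$-localized Hamiltonian perturbation). Your proposal never explains what role $g$ plays, so this case would defeat it.

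Finally, the step you flag as ``the main obstacle'' --- the somewhere-injectivity statement needed to localize the perturbations --- is indeed the technical heart of the argument and is not something that follows from a ``standard argument as in [FHS]''. The integral coupling forces injectivity to be formulated for the pair $U=(u,\lambda)$ (the point $U(s,\theta)$ must not be revisited by $U(\cdot,\theta)$ at other times $s'$), and the paper needs a dedicated unique-continuation enhancement (Lemma~\ref{lem:unique}) plus a Baire/Sard covering argument (Proposition~\ref{prop:regularpts}) to get density of such regular points --- and even then only in the set $\{\p_s u\neq 0\}$, not in $\{\p_s U\neq(0,0)\}$, a restriction that shapes the whole case split. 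As written, your proposal identifies the right skeleton but leaves unproved exactly the statements on which the proof stands or falls, and in two places asserts simplifications that are false.
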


\begin{remark} In general, it is not possible to first fix $H\in
  \cH_{\Lambda,\mathrm{reg}}$ and then prove that
  $\cJ_{\Lambda,\mathrm{reg}}(H)$ is of the second Baire category in
  $\cJ_\Lambda$, as the following example shows. Consider a
  Hamiltonian of the form
  $H(\theta,x,\lambda)=K(\theta,x)+g(x)f(\lambda)$. Assume $K$ has
  nondegenerate $1$-periodic orbits with disjoint geometric
  images, fix a regular almost complex structure $J$ for $K$,
  consider a Floer trajectory $u$ for $(K,J)$ with asymptotes $\og$,
  $\ug$, let $g\equiv 1$ near $\og$, $g\equiv -1$ near $\ug$, and let
  $\lambda_0$ be a minimum of $f$. If $\dim\,\Lambda >
  \mathrm{ind}(u)$, then $(u,\lambda_0)$ is a parametrized
  Floer trajectory of negative index, independently of the choice of
  Riemannian metric $g$ on $\Lambda$. Moreover, since $u$ survives
  under small perturbations of $J$, the parametrized trajectory
  $(u,\lambda_0)$ will survive under small perturbations of the pair
  $(J,g)$. This shows that the latter cannot be chosen generically to
  be regular. 

  This phenomenon is similar to the one arising in the construction of
the continuation morphism in Morse homology from a regular pair
$(f_-,g_-)$ to a regular pair $(f_+,g_+)$. In that situation, we again
\emph{cannot} first fix
the homotopy $(f_t)$ and then choose the homotopy $(g_t)$
generically. One has to choose the \emph{pair} $(f_t,g_t)$
generically. An explicit example is provided by the homotopy $f_t:\R\to \R$ 
given by $f_t(x)=-\frac 1 2 tx^2$ for $t\in 
[-1,1]$, $x\in \R$. In this case the constant trajectory at $x=0$ has
index $-1$ and exists for any choice of metric.  
\end{remark}

Let $(u,\lambda)\in \cM(\op,\up)$, $\op=(\og,\olambda)$,
$\up=(\ug,\ulambda)$. We define the set of {\bf regular points for
\boldmath$(u,\lambda)$} by
$$
R(u,\lambda):=\left\{(s,\theta)\in \R\times S^1 \, : \,
\left\{\begin{array}{l}
(\p_s u(s,\theta),\p_s \lambda(s))\neq (0,0),\\
(u(s,\theta),\lambda(s))\neq (\og(\theta),\olambda), \
(\ug(\theta),\ulambda), \\
(u(s,\theta),\lambda(s))\notin
(u(\cdot,\theta),\lambda(\cdot))(\R\setminus \{s\})
\end{array}\right. \right\}.
$$
%We define the set of {\bf bi-regular points for
%\boldmath$(u,\lambda)$} by 
%$$
%\widetilde R(u,\lambda) := \left\{(s,\theta)\in  R(u,\lambda) \ : \ 
%\p_su(s,\theta)\neq 0, \, \p_s\lambda(s)\neq 0 \right\}.
%$$

\noindent {\bf Notation.} In the following we denote
$U(s,\theta)=(u(s,\theta),\lambda(s))$ and assume that $U$ satisfies
equations (\ref{eq:Floer1par}-\ref{eq:Floer2par}). We also denote
$R(U):=R(u,\lambda)$.  

\begin{proposition}[Regular points] \label{prop:regularpts} 
Assume $\p_s U \not\equiv (0,0)$. Then :

(i) The set $\{ (s,\theta)  : \p_s U(s,\theta) \neq (0,0) \}$ is open and dense in $\R \times S^1$. 
Moreover, given $s\in \R$, there exists $\theta\in S^1$ such that $\p_s U(s,\theta)\neq (0,0)$.

(ii) The set $R(u,\lambda)$ is open. 

(iii) If $\p_s u\equiv 0$, then $R(u,\lambda)$ is equal to $\R\times S^1$.
If $\p_s u\not\equiv 0$, then $R(u,\lambda)$ is dense in the open 
set $\{(s,\theta)\, : \, \p_s u(s,\theta)\neq 0\}$.  
\end{proposition}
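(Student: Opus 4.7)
The plan is to prove each of the three assertions in turn, the core tool being the unique continuation result Proposition~\ref{prop:unique} together with the remark following it.

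For (i), openness of $\{\p_s U\neq(0,0)\}$ is immediate by continuity. For density I would argue by contradiction: if $\p_sU$ vanishes on a nonempty open set $\cU$, shrink $\cU$ to a product $I\times V$, so that $U$ is independent of $s\in I$ for $\theta\in V$. Since~(\ref{eq:Floer1par}-\ref{eq:Floer2par}) is $s$-translation invariant, the shift $U_\epsilon(s,\theta):=U(s+\epsilon,\theta)$ is also a solution and satisfies $U_\epsilon=U$ on the nonempty open set $(I\cap(I-\epsilon))\times V$ for every small $\epsilon>0$. Proposition~\ref{prop:unique} then yields $U_\epsilon\equiv U$ on $\R\times S^1$, so $U$ is $s$-translation invariant, contradicting $\p_sU\not\equiv(0,0)$. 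For the pointwise statement, if $\p_sU(s_0,\cdot)\equiv(0,0)$ then evaluating~(\ref{eq:Floer1par}-\ref{eq:Floer2par}) at $s_0$ shows $(u(s_0,\cdot),\lambda(s_0))\in\cP(H)$; comparing $(u,\lambda)$ with the corresponding constant trajectory, which agrees with it on $\{s_0\}\times S^1$, the remark following Proposition~\ref{prop:unique} forces equality on $\R\times S^1$, again contradicting $\p_sU\not\equiv(0,0)$.

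For (ii), the first two conditions in the definition of $R(u,\lambda)$ are clearly open. For the third, I would fix $(s_0,\theta_0)\in R(u,\lambda)$ and split the $s'$-axis in three zones: condition~2 together with the asymptotic convergence~(\ref{eq:asymptoticpar}) separates $(u(s,\theta),\lambda(s))$ uniformly from $(u(s',\theta),\lambda(s'))$ for $|s'|$ large and $(s,\theta)$ in a neighborhood of $(s_0,\theta_0)$, producing $T>0$; on the compact region $\delta\leq|s'-s_0|\leq T$, continuity plus the noncoincidence at $(s_0,\theta_0)$ supply uniform separation after shrinking the neighborhood; on $|s'-s_0|<\delta$, the open condition~1 yields local injectivity of $s'\mapsto(u(s',\theta),\lambda(s'))$ and excludes $s'\neq s$ with the same image.

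For (iii), when $\p_s u\equiv 0$ write $u(s,\theta)=\gamma(\theta)$; then~(\ref{eq:Floer1par}) says $\dot\gamma=X_{H_{\lambda(s)}}(\gamma)$ for every $s$, while~(\ref{eq:Floer2parbis}) becomes the autonomous ODE $\dot\lambda=\vec\nabla F_\gamma(\lambda)$. Since $\p_sU\not\equiv 0$, $\lambda$ is nonconstant, and uniqueness of ODE solutions at any zero of $\dot\lambda$ shows $\dot\lambda(s)\neq 0$ for all $s\in\R$ (condition~1), $\lambda(s)\neq\olambda,\ulambda$ for all $s\in\R$ (condition~2), and injectivity of $\lambda$ by strict monotonicity of $F_\gamma$ along it (condition~3); hence $R(u,\lambda)=\R\times S^1$. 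When $\p_s u\not\equiv 0$ the set $\{\p_s u\neq 0\}$ is open and condition~1 holds there automatically; an open subset on which condition~2 fails would force $\p_s u\equiv 0$ there, a contradiction. The delicate step is density of condition~3: supposing otherwise, an open $V\subset\{\p_s u\neq 0\}$ is such that for every $(s,\theta)\in V$ there is $s'\neq s$ with $(u(s',\theta),\lambda(s'))=(u(s,\theta),\lambda(s))$. The $\lambda$-coincidence $\lambda(s')=\lambda(s)$ depends only on $s$, so by a continuity and connectedness argument one can choose $s'=s'(s)$ independent of $\theta$ on a subregion. The coincidence $U(s'(s_0),\theta)=U(s_0,\theta)$ then propagates, via Proposition~\ref{prop:unique}, to a global translation symmetry $U(\cdot+\delta,\cdot)\equiv U$ for some $\delta\neq 0$. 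Iterating this periodicity and passing to the asymptotic limits using~(\ref{eq:asymptoticpar}) forces $U$ to be constant in $s$, contradicting $\p_s u\not\equiv 0$.

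The principal obstacle is the final step of case~(iii): extracting from the putative dense family of self-intersections a locally constant $s$-shift on which Proposition~\ref{prop:unique} can be brought to bear. All other ingredients are either purely formal or direct applications of the unique continuation results of Section~\ref{sec:unique}.
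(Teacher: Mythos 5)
Parts (i) and (ii) are essentially correct. For (i) you route the argument through Proposition~\ref{prop:unique} applied to $U$ and its $s$-shift $U_\eps$ (and, for the pointwise statement, through the remark following it, comparing $U$ with the constant trajectory at the critical point $(u(s_0,\cdot),\lambda(s_0))$), whereas the paper applies Proposition~\ref{prop:vanish} and Remark~\ref{rmk:vanish} directly to the linearized equation satisfied by $\p_sU$; the two routes are equivalent in strength. Your (ii) is the same compactness argument as the paper's, phrased with a covering instead of sequences. The case $\p_su\equiv 0$ of (iii) is fine and in fact more detailed than the paper's one-line treatment.

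The final step of (iii) — which you yourself flag as ``the principal obstacle'' — is a genuine gap, and it is precisely the part of the proof that carries the real content. Two things are missing. First, from the hypothesis ``for every $(s,\theta)$ near $(s_0,\theta_0)$ there exists $s'\neq s$ with $U(s',\theta)=U(s,\theta)$'' you cannot conclude that $s'$ can be chosen as a function of $s$ alone (or even as a continuous function of $(s,\theta)$): the $\lambda$-coincidence pins down nothing when $\p_s\lambda\equiv 0$, and even when $\dot\lambda\neq 0$ there may be several candidate values of $s'$, with the one realizing the $u$-coincidence depending on $\theta$. The paper handles this by a Sard argument (to arrange that all preimages of $U(s_0,\theta_0)$ on the slice $\theta=\theta_0$ are non-critical, hence finite in number $s_1,\dots,s_N$) followed by a Baire category argument on the sets $\Sigma_j$ of points whose companion $s'$ lies near $s_j$; only then does one obtain an open set on which the correspondence $(s,\theta)\mapsto(s',\theta)$ is single-valued and smooth. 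Second, and more seriously, even with a smooth correspondence $U(s,\theta)=U_1(\phi(s,\theta),\theta)$ in hand, you cannot invoke Proposition~\ref{prop:unique}: the reparametrized map $U_1(\phi(\cdot,\cdot),\cdot)$ is not known to be a solution of the system unless $\phi$ is a rigid $s$-translation, which is exactly what needs to be proved. This is the content of the paper's Lemma~\ref{lem:unique} (the analogue of Lemma~4.2 in \cite{FHS}): one substitutes the relation into the Floer equation for $u_1$, uses the Floer equation for $u_0$ to rewrite everything in terms of $\p_su_0$ and $J\p_su_0$, and then uses $\p_su_0\neq 0$ (whence the linear independence of these two vectors) to deduce $\p_s\phi\equiv 1$ and $\p_\theta\phi\equiv 0$. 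This is also the only reason the conclusion of (iii) is density in $\{\p_su\neq 0\}$ rather than in all of $\R\times S^1$. Without these two ingredients your argument does not close.
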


\begin{remark}
If $\p_su \not\equiv 0$ and $\p_s\lambda \equiv 0$, the Proposition implies that 
$R(u,\lambda)$ is dense in $\R\times S^1$. Indeed, $u$ satisfies a 
Floer equation which is independent of $s$ and the open set 
$\{(s,\theta) \, : \, \p_su(s,\theta)\neq 0\}$ is dense in 
$\R\times S^1$~\cite[Lemma~4.1]{FHS}. 
\end{remark}

To prove Proposition~\ref{prop:regularpts}, we need the following
enhancement of Proposition~\ref{prop:unique} (this is the analogue of
Lemma~4.2 in~\cite{FHS}). In the next statement we denote 
$V_h(s,\theta):=\ ]s-h,s+h[ \ \times \ ]\theta-h,\theta+h[ \ \subset \R\times
S^1$ and $I_h(s):= \ ]s-h,s+h[ \ \subset \R$ for $h>0$. 

\begin{lemma} \label{lem:unique} 
Let $U_i=(u_i,\lambda_i)$, $i=0,1$ be smooth functions defined on 
a strip $I_{h_0}\times S^1$, $h_0>0$ and 
  satisfying equations~(\ref{eq:Floer1par}-\ref{eq:Floer2par}) in
  Proposition~\ref{prop:unique}. Assume that 
$$
U_0(s_0,\theta_0)=U_1(s_0,\theta_0), \qquad \p_s u_0(s_0,\theta_0)
\neq 0, \qquad \p_s U_1(s_0,\theta_0)\neq (0,0)
$$
for some $(s_0,\theta_0)\in\R\times S^1$. Assume also 
that, for any $0<h'\le h_0$, there exists $0<h\le h_0$ with the
following property: for any $(s,\theta)\in  
  V_h(s_0,\theta_0)$, there exists $(s',\theta)\in
  V_{h'}(s_0,\theta_0)$ such that  
$$
U_0(s,\theta)=U_1(s',\theta).
$$
Then $U_0=U_1$. 
\end{lemma}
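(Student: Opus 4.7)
My plan is to follow the analogue of~\cite[Lemma~4.2]{FHS} adapted to the parametrized setting: construct a smooth local time-reparametrization $\sigma$ such that $U_0(s,\theta) = U_1(\sigma(s,\theta),\theta)$ on a neighborhood of $(s_0,\theta_0)$, substitute into the parametrized Floer equations to force $\sigma(s,\theta) = s$ locally, and then invoke Proposition~\ref{prop:unique} to propagate the equality to all of $I_{h_0}\times S^1$.

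\textbf{Step 1 (construction of $\sigma$).} The hypothesis $\partial_s U_1(s_0,\theta_0)\neq (0,0)$ splits into two overlapping subcases. If $\partial_s u_1(s_0,\theta_0)\neq 0$, I fix a local chart on $\widehat W$ around $u_1(s_0,\theta_0)$ in which some component $(u_1)_j$ has nonvanishing $\partial_{s'}$-derivative at $(s_0,\theta_0)$; the implicit function theorem applied to $(u_1)_j(s',\theta) = (u_0)_j(s,\theta)$ produces a smooth $\sigma(s,\theta)$ near $(s_0,\theta_0)$ with $\sigma(s_0,\theta_0)=s_0$. If instead $\partial_s u_1(s_0,\theta_0)=0$ but $\partial_s\lambda_1(s_0)\neq 0$, the analogous procedure applied to $\lambda_1(s') = \lambda_0(s)$ yields a smooth $\sigma(s)$ depending only on $s$. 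In either subcase, the hypothesis providing, for every $(s,\theta)\in V_h(s_0,\theta_0)$, some $s'\in I_{h'}(s_0)$ with $U_0(s,\theta)=U_1(s',\theta)$, combined with local injectivity of $s'\mapsto U_1(s',\theta)$ inherited from $\partial_s U_1(s_0,\theta_0)\neq (0,0)$, forces this $s'$ to coincide with $\sigma(s,\theta)$. Hence $U_0(s,\theta) = U_1(\sigma(s,\theta),\theta)$ on a neighborhood of $(s_0,\theta_0)$.

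\textbf{Step 2 (the PDE constrains $\sigma$).} Abbreviate $V := \partial_\theta u_1 - X_{H_{\lambda_1}}^\theta(u_1)$ evaluated at $(\sigma(s,\theta),\theta)$ and $J_1 := J_{\lambda_1(\sigma(s,\theta))}^\theta$, so equation~\eqref{eq:Floer1par} for $U_1$ reads $\partial_{s'} u_1 = -J_1 V$. Applying the chain rule to $u_0(s,\theta) = u_1(\sigma(s,\theta),\theta)$, substituting into~\eqref{eq:Floer1par} for $U_0$, and using $J_1^2 = -\one$, I obtain
\begin{equation*}
V\,\partial_\theta\sigma + J_1 V\,(1-\partial_s\sigma) = 0.
\end{equation*}
When $\partial_s u_1(s_0,\theta_0)\neq 0$, we have $V(s_0,\theta_0)\neq 0$, so $V$ and $J_1V$ are linearly independent near $(s_0,\theta_0)$, forcing $\partial_\theta\sigma\equiv 0$ and $\partial_s\sigma\equiv 1$. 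In the remaining subcase $\sigma$ depends only on $s$, so the identity above degenerates; I then pass to equation~\eqref{eq:Floer2par}: differentiating $\lambda_0(s) = \lambda_1(\sigma(s))$ gives $\dot\lambda_0(s) = \dot\lambda_1(\sigma(s))\sigma'(s)$, and the integrands on the right-hand sides of~\eqref{eq:Floer2par} for $U_0$ and $U_1$ agree pointwise because $u_0(s,\theta)=u_1(\sigma(s),\theta)$ and $\lambda_0(s)=\lambda_1(\sigma(s))$, yielding $(\sigma'(s)-1)\dot\lambda_1(\sigma(s)) = 0$; non-vanishing of $\dot\lambda_1$ near $s_0$ forces $\sigma'(s)=1$. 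Combined with $\sigma(s_0,\theta_0) = s_0$, this gives $\sigma(s,\theta) = s$ locally, hence $U_0 = U_1$ on a neighborhood of $(s_0,\theta_0)$.

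\textbf{Step 3 (global propagation).} Since $U_0$ and $U_1$ coincide on a nonempty open subset of $I_{h_0}\times S^1$, Proposition~\ref{prop:unique} yields $U_0 = U_1$ on $I_{h_0}\times S^1$, which is the conclusion of the lemma. The main technical hurdle is Step~2 in the degenerate subcase $V(s_0,\theta_0)=0$: there the Floer equation~\eqref{eq:Floer1par} alone does not pin $\sigma$ down, and one must bring to bear the nonlocal $\lambda$-equation~\eqref{eq:Floer2par}, which is fortunately automatically consistent with the fact that $\sigma$ is forced to depend only on $s$ in this subcase.
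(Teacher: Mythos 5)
Your strategy is essentially the paper's: build a smooth reparametrization via the implicit function theorem from the overlap hypothesis, feed it into the Floer equation, use the linear independence of a nonzero vector and its $J$-image to force the reparametrization to be the identity, and finish with Proposition~\ref{prop:unique}. The one substantive difference is the direction of the composition. The paper inverts the map $G_\theta=(U_1(\cdot,\theta))^{-1}\circ U_0(\cdot,\theta)$ so as to write $U_1=U_0\circ F$ and substitutes into the equation for $u_1$; the resulting constraint is $\partial_s u_0(F)(\partial_s\phi-1)+J\partial_s u_0(F)\,\partial_\theta\phi=0$, and the non-vanishing needed for the linear-independence step is then exactly the hypothesis $\partial_s u_0(s_0,\theta_0)\neq 0$. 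You instead keep $U_0=U_1\circ(\sigma,\mathrm{id})$ and substitute into the equation for $u_0$, so your constraint involves $V=J_1\partial_{s'}u_1$, whose non-vanishing is \emph{not} among the hypotheses; this is what forces your case split.

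Your degenerate subcase ($\partial_s u_1(s_0,\theta_0)=0$, $\dot\lambda_1(s_0)\neq 0$) contains a gap as written: you claim the integrands of~\eqref{eq:Floer2par} for $U_0$ and $U_1$ agree pointwise, but the relation $u_0(s,\theta)=u_1(\sigma(s),\theta)$ is only established for $\theta$ in the small interval around $\theta_0$ supplied by the hypothesis, whereas the integral runs over all of $S^1$; you have no control over $u_0(s,\theta)-u_1(\sigma(s),\theta)$ for $\theta$ outside that interval, so the identity $\dot\lambda_0(s)=\dot\lambda_1(\sigma(s))$ does not follow. Fortunately the subcase is vacuous: once $u_0(s,\theta)=u_1(\sigma(s),\theta)$ holds near $(s_0,\theta_0)$, the chain rule gives $\partial_s u_0(s_0,\theta_0)=\sigma'(s_0)\,\partial_{s'}u_1(s_0,\theta_0)=0$, contradicting the hypothesis $\partial_s u_0(s_0,\theta_0)\neq 0$. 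Replacing your $\lambda$-equation argument by this one-line contradiction (or reversing the composition as the paper does, which removes the subcase altogether) repairs the proof; the non-degenerate case and the final appeal to Proposition~\ref{prop:unique} are correct.
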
 

\begin{remark} 
We could not prove Lemma~\ref{lem:unique} under the more general
assumption $\p_s U_0(s_0,\theta_0)\neq (0,0)$ (instead of $\p_s
u_0(s_0,\theta_0)\neq 0$). This in turn influences the
conclusion of~(iii) in Proposition~\ref{prop:regularpts}: we only show
that $R(u,\lambda)$ is dense in the set $\{(s,\theta)\, : \, \p_s
u(s,\theta)\neq 0\}$.
\end{remark} 

\begin{proof}[Proof of Lemma~\ref{lem:unique}] By
  Proposition~\ref{prop:unique}, it is enough to prove 
  that $U_0=U_1$ on some open neighbourhood of $(s_0,\theta_0)$. 
Let us choose $h'>0$ small enough so that $I_{h'}(s_0)\to \widehat
W\times \Lambda, s\mapsto U_1(s,\theta)$ is an embedding for all
$\theta\in I_{h'}(\theta_0)$. By further
diminishing the corresponding $h>0$ we 
can also assume that $I_h(s_0)\to \widehat
W, s\mapsto u_0(s,\theta)$ is an embedding for all
$\theta\in I_h(\theta_0)$. 

For each $\theta\in I_h(\theta_0)$, we have by assumption 
$U_0(I_h(s_0),\theta)\subset U_1(I_{h'}(s_0),\theta)$. We can
therefore define smooth embeddings $G_\theta:=(U_1(\cdot,\theta))^{-1}\circ 
U_0(\cdot,\theta):I_h(s_0)\to I_{h'}(s_0)$. Moreover, for
$h$ small enough, we have $s_0\in \mathrm{im}(G_\theta)$. Let us
choose $0<h''<h'$ small enough such that
$I_{h''}(s_0)\subset 
\mathrm{im}(G_\theta)$ for all $\theta\in I_h(\theta_0)$. By the
implicit function theorem, we obtain a 
smooth embedding $F_\theta:=(G_\theta)^{-1}:I_{h''}(s_0)\to
I_h(s_0)$. The collection of maps $\{F_\theta\}$ gives rise to the
smooth map $F:V_{h''}(s_0,\theta_0)\to V_h(s_0,\theta_0)$ defined by
$F(s,\theta):=(F_\theta(s),\theta):=(\phi(s,\theta),\theta)$. We have 
$$
U_1(s,\theta)=U_0(\phi(s,\theta),\theta)
$$
for all $(s,\theta)\in V_{h''}(s_0,\theta_0)$. Substituting in the
Floer equation for $u_1$, we obtain 
\begin{eqnarray*}
0 & = & \p_s u_1 + J^\theta_{\lambda_1}(u_1)(\p_\theta u_1 -
X^\theta_{H_{\lambda_1}}(u_1)) \\
& = & \p_s u_0(F) \cdot \p_s \phi \\
& & + \ 
J^\theta_{\lambda_0(F)}(u_0(F))\big(\p_su_0(F)\cdot\p_\theta\phi +
\p_\theta u_0(F) - X^\theta_{H_{\lambda_0(F)}}(u_0(F))\big) \\
& = & \p_s u_0(F) \cdot (\p_s\phi-1) + J^\theta_{\lambda_0(F)}
(u_0(F)) \p_s u_0(F)\cdot \p_\theta\phi.
\end{eqnarray*}
The last equality follows from the Floer equation for $u_0$. 
Since $\p_s u_0 \neq 0$ on
$V_{h}(s_0,\theta_0)$ we see that the vectors $\p_s u_0(F)$ and
$J^\theta_{\lambda_0(F)}\p_s u_0(F)$ are linearly independent, so that
$\p_s\phi\equiv 1$ and $\p_\theta\phi\equiv 0$. Since
$\phi(s_0,\theta_0)=s_0$, we obtain $\phi(s,\theta)=s$ for all
$(s,\theta)\in V_{h''}(s_0,\theta_0)$ and the conclusion follows.
\end{proof}

\begin{proof}[Proof of Proposition~\ref{prop:regularpts}]
(i) A straightforward computation shows that, for any $s\in\R$, the pair
$\p_s U=(\p_s u,\p_s \lambda)$ satisfies an equation 
of the form~\eqref{eq:eqCn} with smooth coefficients in a local
trivialization along the loop $u(s,\cdot)$ and in a local chart around
$\lambda(s)$. Assume by contradiction that $\p_sU \equiv (0,0)$ on
some nonempty open set $\cU$. By Proposition~\ref{prop:vanish}, we obtain
that $\p_sU \equiv (0,0)$ on some open strip around $\cU$. By the now 
standard open-closed argument we get $\p_sU\equiv 0$ on $\R\times S^1$, 
which contradicts the hypothesis. 

We now prove the second statement. Assuming by
contradiction the existence of a point $s_0\in\R$ such that
$\p_sU\equiv (0,0)$ along $\{s_0\}\times S^1$, we obtain by
Remark~\ref{rmk:vanish} that $\p_s U\equiv (0,0)$ on a strip around
$\{s_0\}\times S^1$. We then conclude as above.

(ii) The first two conditions defining the elements of $R(u,\lambda)$
are clearly open. We need to show that the third one is 
open as well. Arguing by contradiction, we find a point $(s_0,\theta_0)\in
R(u,\lambda)$, a sequence $(s^\nu,\theta^\nu)\to (s_0,\theta_0)$, and
a sequence $s^{\prime\nu}\neq s^\nu$ such that
$U(s^{\prime\nu},\theta^\nu)= U(s^\nu,\theta^\nu)$. Since $\p_s
U(s_0,\theta_0)\neq (0,0)$, we can find $h>0$ such that
$U(\cdot,\theta_0)$ is an embedding on $I_h(s_0)$ and
$U(\cdot,\theta^\nu)$ is an embedding on $I_h(s^\nu)$, for $\nu$ large
enough. Thus, we can assume without loss of generality that
$s^{\prime\nu}$ is bounded away from $s_0$ (otherwise $s^{\prime\nu}
\in I_h(s^\nu)$ for $\nu$ large enough, a contradiction). Since $U$ 
converges at $\pm\infty$ to its asymptotes, and $U(s_0,\theta_0)$ does
not lie on those asymptotes by assumption, we infer the existence of
some $T>0$ such that $s^{\prime\nu}\in [-T,T]$ for all $\nu$. We can
therefore extract a convergent subsequence, still denoted
$s^{\prime\nu}$, such that $s^{\prime\nu}\to s'_0\neq s_0$. Then
$U(s'_0,\theta_0)=U(s_0,\theta_0)$, which contradicts the assumption
that $(s_0,\theta_0)\in R(u,\lambda)$. 

(iii) If $\p_s u\equiv 0$, then $\lambda$ satisfies an ordinary
differential equation independent of $s$ and, since $\p_s \lambda
\not\equiv 0$, we obtain that $\p_s \lambda\neq 0$ on $\R$ and
every point $(s,\theta)\in \R\times S^1$ is regular. 
%If $\p_s\lambda
%\equiv 0$, then $u$ satisfies a Floer equation which is independent
%of $s$, and a point $(s,\theta)$ is regular for $(u,\lambda)$ if and
%only if it is regular for $u$ in the sense
%of~\cite[Section~4]{FHS}. The claim then follows
%from~\cite[Theorem~4.3]{FHS}.  

Let us now assume $\p_su \not \equiv 0$. It is enough to show that
for any $(s,\theta)$ such that $\p_s
u(s,\theta)\neq 0$, there exists a neighbourhood 
$\cU$ such that $R(u,\lambda)\cap \cU$ is dense
in $\cU$. 
Let $(s_0,\theta_0)\in \R\times S^1$ be such that
$\p_s u(s_0,\theta_0)\neq 0$. We choose $h>0$ small enough such
that $\p_s u\neq 0$ on $V_h(s_0,\theta_0)$ and $I_h(s_0)\to
\widehat W$, $s\mapsto u(s,\theta)$ is an embedding for
all $\theta\in I_h(\theta_0)$. Then $I_h(s_0)\to
\widehat W\times \Lambda$, $s\mapsto U(s,\theta)$ is a fortiori also
an embedding for all $\theta\in I_h(\theta_0)$.
Since every point $(s,\theta)\in
V_h(s_0,\theta_0)$ can be approximated by a sequence
$(s^\nu,\theta^\nu)$ satisfying 
$U(s^\nu,\theta^\nu)\neq \op(\theta^\nu), \up(\theta^\nu)$, 
we can assume without loss of generality that 
\begin{equation} \label{eq:Uopup}
 \forall \, (s,\theta)\in V_h(s_0,\theta_0), \ U(s,\theta)\neq
 \op(\theta),\up(\theta).  
\end{equation} 
The conclusion of the Proposition now reduces to
showing that $(s_0,\theta_0)$ can be approximated by a sequence
$(s^\nu,\theta^\nu)\in R(U)$. Assuming this to be false, there exists 
$0<\eps<h$ such that $V_\eps(s_0,\theta_0) \cap R(U) = \emptyset$,
i.e. 
\begin{equation} \label{eq:notinj}
\forall \, (s,\theta)\in V_\eps(s_0,\theta_0), \ \exists \, s'\neq s, \
U(s',\theta)=U(s,\theta). 
\end{equation} 
Since $\lim_{s\to -\infty} U(s,\theta)=\op(s,\theta)$ and $\lim_{s\to
  \infty} U(s,\theta)=\up(s,\theta)$ uniformly in $\theta$, we infer
from~\eqref{eq:Uopup} the existence of a constant $T>0$ such that
$|s'|\le T$ in~\eqref{eq:notinj}.

Let us denote $C(U):=\{(s,\theta)\in \R\times S^1 \, : \,
\p_sU(s,\theta)=(0,0)\}$. Note that, by the proof of (i), the set
$C(U)$ has empty interior. 
We now claim that $(s_0,\theta_0)$ can be approximated by a sequence
$(s^\nu,\theta_0)$ such that, for all $\nu$ and all $s'\in\R$ with
$U(s',\theta_0)=U(s^\nu,\theta_0)$, we have $(s',\theta_0)\notin
C(U)$. Assuming the claim, we can suppose 
without loss of generality that, for each $s'\in\R$ such that
$U(s',\theta_0)=U(s_0,\theta_0)$, we have $(s',\theta)\notin
C(U)$. Moreover, after further diminishing $\eps>0$, we can 
assume without loss of generality that 
\begin{equation} \label{eq:C(U)}
\forall \, (s,\theta)\in V_\eps(s_0,\theta_0), \ \forall \, s'\in\R,
\ U(s,\theta)=U(s',\theta) \ \Rightarrow \ (s',\theta)\notin C(U).
\end{equation} 
Indeed, if this would fail for all $\eps>0$, we could find a
sequence $(s^\nu,\theta^\nu)\to (s_0,\theta_0)$ and a sequence
$s^{\prime\nu}$ such that $(s^{\prime\nu},\theta^\nu)\in C(U)$,
$U(s^{\prime\nu},\theta^\nu)=U(s^\nu,\theta^\nu)$, and $|s^{\prime\nu}
- s_0|\ge \eps_0>0$. Up to a subsequence, we have $s^{\prime\nu}\to s'
\in [-T,T]$, $\theta^\nu\to \theta_0$, and
$U(s',\theta_0)=U(s_0,\theta_0)$ with $(s',\theta_0)\in C(U)$. This
contradicts our last assumption on $(s_0,\theta_0)$, obtained via
the claim.  

To prove the claim, let us choose a neighbourhood $\cV$ of
$U(I_\eps(s_0),\theta_0)$ in $\widehat W\times \Lambda$, of the form
$I_\eps(s_0)\times \R^{2n+m-1}$, and denote $\mathrm{pr}_1$ the
projection to the first coordinate interval $I_\eps(s_0)$. Let
$f:=\mathrm{pr}_1\circ U(\cdot,\theta_0)$, with
$f:\mathrm{dom}(f):=U(\cdot,\theta_0)^{-1}(\cV)\to I_\eps(s_0)$. Let
$C(U)_{\theta_0}:=\{s\in \R \, : \, (s,\theta_0)\in C(U)\}$. Then
$f(C(U)_{\theta_0} \cap \mathrm{dom}(f))$ is contained in
the set of critical values of $f$. By Sard's theorem, this is a
nowhere dense set in $I_\eps(s_0)$, and the claim follows. 

We now closely follow the proof of Theorem~4.3 in~\cite{FHS}. We first
remark that, for any $(s,\theta)\in V_\eps(s_0,\theta_0)$, there is
only a finite number of values $s'\in\R$ such that
$U(s',\theta)=U(s,\theta)$. If not, we could find an accumulation
point $s'\in [-T,T]$ such that $\p_s U(s',\theta)=(0,0)$ and
$U(s',\theta)=U(s,\theta)$, a contradiction with~\eqref{eq:C(U)}. 
Let $s_1,\dots,s_N\in [-T,T]$ be the points such that
$U(s_j,\theta_0)=U(s_0,\theta_0)$, $j=1,\dots,N$. 

We now claim that, for any $r>0$ there exists $\delta>0$ such that 
$$
\forall \, (s,\theta)\in V_{2\delta}(s_0,\theta_0), \ \exists \,
(s',\theta)\in \bigcup _{j=1} ^N V_r(s_j,\theta_0), \
U(s,\theta)=U(s',\theta). 
$$
If this would fail, we could find $r>0$ and a sequence
$(s^\nu,\theta^\nu)\to (s_0,\theta_0)$ such that, for all $\nu$ and
for all $(s',\theta^\nu)\in \bigcup _{j=1} ^N V_r(s_j,\theta_0)$, we
have $U(s^\nu,\theta^\nu)\neq U(s',\theta^\nu)$. On the other hand,
by~\eqref{eq:notinj} there exists $s^{\prime\nu}\in [-T,T]$ such that
$U(s^{\prime\nu},\theta^\nu)=U(s^\nu,\theta^\nu)$, and in particular
$|s^{\prime\nu}-s_j|\ge r$ for all $j$. Up to a subsequence
we have $s^{\prime\nu}\to s'$ and
$\theta^\nu\to \theta_0$, so that $U(s',\theta_0)=U(s_0,\theta_0)$ and
$s'\neq s_j$, $j=1,\dots,N$, a contradiction. 

Following~\cite{FHS}, we define
$$
\Sigma_j:=\{(s,\theta)\in \overline {V_\delta}(s_0,\theta_0) \, : \,
\exists \, (s',\theta)\in \overline {V_r}(s_j,\theta_0), \, U(s',\theta)=U(s,\theta)\}.
$$
Then $\Sigma_j$ is closed and $\overline {V_\delta}(s_0,\theta_0) =
\Sigma_1\cup \dots\cup \Sigma_N$. It follows from Baire's theorem that
one of the $\Sigma_j$ -- say $\Sigma_1$ -- has nonempty
interior. 

Let $(\os,\otheta)\in \mathrm{int}(\Sigma_1)$ and denote
$(\os_1,\otheta)$ the unique preimage of
$U(\os,\otheta)$ in $V_r(s_1,\theta_0)$. Let $0<r_1<r$ be such that
$V_{r_1}(\os_1,\otheta)\subset V_r(s_1,\theta_0)$, and $0<\delta_1<\delta$
be such that $V_{\delta_1}(\os,\otheta)\subset \Sigma_1$, and such that
for all $(s,\theta)\in V_{\delta_1}(\os,\otheta)$, there exists $(s',\theta)\in
V_{r_1}(\os_1,\otheta)$ such that $U(s,\theta)=U(s',\theta)$. It
follows from our construction that, 
for all $0<h'\le r_1$, there exists $0<h\le \delta_1$, such that 
for all $(s,\theta)\in V_h(\os,\otheta)$, there exists
$(s',\theta)\in V_{h'}(\os_1,\otheta)$, such that $U(s,\theta)=U(s',\theta)$. We can
therefore apply Lemma~\ref{lem:unique} with
$(s_0,\theta_0):=(\os,\otheta)$, $U_0:=U$, $U_1:=U(\cdot
+\os_1-\os,\cdot)$, and $h_0=r_1$, to obtain $U_0=U_1$. 
This implies  
$$
U(s,\theta)=\lim_{k\to\pm\infty} U(s+k(\os_1-\os),\theta) = \op(\theta)
= \up(\theta). 
$$
This contradicts our standing assumption $\p_sU \not\equiv
(0,0)$. Proposition~\ref{prop:regularpts} is proved. 
\end{proof} 

%\begin{proposition}[Bi-regular points] \label{prop:bireg}
%Assume $\p_su \not\equiv 0$ and $\p_s\lambda \not\equiv 0$. Then
%there exists a nonempty open set $\cU\subset \R\times S^1$ such that 
%$\widetilde R(u,\lambda)\cap \cU$ is dense in $\cU$. 
%\end{proposition}

%
%\begin{proof} Assume by contradiction that $\widetilde R(u,\lambda)\cap \cU$ is
%  not dense in 
%$\cU$ for every nonempty open set $\cU$. Denote $I:=\{s\in \R \ : \
%\p_s\lambda(s)\neq 0\}$. Then $I\neq \emptyset$ since $\p_s\lambda
%\not\equiv 0$ and $I$ is open. Using~(iii) in
%Proposition~\ref{prop:regularpts}, we obtain that $\p_su(s,\theta)=0$
%for all 
%$s\in I$, $\theta\in S^1$. We claim that $I=\R$. Otherwise, let
%$s_0\in \p I = \p I\setminus I$. Then $\p_s u$ vanishes together with
%all its derivatives along $\{s_0\}\times S^1$, and
%$\p_s\lambda(s_0)=0$. By differentiating~\eqref{eq:Floer2par} with
%respect to $s$ one obtains that $\p_s\lambda$ vanishes together with
%all its derivatives at $s_0$. Since $(\p_s u,\p_s\lambda)\in \ker \,
%D_{(u,\lambda)}$, it follows that $(\p_s u,\p_s\lambda)$ satisfies an
%equation of the form~\eqref{eq:eqCn} in suitable coordinates, and
%therefore vanishes identically in a neighbourhood of $\{s_0\}\times
%S^1$, which contradicts the assumption $s_0\in\p I$. 
%For $I=\R$ we get $\p_s u\equiv 0$, a contradiction again. 
%\end{proof}

\begin{proof}[Proof of Theorem~\ref{thm:Jreg}] Let
$\cJ_\Lambda^r$, $r\ge 1$ denote the space of pairs 
$(J,g)$ of class $C^r$ such that $J$ is an admissible almost complex
structure on $\widehat W$. Let $\cH_{\Lambda,\mathrm{reg}}^r$, $r\ge
1$ denote the space of regular admissible Hamiltonians of class
$C^r$. Let $\cH\cJ_{\Lambda,\mathrm{reg}}^r \subset
\cH_{\Lambda,\mathrm{reg}}^r \times \cJ_\Lambda^r$ denote the space of
triples $(H,J,g)$ such that $(J,g)$ is regular for $H$. 
By a standard argument due to Taubes~\cite[p.52]{McDS}, it is enough to
prove that $\cH\cJ_{\Lambda,\mathrm{reg}}^r$ is of the second Baire
category in $\cH_{\Lambda,\mathrm{reg}}^r \times \cJ_\Lambda^r$. 

Given $p>2$ and $\op,\up\in\cP(H)$, we denote by $\cB$ the space of pairs
$(u,\lambda)$ consisting of maps $u:\R\times S^1\to \widehat W$ and
$\lambda:\R\to \Lambda$ which are locally of class $W^{1,p}$, which
satisfy~\eqref{eq:asymptoticpar}, and which are of class $W^{1,p}$
in local charts near the asymptotes. Then $\cB\times
\cH_{\Lambda,\mathrm{reg}}^r \times \cJ_\Lambda^r$ is 
a Banach manifold. There is a Banach bundle $\cE\to \cB\times
\cH_{\Lambda,\mathrm{reg}}^r \times \cJ_\Lambda^r$ whose fiber at
$(u,\lambda,H,J,g)$ is $\cL^p:=L^p(\R\times S^1,u^*T\widehat W)\oplus 
L^p(\R,\lambda^*T\Lambda)$. The solutions of the parametrized Floer
equations~(\ref{eq:Floer1par}-\ref{eq:Floer2par}) for $(H,J,g)$ are the
zeroes of the section $f:\cB\times \cH_{\Lambda,\mathrm{reg}}^r \times
\cJ_\Lambda^r \to \cE$ given by  
$$
f(u,\lambda,H,J,g):=\left(\begin{array}{c}
\p_s u + J_{\lambda(s)}^\theta (\p_\theta u - 
X_{H_{\lambda(s)}}^\theta (u)) \\
\dot \lambda (s) - \int_{S^1} \vec \nabla_\lambda 
H(\theta,u(s,\theta),\lambda(s)) d\theta 
\end{array}\right). 
$$
The crucial step is to prove that the universal moduli space
$\cM:=f^{-1}(0)$ is a Banach submanifold of $\cB\times
\cH_{\Lambda,\mathrm{reg}}^r \times \cJ_\Lambda^r$. Then the claim
follows easily from the Sard-Smale 
theorem as in~\cite[Proof of Theorem~3.1.5.(ii)]{McDS}. 

The vertical differential of $f$ at a point $(u,\lambda,H,J,g)\in\cM$ is
given by 
\begin{eqnarray*} 
\lefteqn{df(u,\lambda,H,J,g)\cdot (\zeta,\ell,h,Y,A) :=   D_{(u,\lambda)}(\zeta,\ell)} \\
& &
+ \ 
\left(\begin{array}{c}
-J_{\lambda(s)}^\theta X_{h_{\lambda(s)}}^\theta (u) +
Y_{\lambda(s)}^\theta (\p_\theta u -  
X_{H_{\lambda(s)}}^\theta (u)) \\
- \int_{S^1} \vec \nabla_\lambda 
h(\theta,u(s,\theta),\lambda(s)) d\theta +  A \cdot \int_{S^1} \vec
\nabla_\lambda  
H(\theta,u(s,\theta),\lambda(s)) d\theta 
\end{array}\right),
\end{eqnarray*}
where $h\in T_H\cH_{\Lambda,\mathrm{reg}}^r$ and $(Y,A)\in
T_{(J,g)}\cJ_\Lambda^r$. For a description of $h$ we refer to the
proof of Proposition~\ref{prop:Hreg}. We view $Y$ as a family  
$Y=(Y^\theta_\lambda)$, $\lambda\in \Lambda$, $\theta\in S^1$ with
$Y^\theta_\lambda\in \mathrm{End}(T\widehat W)$, such that
$Y^\theta_\lambda J^\theta_\lambda + J^\theta_\lambda
Y^\theta_\lambda=0$ and $\widehat \omega(Y^\theta_\lambda \cdot,\cdot)+
\widehat \omega(\cdot,Y^\theta_\lambda \cdot)=0$. Moreover, for $t\ge
0$ large enough, we have that $Y^\theta_\lambda$ is independent of
$t$, it preserves $\xi$ and vanishes on $\langle \partial/\partial
t,R_\alpha\rangle$. The element $A$ is a tangent vector at $g$ to the
space $\mathrm{Met}^r(\Lambda)$ of Riemannian metrics of class
$C^r$ on $\Lambda$. Considering a $1$-parameter family 
$g^\eps\in \mathrm{Met}^r(\Lambda)$ such that $g^0=g$, we define $A$
by $g(A\cdot,\cdot)=\frac d
{d\eps}|_{\eps=0} g^\eps$, so that $A$ is an element of
$\mathrm{End}(T\Lambda)$ which is symmetric with respect to
$g$. Denoting by $\vec\nabla^\eps_\lambda H$ the $\lambda$-gradient of
$H$ with respect to $g^\eps$, we then have $\frac d {d\eps}|_{\eps=0}
\vec\nabla^\eps_\lambda H = -A \cdot \vec\nabla_\lambda H$, hence the
formula for the vertical differential of $f$. 

We need to show that $df$ is surjective. Since the image of $D_{(u,\lambda)}$ is
closed and has finite codimension, it follows that the image of $df$
has the same property. Thus, it suffices to show that it is dense. Let
$(\eta,k)\in \cL^q=(\cL^p)^*$, $1/p + 1/q=1$ be an element annihilating
$\mathrm{Im}(df)$. Using that $(u,\lambda,H,J,g)\in \cM$, this means
that 
\begin{eqnarray}
\int_{\R\times S^1} \Big\langle \eta,D_u\zeta + (D_\lambda J\cdot
\ell)J \partial _s u - J(D_\lambda X_{H_\lambda}\cdot \ell) -
JX_{h_\lambda} +
Y^\theta_\lambda J\partial _s u \Big\rangle \, dsd\theta 
\nonumber
\\
+\int_\R \Big\langle k,\nabla_s\ell -\nabla_\ell \int_{S^1} \vec
\nabla_\lambda H -  \int_{S^1} \nabla_\zeta \vec \nabla _\lambda H 
-\int_{S^1} \vec \nabla_\lambda h +
A\cdot \dot \lambda \Big\rangle \, ds =  0 \nonumber \\
\label{eq:ann1}
\end{eqnarray}
for any $(\zeta,\ell,h,Y,A)\in T_{(u,\lambda)}\cB \oplus
T_H\cH_{\Lambda,\mathrm{reg}}^r \oplus
T_{(J,g)}\cJ_\Lambda^r$. We claim that $(\eta,k)=(0,0)$. 
Taking $h=0$, $Y=0$, $A=0$ we obtain that $(\eta,k)$ lies in the kernel of
the formal adjoint $D_{(u,\lambda)}^*$. The latter has the same form
as $D_{(u,\lambda)}$ and is therefore elliptic with smooth
coefficients. By elliptic regularity, it follows that $\eta$ and $k$
are smooth. We distinguish now three cases. 

\smallskip 

\noindent {\it Case~1.} $\p_s u \equiv 0$ and $\p_s\lambda \equiv
0$. By Lemma~\ref{lem:constant} the operator $D_{(u,\lambda)}$ is
bijective, so that $df$ is surjective. 

\smallskip 

\noindent {\it Case~2.} $\p_su\equiv 0$ and $\dot\lambda \not\equiv
0$. In this case $\lambda$ satisfies an ordinary differential equation
independent of $s$ and therefore $\dot\lambda\neq 0$ on $\R$ and
every point $(s,\theta)\in \R\times S^1$ is regular. We claim $k\equiv
0$. Indeed, if there existed $s_0\in\R$ with $k(s_0)\neq 0$, we could
take $\zeta=0$, $\ell=0$, $h=0$, $Y=0$ and $A$ supported in a small
neighbourhood of $\lambda(s_0)$ such that $A(\lambda(s_0))\dot\lambda(s_0)=k(s_0)$, so
that the sum of the integrals in~\eqref{eq:ann1} would be $>0$. We claim
$\eta\equiv 0$. Indeed, if there existed $(s_0,\theta_0)$ such that
$\eta(s_0,\theta_0)\neq 0$, we could take $\zeta=0$, $\ell=0$, $Y=0$,
$A=0$ and $h$ supported
near $\lambda(s_0)$, and satisfying
$J^\theta_{\lambda(s_0)}
X_h(\theta,\og(\theta),\lambda(s_0))=-\eta(s_0,\theta)$ for all
$\theta\in S^1$. Then
the sum of the integrals in~\eqref{eq:ann1} would be $>0$. 

\smallskip 

\noindent {\it Case~3.} $\p_su\not\equiv 0$. By Proposition~\ref{prop:regularpts}, there exists a nonempty open set $\Omega\subset \R\times S^1$ consisting of regular points $(s,\theta)$ such that $\p_su(s,\theta)\neq 0$. 

We first claim that $\eta\equiv 0$ on $\Omega$. Arguing by contradiction, we find $(s_0,\theta_0)\in\Omega$ such that $\eta(s_0,\theta_0)\neq 0$. We then take $\zeta=0$, $\ell=0$, $h=0$, $A=0$ and $Y$ supported near $(\theta_0, u(s_0,\theta_0), \lambda(s_0))$ such that
$Y^{\theta_0}_{\lambda(s_0)} J^{\theta_0}_{\lambda(s_0)}
\p_s u(s_0,\theta_0)=\eta(s_0,\theta_0)$. The second integral in~\eqref{eq:ann1} is zero, whereas the first one localizes near $(s_0,\theta_0)$ and is positive. This contradicts~\eqref{eq:ann1}. 

We now claim that $k(s)=0$ for all $(s,\theta)\in\Omega$. 
Arguing again by contradiction, we find $(s_0,\theta_0)\in\Omega$ such that $k(s_0)\neq 0$. We consider a function $h$ of the form $h(\theta,x,\lambda)=\phi(\theta)\psi(x)h_1(\lambda)$ such that $\phi$ is a cutoff function supported near $\theta_0$, $\psi$ is a cutoff function supported near $u(s_0,\theta_0)$, $h_1$ is supported in a neighbourhood of $\lambda(s_0)$ and satisfies $\vec\nabla_\lambda h_1(\lambda(s_0))=-k(s_0)$. The crucial observation is that, if the support of $\psi$ is small enough (depending on the choice of $h_1$), then 
$$
\langle k(s),\vec\nabla_\lambda h(\theta,u(s,\theta),\lambda(s)\rangle\ge 0
$$ 
on $\R\times S^1$, and vanishes outside a small neighbourhood of $(s_0,\theta_0)$. Here we use that $(s_0,\theta_0)$ is a regular point and $\p_su(s_0,\theta_0)\neq 0$. 
We now take $\zeta=0$, $\ell=0$, $Y=0$, $A=0$, and $h$ as above, so that the first integral in~\eqref{eq:ann1} vanishes, and the second integral is positive, a contradiction. 

\end{proof}  

\begin{remark}
We needed the possibility to deform the metric $g$ in the proof of
Theorem~\ref{thm:Jreg} only to treat Case~2. 
%However, the latter could
%be treated using solely the possibility to deform the Hamiltonian
%$H$. Thus, one can fix from the beginning the metric $g$ on $\Lambda$,
%and still achieve transversality by varying the pair $(H,J)$. 
\end{remark}

%%%%%%%%%%%%%%%%%%%%%%%%%%%%%%%%%%%%%%%%%%%%%% 
%%%%%%%%%%%%%%%%%%%%%%%%%%%%%%%%%%%%%%%%%%%%%% 
%%%%%%%%%%%%%%%% Section 5 %%%%%%%%%%%%%%%%%%% 
%%%%%%%%%%%%%%%%%%%%%%%%%%%%%%%%%%%%%%%%%%%%%% 
%%%%%%%%%%%%%%%%%%%%%%%%%%%%%%%%%%%%%%%%%%%%%% 

\section{Fredholm theory in the \boldmath$S^1$-invariant case} \label{sec:S1inv}

In this section we take the parameter space to be $\Lambda=S^{2N+1}$, $N\ge 1$. 

\medskip 

We denote by $\cH^{S^1}_N\subset
\cH_{S^{2N+1}}$ the set of admissible    
Hamiltonian families $H:S^1\times \widehat W\times S^{2N+1}\to \R$ 
which are invariant with respect to the diagonal $S^1$-action on
$S^1\times S^{2N+1}$, meaning that $H(\theta+\tau,x,\tau\lambda)=H(\theta,x,\lambda)$ for all $\tau\in S^1$. It follows from the 
definitions that there exists $t_0\ge 0$ such that, for $t\ge t_0$, we 
have $H(\theta,p,t,\lambda)=\beta e^t +\beta'(\lambda)$, with 
$0<\beta\notin\mathrm{Spec}(M,\alpha)$, and $\beta'\in 
C^\infty(S^{2N+1},\R)$ being $S^1$-invariant.  
 
Given $H\in \cH^{S^1}_N$, the parametrized action functional $\cA$ is 
$S^1$-invariant, and so is the set of its critical points $\cP(H)$. 
For $p=(\gamma,\lambda)\in\cP(H)$, we denote
$$ 
S_p=S_{(\gamma,\lambda)}:= \{(\tau\gamma,\tau\lambda) \ : \ 
\tau\in S^1\} \subset \cP(H), 
$$ 
so that $S_p=S_{\tau \cdot p}$, $\tau\in S^1$. We refer to $S_p$ 
as an {\bf \boldmath$S^1$-orbit of critical points}.  
 
We denote by $\cJ_N^{S^1}$ 
the set of pairs $(J,g)$ consisting of an $S^1$-invariant admissible 
$S^{2N+1}$-family of almost complex structures $J$ on $\widehat W$, and of an 
$S^1$-invariant Riemannian metric $g$ on $S^{2N+1}$. The $S^1$-invariance condition on $J$ means that 
$J_{\tau\lambda}^{\theta+\tau}=J_\lambda^\theta$ for all $\tau\in S^1$.
 
\medskip 

An $S^1$-orbit of critical points $S_p\subset \cP(H)$ is called {\bf 
  nondegenerate} if the Hessian $d^2\cA(\gamma,\lambda)$ has a 
  $1$-dimensional kernel $V_p$ for some (and hence any) $(\gamma,\lambda)\in 
  S_p$. It follows from 
  Lemma~\ref{lem:d2-asy} that nondegeneracy is equivalent to the fact 
  that the kernel of the asymptotic operator $D_p$ is also 
  $1$-dimensional and equal to $V_p$. In both cases, a generator of 
  $V_p$ is given by the infinitesimal generator of the $S^1$-action.  
 
We define the set $\cH^{S^1}_{N,\reg}\subset \cH^{S^1}_N$ to consist 
of elements $H$ such that, for any $p\in\cP(H)$, the $S^1$-orbit $S_p$ 
is nondegenerate.  
 
\begin{proposition} \label{prop:genericHS1}
 The set $\cH^{S^1}_{N,\reg}$ is of the second Baire category in 
 $\cH^{S^1}_N$. Moreover, if $H\in \cH^{S^1}_{N,\reg}$, each 
 $S^1$-orbit $S_p\subset C^\infty(S^1,\widehat W)\times S^{2N+1}$ is 
 isolated.  
\end{proposition}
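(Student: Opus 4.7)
The plan is to mimic the strategy of Proposition~\ref{prop:Hreg}, reinterpreting nondegeneracy in the Morse--Bott sense along $S^1$-orbits throughout. For each integer $r\ge 2$, I first introduce the Banach manifold $\cH^{S^1,r}_N$ of $S^1$-invariant admissible Hamiltonians of class $C^r$, which is an open subset of the closed subspace of $S^1$-invariant elements in the ambient Banach space from Proposition~\ref{prop:Hreg}. For each $t_0\ge 0$, let $\cH^{S^1,r}_{N,\reg,t_0}\subset \cH^{S^1,r}_N$ consist of those $H$ such that every critical orbit $S_p\subset\cP(H)$ with $\mathrm{im}(\gamma)\subset\{t\le t_0\}$ has $\ker d^2\cA(\gamma,\lambda)=V_p$, i.e.\ of dimension one. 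I aim to show that each $\cH^{S^1,r}_{N,\reg,t_0}$ is open and dense in $\cH^{S^1,r}_N$; intersecting over $t_0$ and passing from $C^r$ to $C^\infty$ by the approximation argument of Proposition~\ref{prop:Hreg} will then yield the second Baire category statement.

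For density, I set up the universal moduli space
\[
\widetilde\cP:=f^{-1}(0)\subset \cH^{S^1,r}_N\times C^r(S^1,\{t\le t_0\})\times S^{2N+1},
\]
with the same section $f(H,\gamma,\lambda)=(\dot\gamma-X_H\circ\gamma,-\int_{S^1}\vec\nabla_\lambda H)$ as in Proposition~\ref{prop:Hreg}. Because $S^1$ acts freely on $S^{2N+1}$, the diagonal action $\tau\cdot(H,\gamma,\lambda)=(H,\gamma(\cdot-\tau),\tau\lambda)$ is free on $\widetilde\cP$. Once the vertical differential $df$ is shown surjective at every zero, $\widetilde\cP$ is a Banach submanifold, the quotient $\widetilde\cP/S^1$ is a Banach manifold, and the projection $\pi:\widetilde\cP/S^1\to \cH^{S^1,r}_N$ is Fredholm of index $0$. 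Sard--Smale then identifies $\cH^{S^1,r}_{N,\reg,t_0}$ with the regular values of $\pi$.

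The hard part is the surjectivity of $df$ under the constraint that $h\in T_H\cH^{S^1,r}_N$ must be $S^1$-invariant. In Proposition~\ref{prop:Hreg} one freely used perturbations supported near a single point of $\gamma$, which is incompatible with invariance. My approach is to start from a non-invariant $\widetilde h$ supported in a small neighbourhood $\cU$ of a chosen point $(\theta_0,\gamma(\theta_0),\lambda)$ and set
\[
h(\theta,x,\mu):=\int_{S^1}\widetilde h(\theta+\tau,x,\tau\mu)\,d\tau.
\]
Since the $S^1$-orbit of $(\theta_0,\gamma(\theta_0),\lambda)$ in $S^1\times\widehat W\times S^{2N+1}$ is embedded -- its stabiliser is trivial because $S^1$ acts freely on $S^{2N+1}$ -- a sufficiently small $\cU$ has pairwise disjoint $S^1$-translates, so only a single $\tau$ contributes to $h$ at each point of $\cU$ and $h$ reproduces $\widetilde h$ there up to a translation in $\theta$. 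The two explicit choices of $\widetilde h$ from Proposition~\ref{prop:Hreg} (one constant in $x$ near $\mathrm{im}(\gamma)$ with $\vec\nabla_\lambda\widetilde h=k$, the other with $X_{\widetilde h}=-\eta$ along $\gamma$) therefore produce $S^1$-invariant perturbations realising any prescribed element of the cokernel of $D_{(\gamma,\lambda)}$ modulo the one-dimensional subspace $V_p$; this is exactly what is required for the $S^1$-quotient of $df$ to be surjective at points of $\widetilde\cP$.

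Openness of $\cH^{S^1,r}_{N,\reg,t_0}$ follows from the Arzel\`a--Ascoli argument of Proposition~\ref{prop:Hreg}: a convergent sequence of critical orbits whose asymptotic operators have kernels strictly larger than $V_p$ produces the same pathology in the limit, using that $S^1$-invariance is preserved by $C^0$-convergence and that the $S^1$-orbits live in the compact region $\{t\le t_0\}\times S^{2N+1}$. Finally, isolation of $S^1$-orbits for $H\in \cH^{S^1}_{N,\reg}$ is a direct consequence of Morse--Bott nondegeneracy: on a local slice transverse to the compact orbit $S_p$, the restricted critical-point equation has nondegenerate linearisation, so the implicit function theorem excludes any other critical point in a tubular neighbourhood of $S_p$.
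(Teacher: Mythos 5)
Your strategy---quotient the universal moduli space by the free $S^1$-action and apply Sard--Smale to the quotient projection---is genuinely different from the paper's, which never forms a quotient: it instead augments the section by an auxiliary real parameter, replacing $f$ by $\of(H,\gamma,\lambda,a)=(\dot\gamma-X_H\circ\gamma+a\dot\gamma,\ -\int_{S^1}\vec\nabla_\lambda H+aX)$, so that the extra direction $b(\dot\gamma,X)$ restores surjectivity of the vertical differential and turns Morse--Bott nondegeneracy into the regular-value condition for the augmented projection ($\coker D_{(\gamma,\lambda)}$ of dimension at most one, hence exactly one by self-adjointness and the $S^1$-symmetry). Your averaging construction of invariant perturbations, the openness argument, and the isolation argument are fine and essentially agree with the paper's.

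There is, however, a genuine gap in your Fredholm bookkeeping. The vertical differential $df$ restricted to $S^1$-invariant $h$ is \emph{never} surjective onto the full fibre $C^{r-1}(S^1,\gamma^*T\widehat W)\times T_\lambda S^{2N+1}$: differentiating the invariance $H(\theta+\tau,x,\tau\lambda)=H(\theta,x,\lambda)$ (equivalently, the $S^1$-invariance of $\cA$) along the infinitesimal generator shows that $f$, and hence $df$ at any zero, takes values in the codimension-one subbundle of pairs $(\eta,k)$ with $\int_{S^1}\omega(\eta,\dot\gamma)\,d\theta=g(k,X)$; in particular the direction $(0,X)$ is never attained, which is precisely what the paper's parameter $a$ is there to produce. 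Consequently your argument does not show that $\widetilde\cP=f^{-1}(0)$ is a Banach submanifold; and even granting surjectivity, the projection $\pi:\widetilde\cP/S^1\to\cH^{S^1,r}_N$ would have index $-1$, not $0$, with $\coker d\pi\cong\coker D_{(\gamma,\lambda)}$ always nonzero (since $(\dot\gamma,-X)\in\ker D_{(\gamma,\lambda)}$ and $D_{(\gamma,\lambda)}$ is self-adjoint of index zero), so $\pi$ would have no regular values over nonempty fibres. You half-acknowledge the defect when you say your perturbations realise the cokernel only ``modulo $V_p$'', but surjectivity modulo $V_p$ is not what the implicit function theorem requires for $\widetilde\cP$ itself. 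The quotient route can be repaired by viewing $f$ as a section of the codimension-one subbundle above and checking that $D_{(\gamma,\lambda)}$ also maps into it (so the restricted operator has index $1$, the quotient projection has index $0$, and its regular values are exactly the Morse--Bott nondegenerate $H$); alternatively, adopt the paper's augmentation, which achieves the same effect with less structure to verify.
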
  
 
\begin{proof} 
The proof is similar to that of Proposition~\ref{prop:Hreg}. Given an
integer $r \ge 2$, we denote by $\cH^{r,S^1}_N$ the space of 
$S^1$-invariant admissible Hamiltonian families of class $C^r$. 
We denote by $\cH^{r,S^1}_{N,\textrm{reg}} \subset \cH^{r,S^1}_{S^{2N+1}}$ 
the set of Hamiltonians  $H$ such that, for any $p\in\cP(H)$, the $S^1$-orbit $S_p$ 
is nondegenerate.  
For $t_0\ge 0$, we denote $\{t\le t_0\}:=W\cup M\times [0,t_0]$ and  
$\cH^{r,S^1}_{N,\textrm{reg},t_0}\subset\cH^{r,S^1}_N$ the set of 
Hamiltonians $H$ such that for any $p = (\gamma,\lambda)\in\cP(H)$ with 
$\mathrm{im}(\gamma)\subset \{t\le t_0\}$, the $S^1$-orbit $S_p$ is nondegenerate. 
Then  
$$ 
\cH^{r,S^1}_{N,\textrm{reg}}=\bigcap_{t_0\ge 0} \cH^{r,S^1}_{N,\textrm{reg},t_0}. 
$$ 
As in Proposition~\ref{prop:Hreg}, it is enough to prove that  $\cH^{r,S^1}_{N,\textrm{reg},t_0}$ 
is open and dense, so that $\cH^{r,S^1}_{N,\textrm{reg}}$ is of the second Baire category. 
The proof that $\cH^{r,S^1}_{N,\textrm{reg},t_0}$ is open is similar to the proof that 
$\cH^r_{\Lambda, \textrm{reg}, t_0}$ is open in Proposition~\ref{prop:Hreg}. 
We now prove that $\cH^{r,S^1}_{N,\textrm{reg},t_0}$ is dense. 
We consider the Banach bundle $\cE\to \cH^{r,S^1}_N \times C^r(S^1,\{t\le t_0\})\times 
S^{2N+1} \times \R$ whose fiber at $(H,\gamma,\lambda,a)$ is 
$\cE_{(H,\gamma,\lambda,a)}:=C^{r-1}(S^1,\gamma^*T\widehat W)\times 
T_\lambda S^{2N+1}$, and the section $\of$ given by  
$$ 
\of(H,\gamma,\lambda,a):= (\dot \gamma - X_H\circ \gamma + a \dot \gamma, 
-\int_{S^1} \vec \nabla _\lambda H + a X),  
$$ 
where the vector field $X$ denotes the infinitesimal generator of the $S^1$-action 
on $S^{2N+1}$. We first prove that $\overline \cP:=\of^{-1}(0)$ is a Banach 
submanifold of $\cH^{r,S^1}_N \times C^r(S^1,\{t\le t_0\})\times S^{2N+1} \times 
\R$. Indeed, the vertical differential of $\of$ at a point 
$(H,\gamma,\lambda,a)\in \overline\cP$ is given by  
\begin{eqnarray*} 
\lefteqn{ d\of(H,\gamma,\lambda,a)\cdot(h,\zeta,\ell,b) } \\ 
&=& 
\left(\begin{array}{c} 
\nabla_\theta\zeta-\nabla_\zeta X_H - (D_\lambda X_H)\cdot\ell-X_h + b \dot \gamma \\ 
-\int_{S^1}\nabla_\zeta \vec\nabla_\lambda H - \int_{S^1} \nabla_\ell 
\vec \nabla_\lambda H - \int_{S^1}\vec \nabla _\lambda h + b X 
\end{array}\right) \\ 
&=& df(H,\gamma,\lambda) \cdot (h, \zeta, \ell) + b \left(\begin{array}{c} 
\dot \gamma \\ 
X  
\end{array} \right) , 
\end{eqnarray*} 
where $f(H,\gamma, \lambda)$ is the restriction of $\of$ to $\{ a = 0 \}$. 
That $d\of(H,\gamma,\lambda,a)$ is surjective is seen as 
follows. First, $d\of(H,\gamma,\lambda,a)\cdot(h,0,0,a+1) =  
(0,X)$ for $h = H$ near $\im(\gamma)$. 
Given $k\in T_\lambda S^{2N+1}$ such that $g(k,X) = 0$, we have 
$(0,k)=d\of(H,\gamma,\lambda,a)\cdot(h,0,0,0)$, with 
$h(\cdot,\cdot,\lambda)=\mathrm{ct.}$ in some neighbourhood of 
$\mathrm{im}(\gamma)$, $h$ is $S^1$-invariant 
and $\vec\nabla _\lambda h=k$.  
Given $\eta\in C^{r-1}(S^1,\gamma^*T\widehat W)$, let us choose 
$h \in T_H \cH^{r,S^1}_N$ such that $X_h=-\eta$ along $\gamma$.  
Then the first component of $d\of(H,\gamma,\lambda,a)\cdot(h,0,0,0)$ 
is equal to $\eta$. This proves that $d\of(H,\gamma, \lambda, a)$ is surjective 
and that $\overline \cP$ is a Banach submanifold as desired.  
 
We now claim that the set of regular values of the natural projection  
$pr : \overline\cP\to \cH^{r,S^1}_N$ is contained in $\cH^{r,S^1}_{N,\textrm{reg},t_0}$. 
It then follows from the Sard-Smale theorem that the latter is dense.  
Given such a regular value $H$, for any $(H, \gamma, \lambda, a)\in pr^{-1}(H)$ we have that
\begin{eqnarray*} 
\forall \, h \in T_H \cH^{r,S^1}_N, \
\exists \, (\zeta, \ell, b),   \  \left( \begin{array}{c} 
-X_h \\ 
 - \int_{S^1} \vec\nabla_\lambda h 
\end{array} \right) 
+ D_{(\gamma, \lambda)} (\zeta, \ell) + b  
\left( \begin{array}{c} 
\dot \gamma \\ 
X 
\end{array} \right) = 0. 
\end{eqnarray*} 
Since the restriction of $d\of$ to $T_H\cH^{r,S^1}_N \oplus 0 \oplus 0 \oplus \R$ 
is surjective, we deduce that the cokernel of $D_{(\gamma, \lambda)}$ has dimension  
at most $1$ for any $(H, \gamma, \lambda, a) \in pr^{-1}(H)$ and in particular for any 
$(\gamma, \lambda) \in \cP(H)$. 
On the other hand, since $D_{(\gamma, \lambda)}$ is selfadjoint, the 
same holds for $\dim \ker D_{(\gamma, \lambda)}$. But the latter is at least $1$ due 
to the $S^1$-symmetry, which proves the claim. 
\end{proof}  
 
Let $d>0$ be small enough (for a fixed $H\in\cH^{S^1}_{N,\reg}$, one 
can take $d>0$ to be smaller than the minimal spectral gap of the asymptotic 
operators $D_p$, $p\in\cP(H)$), and fix $1<p<\infty$. Given 
$\op,\up\in \cP(H)$ and $(u,\lambda)\in \widehat 
\cM(S_\op,S_\up;H,J,g)$, we define   
\begin{eqnarray*} 
  \cW^{1,p,d} & := & W^{1,p}(u^*T\widehat 
  W;e^{d|s|}dsd\theta) \oplus W^{1,p}(\lambda^* 
  TS^{2N+1};e^{d|s|}ds)\oplus V_{\op}\oplus V_{\up}, \\ 
 \cL^{p,d} & := & L^p(u^*T\widehat 
  W;e^{d|s|}dsd\theta) \oplus L^p(\lambda^* 
  TS^{2N+1};e^{d|s|}ds). 
\end{eqnarray*} 
Here we identify $V_{\op}$, $V_{\up}$ with the $1$-dimensional spaces  
generated by the sections $\beta(s)(\dot\og,X_{\olambda})$, respectively $\beta(-s)(\dot\ug,X_{\ulambda})$ 
 of $u^*T\widehat W\oplus \lambda^*TS^{2N+1}$. For this identification, we denote by $X_{\olambda}$, $X_{\ulambda}$ the values of the infinitesimal generator of the $S^1$-action on $S^{2N+1}$  
 at the points $\olambda$, respectively $\ulambda$, and choose a cut-off function $\beta:\R\to [0,1]$ which is equal to $1$ near $-\infty$, and vanishes near $+\infty$.  
 
\begin{proposition} \label{prop:indexMB} 
Assume $S_\op,S_\up\subset \cP(H)$ are nondegenerate. For any 
$(u,\lambda)\in \widehat \cM(S_\op,S_\up;H,J,g)$ the operator  
$$ 
D_{(u,\lambda)}: \cW^{1,p,d} \to \cL^{p,d} 
$$ 
is Fredholm.
\end{proposition}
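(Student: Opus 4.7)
The plan is to reduce to the nondegenerate case of Theorem~\ref{thm:Fredholm} by conjugating with an exponential weight, and then to augment the domain by the two-dimensional subspace $V_\op\oplus V_\up$. The key observation is that the weight $e^{d|s|}$ shifts the spectrum of the asymptotic operators, pushing the one-dimensional kernel coming from the $S^1$-symmetry away from $0$.

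To carry this out I would fix a smooth function $\sigma:\R\to\R$ with $\sigma(s)=d|s|/p$ for $|s|\ge 1$. Multiplication by $e^{\sigma}$ gives isometric isomorphisms $L^{p,d}\cong L^p$ and $W^{1,p,d}\cong W^{1,p}$ for sections of $u^*T\widehat W$ and $\lambda^*TS^{2N+1}$. Under this conjugation $D$ becomes $D^\sigma:=e^\sigma D e^{-\sigma}=D+\sigma'(s)\Id$, whose asymptotic operators at $\pm\infty$ are $D_\op-(d/p)\Id$ and $D_\up+(d/p)\Id$. Since $d$ is chosen strictly smaller than the spectral gap of both $D_\op$ and $D_\up$ at $0$, and since the eigenvalue $0$ of these self-adjoint operators is precisely the simple eigenvalue generated by the $S^1$-symmetry, the shifted asymptotic operators are bijective. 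The proof structure of Theorem~\ref{thm:Fredholm} --- namely Lemmas~\ref{lem:elliptic}, \ref{lem:constant}, \ref{lem:Fredholm}, which are insensitive to the bounded zero-order perturbation $\sigma'\Id$ --- applies verbatim to $D^\sigma$, yielding that $D^\sigma:W^{1,p}\to L^p$ is Fredholm. Transporting through the weight isomorphism, $D:W^{1,p,d}(u^*T\widehat W)\oplus W^{1,p,d}(\lambda^*TS^{2N+1})\to \cL^{p,d}$ is Fredholm.

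It remains to include the finite-dimensional summand $V_\op\oplus V_\up$ in the domain. The generators $\beta(s)(\dot\og,X_\olambda)$ and $\beta(-s)(\dot\ug,X_\ulambda)$ do not lie in the weighted Sobolev space (they are constant at one end of the cylinder), but their images under $D$ are compactly supported: since $(\dot\og,X_\olambda)\in\ker D_\op$, the expression $D(\beta(\dot\og,X_\olambda))$ reduces to a commutator term supported where $\beta'\neq 0$, and the argument at the other asymptote is symmetric. Hence $D$ extends to a bounded operator $\cW^{1,p,d}\to \cL^{p,d}$, and adjoining a finite-dimensional subspace to the domain of a Fredholm operator preserves the Fredholm property while shifting the index by $+2$.

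The main technical point is the invertibility of the shifted constant operator obtained as the asymptotic form of $D^\sigma$, that is, the analogue of Lemma~\ref{lem:constant} with $D_\op, D_\up$ replaced by their shifts. Once this is established from the spectral gap condition, all other ingredients of Section~\ref{sec:param} carry through unchanged, and the finite-dimensional augmentation step is purely formal.
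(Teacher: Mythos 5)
Your proof is essentially the paper's: conjugate by $e^{d|s|/p}$, note that the asymptotic operators are thereby shifted by $\pm d/p$ off their one-dimensional kernels (so that Theorem~\ref{thm:Fredholm} applies to the conjugated operator with nondegenerate asymptotics), and observe that adjoining the finite-dimensional summand $V_\op\oplus V_\up$ to the domain preserves the Fredholm property. The only quibble is your claim that $D\big(\beta(s)(\dot\og,X_{\olambda})\big)$ is compactly supported: besides the commutator term $\beta'(s)(\dot\og,X_{\olambda})$ there is the term coming from the difference between the zero-order part of $D$ along $(u,\lambda)$ and its limit at $-\infty$, which is only exponentially decaying rather than compactly supported; it nevertheless lies in $\cL^{p,d}$ for $d$ below the spectral gap, so the conclusion stands.
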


\begin{proof} 
Let $\cW^{1,p}$ and $\cL^p$ be defined as $\cW^{1,p,d}$ and
$\cL^{p,d}$ above, with $d=0$ and without taking into account the
direct summands $V_{\op}$, $V_{\up}$. Let 
$\widetilde D_{(u,\lambda)}:\cW^{1,p}\to \cL^p$ 
be the operator obtained by
conjugating with $e^{\frac d p |s|}$ the restriction of
$D_{(u,\lambda)}$ to   
$W^{1,p}(u^*T\widehat   W;e^{d|s|}dsd\theta) \oplus W^{1,p}(\lambda^* TS^{2N+1};e^{d|s|}ds)$.  
Then $\widetilde D_{(u,\lambda)}$ has nondegenerate asymptotics, hence
it is Fredholm by Theorem~\ref{thm:Fredholm} (the asymptotic operator
at $-\infty$ is $\widetilde D_{\op}=D_{\op}+\frac d p \one$, and the
asymptotic operator at $+\infty$ is $\widetilde D_{\up}=D_{\up}-\frac
d p \one$).  It follows that the operator 
$D_{(u,\lambda)}$ is also Fredholm. 
\end{proof}  
 
%%%%%%%%%%%%%%%%%%%%%%%%%%%%%%%%%%%%%%%%%%%%%%
%%%%%%%%%%%%%%%%%%%%%%%%%%%%%%%%%%%%%%%%%%%%%% 
%%%%%%%%%%%%%%%% Section 6 %%%%%%%%%%%%%%%%%%% 
%%%%%%%%%%%%%%%%%%%%%%%%%%%%%%%%%%%%%%%%%%%%%% 
%%%%%%%%%%%%%%%%%%%%%%%%%%%%%%%%%%%%%%%%%%%%%% 

\section{Unique continuation in the \boldmath$S^1$-invariant case} \label{sec:uniqueS1}

The purpose of this section is to prove a unique continuation result which is slightly more general than the one in Section~\ref{sec:unique}.
This is needed in the proof of Theorem~A\,\ref{item:INTROb}.

\medskip 

\noindent {\bf Notation.} We denote by $X$ the
infinitesimal generator of the $S^1$-action on the parameter space $\Lambda=S^{2N+1}$.  
We denote by $\cH^{S^1}_N\subset
\cH_{S^{2N+1}}$ the set of admissible    
Hamiltonian families $H:S^1\times \widehat W\times S^{2N+1}\to \R$ 
which are invariant with respect to the diagonal $S^1$-action on
$S^1\times S^{2N+1}$. We denote by $\cJ_N^{S^1}$ 
the set of pairs $(J,g)$ consisting of an $S^1$-invariant admissible 
$S^{2N+1}$-family of almost complex structures $J$ on $\widehat W$ and of an 
$S^1$-invariant Riemannian metric $g$ on $S^{2N+1}$.  

\medskip 

\begin{definition} \label{defi:tilde}
Given $H\in\cH^{S^1}_N$, we define $\tH:\widehat W\times
S^{2N+1}\to \R$ by 
$$
\tH(x,\lambda):=H(0,x,\lambda).
$$
Given an $S^1$-invariant almost complex structure
$J=(J^\theta_\lambda)$, we define 
$$
\tJ_\lambda(x):=J^0_\lambda(x).
$$
Given maps $u:\R\times S^1\to \widehat W$, $\lambda:\R\to S^{2N+1}$,
we define maps $\tlambda:\R\times S^1\to S^{2N+1}$ and $\tU:\R\times
S^1\to \widehat W\times S^{2N+1}$ by 
$$
\tlambda(s,\theta):=(-\theta)\cdot \lambda(s),\qquad 
\tU(s,\theta):=(u(s,\theta),\tlambda(s,\theta)). 
$$
\end{definition} 

It follows from the definitions that the pair $(u,\lambda)$ satisfies
equations~(\ref{eq:Floer1par}--\ref{eq:asymptoticpar}) 
if and only if $\tU=(u,\tlambda)$ satisfies the equations
\begin{eqnarray}
\label{eq:tFloer1}
\p_s u + \tJ_{\tlambda} (\p_\theta u - 
X_{\tH_{\tlambda}} (u)) & = & 0, \\ 
\label{eq:tFloer2} 
\p_s\tlambda - \int_{S^1} \tau_*\vec \nabla_\lambda 
\tH(\tU(s,\theta+\tau)) \, d\tau & = & 0, \\
\label{eq:tFloer3}
\p_\theta\tlambda + X_{\tlambda} & = & 0,
\end{eqnarray} 
and
\begin{equation} \label{eq:tasymptotic} 
 \lim_{s\to -\infty} \tU(s,\theta)
 =(\og(\theta),(-\theta)\cdot \olambda), \qquad
 \lim_{s\to +\infty} \tU(s,\theta)
 =(\ug(\theta),(-\theta)\cdot\ulambda)
\end{equation} 
for all $\theta\in S^1$. 
We note that equations~(\ref{eq:tFloer1}--\ref{eq:tFloer3}) are
independent of the variables $s$ and $\theta$. 

\begin{proposition}[Unique continuation] \label{prop:tunique} 
  Let $h>0$ and $\tU_i=(u_i,\tlambda_i):Z_h \to \widehat W\times S^{2N+1}$,
  $i=0,1$ be smooth functions
  satisfying~(\ref{eq:tFloer1}-\ref{eq:tFloer3}).  
If $\tU_0=\tU_1$ on some
nonempty open set $\cU\subset Z_h$, then $\tU_0=\tU_1$ on 
$Z_h$. 
\end{proposition}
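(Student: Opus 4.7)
The plan is to reduce Proposition~\ref{prop:tunique} to Proposition~\ref{prop:unique}, exploiting the fact that equation~(\ref{eq:tFloer3}) is a pointwise ODE in $\theta$ whose general solution is an orbit of the $S^1$-action on $S^{2N+1}$. The idea is to extract from any solution $\tU_i = (u_i, \tlambda_i)$ of~(\ref{eq:tFloer1}--\ref{eq:tFloer3}) a corresponding solution $(u_i, \lambda_i)$ of the parametrized Floer equations~(\ref{eq:Floer1par}--\ref{eq:Floer2par}), and to transfer the hypothesis and conclusion through this correspondence.

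Concretely, I would define $\lambda_i : I_h \to S^{2N+1}$ by $\lambda_i(s) := \theta \cdot \tlambda_i(s,\theta)$. Using that $X$ is invariant under its own flow and applying~(\ref{eq:tFloer3}), one computes
$$\p_\theta\big(\theta \cdot \tlambda_i\big) = X_{\theta \cdot \tlambda_i} + \theta_* \p_\theta \tlambda_i = X_{\theta \cdot \tlambda_i} - \theta_* X_{\tlambda_i} = 0,$$
so $\lambda_i$ is well defined and $\tlambda_i(s,\theta) = (-\theta) \cdot \lambda_i(s)$. The $S^1$-invariance of $H$, $J$, and $g$ yields the pointwise identities $H(\theta, x, \lambda) = \tH(x, (-\theta) \cdot \lambda)$ and $J^\theta_\lambda = \tJ_{(-\theta) \cdot \lambda}$, as well as the formula $\vec\nabla_\lambda H(\theta,x,\lambda) = \theta_* \vec\nabla_\mu \tH(x, (-\theta) \cdot \lambda)$, where the latter uses that $g$ is $S^1$-invariant. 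A direct substitution and a change of variable $\tau = \theta - \theta_0$ in the integral then show that $(u_i, \lambda_i)$ satisfies~(\ref{eq:Floer1par}--\ref{eq:Floer2par}) on $Z_h$; this is the inverse of the correspondence discussed just after Definition~\ref{defi:tilde}.

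The hypothesis $\tU_0 = \tU_1$ on the nonempty open set $\cU \subset Z_h$ yields $u_0 = u_1$ on $\cU$, and $\theta \cdot \tlambda_0 = \theta \cdot \tlambda_1$ on $\cU$ gives $\lambda_0(s) = \lambda_1(s)$ for all $s$ in the nonempty open $s$-projection of $\cU$. Hence $(u_0, \lambda_0)$ agrees with $(u_1, \lambda_1)$ on $\cU$, and Proposition~\ref{prop:unique} delivers $u_0 \equiv u_1$ on $Z_h$ and $\lambda_0 \equiv \lambda_1$ on $I_h$. Substituting into $\tlambda_i(s,\theta) = (-\theta) \cdot \lambda_i(s)$ gives $\tlambda_0 \equiv \tlambda_1$ on $Z_h$, hence $\tU_0 \equiv \tU_1$.

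The main analytic work, namely the semi-local Carleman estimate~\ref{prop:semilocalCarleman} and the resulting integro-differential Aronszajn theorem~\ref{thm:inequality}, has already been carried out in Section~\ref{sec:unique} and is packaged in Proposition~\ref{prop:unique}. The role of the present argument is purely algebraic, trading the $\theta$-dependent $\tlambda_i$ for a $\theta$-independent $\lambda_i$ via the $S^1$-action; no new analytic obstacle arises.
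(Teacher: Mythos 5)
Your argument is correct, but it is not the route the paper takes. You reduce the twisted statement to Proposition~\ref{prop:unique} by observing that equation~\eqref{eq:tFloer3} forces $\tlambda_i(s,\theta)=(-\theta)\cdot\tlambda_i(s,0)$, so that setting $\lambda_i(s):=\theta\cdot\tlambda_i(s,\theta)$ (well defined by your computation, which correctly uses $\theta_*X_p=X_{\theta\cdot p}$ and the $S^1$-invariance of $H$, $J$, $g$) recovers a genuine solution of~(\ref{eq:Floer1par}--\ref{eq:Floer2par}); the hypothesis and conclusion then transfer through the bijective correspondence stated after Definition~\ref{defi:tilde}. The paper instead argues directly in the twisted picture: it first shows that agreement on $\cU$ propagates to a full circle $\{s\}\times S^1$ (via the ODE in $\theta$ and standard Floer unique continuation), and then runs an open--closed argument in $s$ whose openness step requires Proposition~\ref{prop:tvanish}, an enhancement of Proposition~\ref{prop:vanish} for systems~\eqref{eq:teqCn} in which the integral kernel and the $\lambda$-component are allowed to depend on $\theta$. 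Your reduction is shorter and renders Proposition~\ref{prop:tvanish} superfluous for this purpose, at the cost of exploiting the precise global structure of~\eqref{eq:tFloer3} (its solutions are exactly orbits of the $S^1$-action); the paper's version is more robust in that Proposition~\ref{prop:tvanish} applies to an arbitrary linear equation $\p_\theta\tlambda+F(s,\theta)\tlambda=0$ and to general linear integro-differential systems, not only to differences of geometric solutions. Two cosmetic points: the change of variables in the integral identity should read $\theta'=\theta+\tau$ rather than $\tau=\theta-\theta_0$, and you should note explicitly that the ``if and only if'' correspondence after Definition~\ref{defi:tilde}, though stated for maps on $\R\times S^1$ with asymptotics, is a pointwise identity in $s$ and hence valid on the finite cylinder $Z_h$ without the asymptotic conditions.
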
 

To prove Proposition~\ref{prop:tunique}, we need the following
enhancement of Proposition~\ref{prop:vanish}. 

\begin{proposition} \label{prop:tvanish}
Let $h>0$ and $\tU=(u,\tlambda):Z_h\to \C^n\times \R^{2N+1}$ be
$C^\infty$-functions satisfying 
\begin{equation} \label{eq:teqCn}
\left\{\begin{array}{rcl} 
\p_s u + J(s,\theta) \p_\theta u + C(s,\theta) u + D(s,\theta) \lambda
& = & 0, \\
\p_s \tlambda + \int_{S^1} E(s,\theta,\tau) \tU(s,\tau) \, d\tau & = &
0, \\
\p_\theta\tlambda + F(s,\theta)\tlambda & = & 0, 
\end{array}\right.
\end{equation}
with $C,D,E,F$ of class $C^1$, $J$ of class $C^\infty$ and
$J^2=-\one$. Assume there exists a nonempty open set $\cU\subset Z_h$
such that $\tU(s,\theta)=(0,0)$ for all $(s,\theta)\in \cU$. 
Then $\tU\equiv (0,0)$ on $Z_h$. 
\end{proposition}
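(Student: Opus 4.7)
The strategy adapts that of Proposition~\ref{prop:vanish}, exploiting the additional third equation in~\eqref{eq:teqCn} to handle the $\theta$-dependence of $\tlambda$. My plan is to first show that $\tU$ vanishes on an entire vertical strip, then derive an integro-differential inequality of the form covered by Theorem~\ref{thm:inequality}, and finally conclude by a standard open-closed argument along the $s$-axis.

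\textbf{Step 1: Vanishing on a strip.} Since $\cU$ is open, I choose $(s_0,\theta_0)\in\cU$ and $\eps>0$ small enough that $I_\eps(s_0)\times\,]\theta_0-\eps,\theta_0+\eps[\,\subset\cU$. For each fixed $s\in I_\eps(s_0)$, the section $\tlambda(s,\cdot)$ solves the linear ODE $\p_\theta\tlambda+F(s,\theta)\tlambda=0$ on $S^1$ and vanishes on an arc; uniqueness for linear ODEs forces $\tlambda(s,\cdot)\equiv 0$ on all of $S^1$. Hence $\tlambda\equiv 0$ on the entire strip $I_\eps(s_0)\times S^1$, and on that strip the first equation of~\eqref{eq:teqCn} reduces to $\p_s u+J\p_\theta u+Cu=0$. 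Since $u$ already vanishes on $\cU$, the standard unique continuation principle for the Cauchy--Riemann-type operator (\cite[Proposition~3.1]{FHS}) gives $u\equiv 0$ on $I_\eps(s_0)\times S^1$, so $\tU\equiv 0$ there.

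\textbf{Step 2: Integro-differential inequality.} As in Proposition~\ref{prop:vanish}, I change complex frame to reduce to $J=i$ and pad $\R^{2N+1}$ with one zero coordinate so that $\tU$ takes values in $\C^k$ for some $k$. The goal is then the pointwise bound
\begin{equation*}
|\Delta\tU|^2 \;\le\; M\Bigl(|\tU|^2 + |\nabla\tU|^2 + \int_{S^1}|\tU(s,\tau)|^2\,d\tau\Bigr)
\end{equation*}
on a slightly shrunken sub-cylinder. For the $u$-component, applying $\p_s-i\p_\theta$ to the first equation in~\eqref{eq:teqCn} and re-substituting that same equation to eliminate $\p_s u$ produces $\Delta u$ together with terms of order at most one in $u$ and $\tlambda$; the occurrences of $\p_s\tlambda$ and $\p_\theta\tlambda$ are controlled pointwise by the second and third equations of~\eqref{eq:teqCn}, via Cauchy--Schwarz and boundedness of $F$ respectively. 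For the $\tlambda$-component, I differentiate the second equation once in $s$ to compute $\p_s^2\tlambda$; after expanding $\p_s\tU(s,\tau)$ under the integral by means of~\eqref{eq:teqCn}, the resulting $\p_\tau u$ contribution is handled by integration by parts in $\tau\in S^1$, shifting the derivative onto the coefficients. The second derivative $\p_\theta^2\tlambda$ is computed algebraically from the third equation. Summing these estimates yields the displayed inequality.

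\textbf{Step 3: Open-closed argument.} Let $I\subset\,]-h,h[$ be the set of $s_0$ such that $\tU$ vanishes on some open neighbourhood of $\{s_0\}\times S^1$. By Step 1, $I$ is nonempty, and it is clearly open. For closedness, let $s_0\in\,]-h,h[$ be a limit point of $I$. Then $\tU$ vanishes on an open set accumulating at $\{s_0\}\times S^1$, so $\tU$ vanishes together with all its derivatives along this slice. Translating $s_0$ to the origin, the inequality of Step 2 (valid on some small $Z_{h'}$) together with Theorem~\ref{thm:inequality} gives $\tU\equiv 0$ on $Z_{h'}$, so $s_0\in I$. By connectedness of $]-h,h[$ we conclude $I=\,]-h,h[$, and hence $\tU\equiv 0$ on $Z_h$.

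\textbf{Main obstacle.} The delicate point is obtaining the $\tlambda$-part of the inequality in Step 2. When $\p_s^2\tlambda$ is computed from the integral equation and $\p_s u$ is eliminated via the first Floer-type equation, a term of the form $\int_{S^1}E(s,\theta,\tau)J(s,\tau)\p_\tau u(s,\tau)\,d\tau$ appears; this integral contains a $\tau$-derivative of $u$ and cannot be bounded pointwise by the allowed right-hand side. The only way to put it in the desired form is to integrate by parts in $\tau$, transferring the derivative onto $EJ$, which is legitimate precisely because $E$ is $C^1$ and $J$ is $C^\infty$. Avoiding this apparent loss of one derivative is exactly what makes the $C^1$-regularity hypothesis on the coefficients indispensable.
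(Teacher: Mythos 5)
Your proof is correct and follows essentially the same route as the paper: vanishing on a full strip via the linear ODE in $\theta$ plus the standard unique continuation of~\cite{FHS}, then reduction to the integro-differential inequality of Theorem~\ref{thm:inequality} with the key integration by parts in $\tau$ to absorb the $\int E J\,\p_\tau u\,d\tau$ term. The only cosmetic difference is that the paper packages Step~2 by padding $(u,\tlambda)$ to a single $\C^{n+2N+1}$-valued first-order system and reusing the computation of Proposition~\ref{prop:vanish} verbatim, and it delegates your open-closed argument to the proof of Theorem~\ref{thm:inequality} itself.
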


\begin{proof} 
We first remark that $\tU$ must vanish on some strip
$]s_0-\eps,s_0+\eps[\times S^1\subset Z_h$. More precisely, let us
choose 
$(s_0,\theta_0)\in\cU$ and $\eps>0$ such that
$]s_0-\eps,s_0+\eps[\, \times \, ]\theta_0-\eps,\theta_0+\eps[\subset \cU$.
Then, for $s\in]s_0-\eps,s_0+\eps[$, we have that  $\tlambda(s,\cdot)$
solves a linear ODE on $S^1$ and vanishes at $\theta_0$, hence vanishes
identically. Thus $u$ solves $\p_s u +J\p_\theta u +Cu=0$ on
$]s_0-\eps,s_0+\eps[ \times S^1$, and therefore must also vanish
identically by the standard unique 
continuation property~\cite[Theorem~2.2,\, Proposition~3.1]{FHS}. In
particular, $\tU$ vanishes with all its derivatives along
$\{s_0\}\times S^1$. 

As in the proof
of Proposition~\ref{prop:vanish}, we can assume without loss of
generality that $J=i$. Let us denote
$$
\tV(s,\theta):=(u(s,\theta),\tlambda(s,\theta),0)\in\C^n\times
\C^{2N+1}.
$$
Then $\tV$ satisfies an equation of the form 
$$
\p_s \tV + i\p_\theta \tV + A(s,\theta)\tV + \int_{S^1}
B(s,\theta,\tau)\tV(s,\tau)\, d\tau =0,
$$
with $A,B$ of class $C^1$. As in
Proposition~\ref{prop:vanish}, we infer an inequality 
$$
|\Delta \tV|^2 \le 8\pi K\Big[ |\tV|^2 + |\nabla \tV|^2 + \int_{S^1}
|\tV|^2 \Big].
$$
The only difference with respect to Proposition~\ref{prop:vanish} is that the
function $A_4$ therein depends now on $\theta$. However, it is still
pointwise bounded and the same argument carries through. 

The conclusion follows from Theorem~\ref{thm:inequality}.
\end{proof} 

\begin{proof}[Proof of Proposition~\ref{prop:tunique}] The proof
  follows the same pattern as that of Proposition~\ref{prop:unique},
  and makes use of Proposition~\ref{prop:tvanish}. 

Let us assume without loss of generality that $\cU=I_\delta\times
I_\eps$, for some $\delta,\eps>0$. Since $\tlambda_0(s,\cdot)$ and
$\tlambda_1(s,\cdot)$ solve the same ODE on $S^1$ and coincide on
$I_\eps$, we infer that they coincide on $S^1$, for all $s\in
I_\delta$. By the unique continuation property for the Floer
equation~\cite[Proposition~3.1]{FHS}, we infer that $u_0=u_1$ 
on the strip $I_\delta\times S^1$.

Let $I\subset I_h$ be the set of points $s$ such that $\tU_0=\tU_1$ on
$\{s\}\times S^1$. Then $I$ is nonempty (it contains $I_\delta$),
closed, and we must prove that it is open. Let $s_0\in I$ be a point
on the boundary of a connected component of $I$ with nonempty
interior, and denote $\gamma:=\tU_0(s_0,\cdot)=\tU_1(s_0,\cdot)$. We
consider a trivialization of $\gamma^*(T\widehat W\times TS^{2N+1})$ of
the form $S^1\times \C^n\times \R^{2N+1}$. Then, for $s$ close to
$s_0$, we can view $\tU_0(s,\cdot)$ and $\tU_1(s,\cdot)$ as taking
values in $\C^n\times \R^{2N+1}$. The difference
$\tU := (u,\tlambda):=(u_0-u_1,\tlambda_0-\tlambda_1)$ satisfies an equation
of the form~\eqref{eq:teqCn} with smooth coefficients. The computation
is similar to the one in~\cite[Proposition~3.1]{FHS}, and we just
establish the second equation in~\eqref{eq:teqCn}. We have, for
suitable matrices $\widehat E$ and $E$, 
\begin{eqnarray*}
  \p_s\tlambda(s,\theta) & = & \int_{S^1} (\tau-\theta)_*
  \Big[\vec\nabla_\lambda \tH(\tU_0(s,\tau)) - 
\vec\nabla_\lambda \tH(\tU_1(s,\tau))\Big] \, d\tau \\
& = & \int_{S^1} (\tau-\theta)_* \widehat E(s,\tau)\tU(s,\tau)\, d\tau
  \\
&=& \int_{S^1} E(s,\theta,\tau)\tU(s,\tau)\, d\tau.
\end{eqnarray*}

The conclusion follows from Proposition~\ref{prop:tvanish}. 
\end{proof}

%%%%%%%%%%%%%%%%%%%%%%%%%%%%%%%%%%%%%%%%%%%%%% 
%%%%%%%%%%%%%%%%%%%%%%%%%%%%%%%%%%%%%%%%%%%%%% 
%%%%%%%%%%%%%%%% Section 7 %%%%%%%%%%%%%%%%%%% 
%%%%%%%%%%%%%%%%%%%%%%%%%%%%%%%%%%%%%%%%%%%%%% 
%%%%%%%%%%%%%%%%%%%%%%%%%%%%%%%%%%%%%%%%%%%%%% 

\section{Transversality in the \boldmath$S^1$-invariant case} \label{sec:transvS1}

We prove in this section that transversality for the $S^1$-invariant
Floer equations can be achieved within the following two classes of
Hamiltonians.  
\renewcommand{\theenumi}{{\bf \Alph{enumi}}}
\begin{enumerate}
\item {\bf Generic Hamiltonians}. We require such Hamiltonians $H$ to
  be admissible, regular, and to satisfy the following two conditions: 
\begin{itemize}
\item for all $(\gamma,\lambda)\in\cP(H)$ we have that $\gamma$ is a simple embedded curve;
\item for all distinct elements
  $(\gamma_1,\lambda_1),(\gamma_2,\lambda_2)\in\cP(H)$ we have
  $\gamma_1\neq \gamma_2$.  
\end{itemize}
We denote the class of generic Hamiltonians by $\cH_{\mathrm{gen}}$.
\item {\bf Split Hamiltonians}. We require such Hamiltonians to be
  admissible and of the form $K(x)+f(\lambda)$, with $K$ being
  $C^2$-small on $W$. Here $f$ is
  $S^1$-invariant and $K$ has either constant and nondegenerate
  $1$-periodic orbits, or nonconstant and transversally nondegenerate
  ones.  

We denote the class of split Hamiltonians by $\cH_{\mathrm{split}}$.
\end{enumerate}

We denote 
$$
\cH_*:=\cH_{\mathrm{gen}}\, \cup \, \cH_{\mathrm{split}}.
$$
\renewcommand{\theenumi}{\arabic{enumi}}
\begin{definition} \label{defi:strongH}
An admissible Hamiltonian $H\in\cH^{S^1}_N$ is called {\bf strongly
  admissible} if the following two conditions hold: 
\begin{enumerate} 
\item \label{item:strongH1} for every $(\gamma,\lambda)\in
  \cP(H)$ such that $\gamma$ is not constant, we have
  $$
X^\theta_{H_\lambda}(\gamma(\theta))\neq 0, \qquad \forall \, \theta\in
  S^1.
 $$
\item \label{item:strongH2} for every $(\gamma,\lambda)\in\cP(H)$ such
  that $\gamma$ is constant (equal to $x\in \widehat W$), there exists
  a neighbourhood $\cU$ of $\{x\}\times (S^1\cdot\lambda)$ in
  $\widehat W\times S^{2N+1}$ such that
  $H(\theta,x',\lambda')=K(x')+f(\lambda')$ for all $\theta\in S^1$
  and $(x',\lambda')\in \cU$.  Moreover, $x$ is an isolated critical
  point of $K$.
\end{enumerate}
We denote by $\cH'$ the class of strongly admissible Hamiltonians. 
\end{definition} 

We clearly have 
$$
\cH_*\subset \cH'.
$$

\renewcommand{\theenumi}{\arabic{enumi}}
\begin{definition} \label{defi:adaptedJ} Given $H\in \cH'$, an almost complex structure $J\in \cJ^{S^1}_N$ is called {\bf adapted
    to \boldmath$H$} if the following hold: 
\begin{enumerate} 
\item \label{item:adaptedJ1} for every
    $(\gamma,\lambda_0)\in \cP(H)$, we have 
$$
[J^\theta_\lambda X^\theta_{H_\lambda},
X^\theta_{H_\lambda}](\gamma(\theta)) \notin
\mathrm{Span}(J^\theta_\lambda X^\theta_{H_\lambda},
X^\theta_{H_\lambda}), \qquad \forall \, \theta\in S^1, \, \lambda\in
S^1\cdot \lambda_0.
$$
\item \label{item:adaptedJ2} for every $(\gamma,\lambda_0)\in \cP(H)$
  such that $\gamma$ is constant (equal to $x\in\widehat W$), there
  exists a neighbourhood $\cU$ of $\{x\}\times (S^1\cdot\lambda)$ in
  $\widehat W\times S^{2N+1}$ such that $J^\theta_\lambda$ is
  independent of $\theta$ and $\lambda$ on $\cU$, i.e. 
  $J^\theta_\lambda(x')=J(x')$ for all $(x',\lambda)\in\cU$ and
  $\theta\in S^1$. 
\end{enumerate} 
We denote by $\cJ'(H)\subset \cJ^{S^1}_N$ the set of
almost complex structures adapted to $H$.  
\end{definition} 

\begin{remark}
The set $\cJ'(H)$ is nonempty for every choice of strongly admissible
Hamiltonian $H$. This is proved by a
genericity argument: given a nonzero vector field $X$ along a curve,
one can choose generically a nonzero vector field $Y$ which is
linearly independent from $X$ along the same curve, and such that the
distribution spanned by $X$ and $Y$ is non-involutive and symplectic.  
\end{remark}

We denote 
$$
\cH_*\cJ':=\Bigg\{(H,J,g)\, : \, \begin{array}{l}H\in\cH_*, \ (J,g)\in\cJ'(H), \\
  J \mbox{ admissible, cylindrical for } t\ge 1,\\
  \qquad \mbox{independent of } (\theta,\lambda) \mbox{ if } H\in\cH_{\mathrm{split}} 
  \end{array}\Bigg\},
$$
and
$$
\cH\cJ':=\{(H,J,g)\, : \, H\in\cH', \ (J,g)\in\cJ'(H)\},
$$
so that $\cH_*\cJ'\subset \cH\cJ'$.

\medskip 

Let $H\in\cH^{S^1}_{N,\reg}$. A pair $(J,g)\in \cJ^{S^1}_N$ is called 
{\bf regular for \boldmath$H$} if the operator $D_{(u,\lambda)}$ is surjective 
for any $\op,\up\in\cP(H)$ and any $(u,\lambda)\in \widehat 
\cM(\op,\up;H,J,g)$. We denote the set of such regular pairs by 
$\cJ^{S^1}_{N,\reg}(H)$. 

\medskip 

The next result proves part~\ref{item:INTROb} of Theorem~A. 

\begin{theorem} \label{thm:transvS1}
There exists an open subset $\cH\cJ'_{\mathrm{reg}}\subset \cH\cJ'$
which is dense in a neighbourhood of $\cH_*\cJ'\subset \cH\cJ'$ and
consisting of triples $(H,J,g)$ such that  
$$
H\in\cH^{S^1}_{N,\mathrm{reg}},\qquad (J,g)\in\cJ^{S^1}_{N,\mathrm{reg}}(H).
$$
\end{theorem}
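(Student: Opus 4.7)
The plan is to mirror the structure of the proof of Theorem~\ref{thm:Jreg}, but to overcome the loss of perturbation directions caused by imposing $S^1$-invariance by exploiting the reduction $(u,\lambda)\mapsto \tU=(u,\tlambda)$ from Definition~\ref{defi:tilde}. I would first define
$$
\cH\cJ'_{\mathrm{reg}}:=\{(H,J,g)\in\cH\cJ'\, : \, H\in\cH^{S^1}_{N,\mathrm{reg}},\ (J,g)\in\cJ^{S^1}_{N,\mathrm{reg}}(H)\},
$$
and prove openness by combining Proposition~\ref{prop:indexMB} (Fredholm property in weighted Sobolev spaces), upper semi-continuity of the cokernel dimension, and a compactness argument for moduli spaces $\widehat\cM(S_\op,S_\up;H,J,g)$ of bounded energy (the asphericity hypothesis is used here in the split case to rule out bubbling). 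For density near $\cH_*\cJ'$, I would set up the universal moduli space: for $r$ large, consider the Banach manifold $\cB\times\cH_*^r\times \cJ'{}^r$ together with the Banach bundle whose fiber is $\cL^{p,d}$, and the section $f$ encoding the $S^1$-invariant parametrized Floer equations. The crux is to show that $f^{-1}(0)$ is a Banach submanifold, after which the Sard--Smale theorem and a Taubes argument conclude the proof.

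The main step is therefore to prove surjectivity of the vertical differential $df(u,\lambda,H,J,g)$ at every zero, where the allowed perturbations $(h,Y,A)$ are $S^1$-invariant. The key observation is that $S^1$-invariant perturbations of $(H,J)$ are in bijection with arbitrary perturbations of $(\tH,\tJ)$, and that $df$ computed in the ``hatted'' variables is precisely the linearization of system~(\ref{eq:tFloer1}--\ref{eq:tFloer3}) at $\tU$, enlarged by perturbations $\tilde h,\tY$ with no symmetry constraint. Suppose $(\eta,k)\in\cL^q$ annihilates $\im\, df$; then $(\eta,k)$ lies in the kernel of the formal adjoint $D^*_{(u,\lambda)}$ and is smooth by elliptic regularity. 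To conclude $(\eta,k)\equiv 0$, I would localize the perturbations $\tilde h,\tY$ near a suitably chosen ``regular point'' $(s_0,\theta_0)$ of $\tU$, where the analogue of Proposition~\ref{prop:regularpts} holds for $\tU$ (using Proposition~\ref{prop:tunique} as the unique-continuation input in place of Proposition~\ref{prop:unique}). The required injectivity of the map $s\mapsto \tU(s,\theta_0)$ modulo the asymptotes is guaranteed precisely by the hypothesis that $H\in\cH_*$: the simplicity and pairwise distinctness conditions defining $\cH_{\mathrm{gen}}$ prevent the trajectory from running into its own asymptotes.

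The delicate point, and expected main obstacle, is to produce enough perturbations $\tY$ at the chosen regular point. Near a nonconstant critical orbit, condition~\ref{item:strongH1} ensures $X_{H_\lambda}\neq 0$, and the non-involutivity condition~\ref{item:adaptedJ1} on $\cJ'(H)$ guarantees that the vectors $X_H$, $JX_H$ and $[JX_H,X_H]$ span a $3$-dimensional subspace; combined with the freedom to perturb $\tJ$ compatibly with $\widehat\omega$, this produces a dense family of perturbations $\tY \tJ \partial_s u$ in $T\widehat W$ at the regular point, forcing $\eta(s_0,\theta_0)=0$; and perturbations $\tilde h$ with prescribed $\lambda$-gradient force $k(s_0)=0$, exactly as in Case~3 of the proof of Theorem~\ref{thm:Jreg}. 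Near a constant critical orbit, conditions~\ref{item:strongH2} and~\ref{item:adaptedJ2} make the data locally a product $K(x)+f(\lambda)$ with $J$ independent of $(\theta,\lambda)$, and the problem decouples: generic choice of $(K,J)$ produces Morse--Smale Floer data on $\widehat W$, while generic $(f,g)$ is Morse--Smale on $S^{2N+1}$, so the corresponding linearization is surjective. For $H\in\cH_{\mathrm{split}}$ this decoupling extends globally and the argument reduces to two independent classical transversality statements; asphericity is used precisely here to exclude sphere bubbling in the $u$-component. Gluing these local surjectivity statements together via unique continuation for $D^*$ completes the proof.
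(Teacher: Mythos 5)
Your high-level strategy is the right one and matches the paper's: set up the universal moduli space, use the correspondence between $S^1$-invariant perturbations of $(H,J)$ and unconstrained perturbations of $(\tH,\tJ)$, localize at injective points of $\tU$ (Proposition~\ref{prop:tinjective}), and feed unique continuation for $\tU$ (Proposition~\ref{prop:tunique}) into the argument. However, there are genuine gaps in the case analysis. First, you never treat the case $\p_s U\equiv(0,0)$, i.e. the constant trajectory at a critical point $(\gamma,\lambda_0)$. No perturbation of $(H,J,g)$ can help there: the operator $D_{(u,\lambda)}:\cW^{1,p,d}\to\cL^{p,d}$ has index $1$ and its kernel always contains $(\dot\gamma,-X)$, so one must prove directly that the kernel is exactly one-dimensional. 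The paper does this (Case~1) via the convexity estimate $\p_s^2\|V^\perp\|^2\ge 2c\|V^\perp\|^2$ for the component orthogonal to $(\dot\gamma,-X)$, using nondegeneracy of the $S^1$-orbit and the maximum principle. Your proposed substitute — ``generic $(K,J)$ is Morse--Smale on $\widehat W$ and generic $(f,g)$ is Morse--Smale on $S^{2N+1}$'' — cannot work: an $S^1$-invariant $f$ on $S^{2N+1}$ is never Morse, and the constant trajectories are exactly the index-$1$ Morse--Bott configurations for which surjectivity must be a theorem, not a genericity statement.

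Second, you skip the case $\p_su\equiv 0$, $\dot\lambda\not\equiv 0$ (the loop component sits on a periodic orbit while $\lambda$ flows). Here there are no injective points of $u$, so the perturbations $\tY$ and $\th$ localized via $u$ are unavailable; the paper kills $k$ with an $S^1$-invariant metric perturbation $A$ localized near $S^1\cdot\lambda(s_0)$, and this requires that $\lambda$ be injective modulo the $S^1$-action — precisely condition~\eqref{eq:final}. Together with condition~\eqref{eq:final2} (needed when both asymptotes of $u$ are constant and $u,\lambda$ are nonconstant gradient trajectories near $\pm\infty$), these open conditions are what force the conclusion to hold only on a \emph{neighbourhood} of $\cH_*\cJ'$ rather than on all of $\cH\cJ'$; your proposal does not identify them, so it cannot explain the precise form of the statement. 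Two smaller points: the non-involutivity condition~\ref{item:adaptedJ1} is used to prove unique continuation (Lemma~\ref{lem:tunique}) and hence density of injective points, not to generate perturbations $\tY$ (for that, $\p_su(s_0,\theta_0)\neq 0$ suffices); and asphericity enters not to exclude bubbling in a compactness argument, but to guarantee (via~\cite{SZ}) that trajectories between constant orbits of a $C^2$-small split Hamiltonian are gradient trajectories, which is how~\eqref{eq:final2} is verified for $(H_0,J_0)$.
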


\begin{remark}[on symplectic asphericity] \label{rmk:sympasph}
 The previous theorem can be rephrased by
 saying that we can achieve transversality within the special class of adapted almost
 complex structures, after possibly perturbing a Hamiltonian which is
 either generic (in the sense that it belongs to
 $\cH_{\mathrm{gen}}$), or split (in the sense that it belongs to
 $\cH_{\mathrm{split}}$). We would like to draw the reader's attention
 to the fact that, in the case of split Hamiltonians, our proof
 uses the assumption that $W$ is symplectically aspherical and the
 Hamiltonian is $C^2$-small on $W$. For
 generic Hamiltonians, these assumptions are not used.  
\end{remark}

We denote in this section by $X$ the infinitesimal generator of the
$S^1$-action on $S^{2N+1}$. Also, we make extensive use of the
notation $\tH$, $\tJ$, $\tU$ introduced in
Definition~\ref{defi:tilde}, and of the fact that $(\tH,\tJ,\tU=(u,\tlambda))$
solve~(\ref{eq:tFloer1}--\ref{eq:tasymptotic}) if and only if
$(H,J,U=(u,\lambda))$
solve~(\ref{eq:Floer1par}--\ref{eq:asymptoticpar}).  

\medskip  

Our first result is an analogue of Lemma~\ref{lem:unique}. We recall
the notation $V_h(s_0,\theta_0):=]s_0-h,s_0+h[ \, \times \,
]\theta_0-h,\theta_0+h[$. 

\begin{lemma} \label{lem:tunique} 
Let $H\in\cH^{S^1}_{N,\mathrm{reg}}\cap \cH'$ and $(J,g)\in\cJ'(H)$. 
Let $\op=(\og,\olambda),\up=(\ug,\ulambda)\in\cP(H)$ and 
$\tU_i=(u_i,\tlambda_i):\R\times S^1\to \widehat W\times S^{2N+1}$,
$i=0,1$ be solutions of
equations~(\ref{eq:tFloer1}--\ref{eq:tasymptotic}).  

We assume that, for some $(s_0,\theta_0)\in \R\times S^1$,  
$$
\tU_0(s_0,\theta_0)=\tU_1(s_0,\theta_0) \ \mbox{ and } \  du_0(s_0,\theta_0), \
d\tU_1(s_0,\theta_0)  \mbox{ are injective.} 
$$
We also assume there exists $h_0>0$ such that, for all $0<h'\le h_0$,
there exists $h>0$ with the following property: for any 
$(s,\theta)\in V_h(s_0,\theta_0)$, there exists
$(s',\theta')\in V_{h'}(s_0,\theta_0)$ such that 
$$
\tU_0(s,\theta)=\tU_1(s',\theta').
$$

There exists a neighbourhood $\cU\subset \widehat W\times S^{2N+1}$ of
$\ug(S^1)\times (S^1\cdot\ulambda)$, independent of $\tU_0$ and
$\tU_1$, such that,  
if $\tU_0(s_0,\theta_0)\in\cU$, the above assumptions imply
$\tU_0=\tU_1$ (the same holds for the asymptote at $-\infty$). 
\end{lemma}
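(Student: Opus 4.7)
The proof adapts the strategy of Lemma~\ref{lem:unique}, but must contend with the fact that $\tU$ is a pair $(u,\tlambda)$ subject to the pointwise constraint~(\ref{eq:tFloer3}) and that, near the asymptote, $\p_s u_0$ becomes small, so the rigidity argument of Lemma~\ref{lem:unique} based on the linear independence of $\p_s u$ and $\tJ \p_s u$ cannot be applied in isolation. The key new ingredient is the orthogonality $\dot\lambda \perp X_\lambda$ forced by $S^1$-invariance, which makes the $\tlambda$-equations alone rigid enough to determine half of the jet of $F$.

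First, since $d\tU_1(s_0,\theta_0)$ is injective, after shrinking $h'$ we may assume $\tU_1$ is a smooth embedding on $V_{h'}(s_0,\theta_0)$; the density hypothesis then produces a smooth map $F=(\phi,\psi):V_h(s_0,\theta_0)\to V_{h'}(s_0,\theta_0)$ with $F(s_0,\theta_0)=(s_0,\theta_0)$ and $\tU_0=\tU_1\circ F$. Writing $\lambda_i(s):=\tlambda_i(s,0)$ and $\mu:=\theta-\psi$, the identity $\tlambda_0=\tlambda_1\circ F$ reads $\lambda_0(s)=\mu(s,\theta)\cdot\lambda_1(\phi(s,\theta))$. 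Differentiating in $\theta$ and in $s$ and using that the $S^1$-action is by isometries, yields the vector identities
\begin{align*}
0 &= (1-\p_\theta\psi)\,X_{\lambda_0} + \p_\theta\phi\cdot\mu_*\dot\lambda_1(\phi),\\
\dot\lambda_0 &= -\p_s\psi\,X_{\lambda_0} + \p_s\phi\cdot\mu_*\dot\lambda_1(\phi).
\end{align*}
Differentiating the $S^1$-invariance $H(\theta+\tau,\cdot,\tau\cdot\lambda)=H(\theta,\cdot,\lambda)$ in $\tau$ and integrating over $\theta$ gives $g(\int_{S^1}\vec\nabla_\lambda H\,d\theta,X_\lambda)=-\int_{S^1}\p_\theta H\,d\theta=0$, so $\dot\lambda_i\perp X_{\lambda_i}$ for $i=0,1$; since $\mu_*$ is an isometry carrying $X_{\lambda_1}$ to $X_{\lambda_0}$, we also get $\mu_*\dot\lambda_1\perp X_{\lambda_0}$. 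Projecting the two displayed identities onto $X_{\lambda_0}$ and its orthogonal complement, and using that $X_{\lambda_0}\neq 0$ (the $S^1$-action on $S^{2N+1}$ is free), we conclude $\p_\theta\psi=1$ and $\p_s\psi=0$.

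Next, substitute $u_0=u_1\circ F$ into~(\ref{eq:tFloer1}), use the same equation for $u_1$ at $F(s,\theta)$, and invoke the already established $\p_\theta\psi=1,\ \p_s\psi=0$. After collecting terms and using $\p_\theta u_1-X=\tJ\p_s u_1$ (from the Floer equation for $u_1$) to cancel the contributions from $\p_\theta u_1$ and $X$, the residue is
$$
(\p_s\phi-1)\,\p_s u_1(F) + \p_\theta\phi\cdot \tJ\,\p_s u_1(F) = 0.
$$
Since $\tJ^2=-\id$, the vectors $\p_s u_1$ and $\tJ\p_s u_1$ are linearly independent wherever $\p_s u_1\neq 0$, so on the open set $\{\p_s u_1(F)\neq 0\}$ we get $\p_s\phi=1$ and $\p_\theta\phi=0$. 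This set is dense in the domain of $F$: otherwise $\p_s u_1$ would vanish on a nonempty open subset of $\R\times S^1$, which after differentiating~(\ref{eq:tFloer1}) in $s$ is ruled out by unique continuation (Proposition~\ref{prop:tvanish}), since $\p_s\tU_1\equiv(0,0)$ would force $\tU_1$ to coincide with its asymptote and contradict the injectivity of $d\tU_1(s_0,\theta_0)$. By continuity, $\p_s\phi\equiv 1$ and $\p_\theta\phi\equiv 0$ throughout, so $dF=\id$, and the initial condition gives $F=\id$ on $V_h(s_0,\theta_0)$. Hence $\tU_0=\tU_1$ on $V_h(s_0,\theta_0)$, and Proposition~\ref{prop:tunique} propagates the equality to all of $\R\times S^1$.

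The role of the neighbourhood $\cU$ of the asymptotic loop is to make the (strong) admissibility of $H$ and the adaptedness of $J$ uniformly usable in the regime $\tU_0(s_0,\theta_0)\in\cU$: if $\ug$ is non-constant, condition~\ref{item:strongH1} of Definition~\ref{defi:strongH} guarantees $X^\theta_{H_\ulambda}(\ug(\theta))\neq 0$, securing the non-degeneracy of $X$ along the trajectory needed in the cancellation above; if $\ug$ is constant, condition~\ref{item:strongH2} of Definition~\ref{defi:strongH} together with condition~\ref{item:adaptedJ2} of Definition~\ref{defi:adaptedJ} make $H$ split and $J$ independent of $(\theta,\lambda)$ on $\cU$, so that the system locally decouples into a standard Floer equation for $u$ and an independent ODE for $\lambda$, reducing unique continuation to Lemma~\ref{lem:unique} applied in each factor separately. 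The main obstacle is the $\tlambda$-analysis: it is precisely the $S^1$-induced orthogonality $\dot\lambda\perp X_\lambda$ that decouples the $\tlambda$-system enough to determine $\psi$ \emph{without} using the $u$-equation, which is indispensable because near the asymptote $\p_s u$ is too small to recover all four jet coefficients of $F$ by the method of Lemma~\ref{lem:unique} alone.
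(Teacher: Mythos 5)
The central step of your argument --- deriving $\p_\theta\psi=1$ and $\p_s\psi=0$ from the $\tlambda$-equations alone --- rests on the claim that $\dot\lambda_i\perp X_{\lambda_i}$ along Floer trajectories, and this claim is false for general $H\in\cH^{S^1}_N$. Differentiating the invariance $H(\theta+\tau,x,\tau\lambda)=H(\theta,x,\lambda)$ in $\tau$ gives the pointwise identity $\langle\vec\nabla_\lambda H(\theta,x,\lambda),X_\lambda\rangle=-\p_\theta H(\theta,x,\lambda)$, where $\p_\theta$ is the partial derivative in the first slot only. After substituting $x=u(s,\theta)$ and integrating, $\int_{S^1}\p_\theta H(\theta,u(s,\theta),\lambda)\,d\theta$ is \emph{not} the integral of a total derivative (the total derivative is $\p_\theta H+d_xH\cdot\p_\theta u$), so it need not vanish; one finds $g(\dot\lambda,X_\lambda)=\int_{S^1}\omega\big(X_{H_\lambda}(u),J\p_s u\big)\,d\theta$, which is nonzero in general. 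The orthogonality does hold at critical points and for split Hamiltonians $K(x)+f(\lambda)$ (which is exactly where the paper invokes it, in Step~4 of the proof of Proposition~\ref{prop:tinjective}), but the lemma must also cover $H\in\cH_{\mathrm{gen}}$, where $u$ and $\lambda$ are genuinely coupled. Without the orthogonality your projections do not determine $\psi$, and the decoupling strategy collapses. A telltale sign is that your main computation never uses condition~(1) of Definition~\ref{defi:adaptedJ}. The paper's proof instead substitutes into the $u$-equation only and extracts all four relations on $dF$ simultaneously, by showing that $\p_su_0$, $\p_\theta u_0$, $X_{\tH}$, $\tJ X_{\tH}$ are linearly independent on a dense set; that independence is precisely what the non-involutivity hypothesis $[\tJ X_{\tH},X_{\tH}]\notin\mathrm{Span}(\tJ X_{\tH},X_{\tH})$ on $\cU$ and the nonvanishing of $X_{\tH}$ from strong admissibility buy you, and it is why the neighbourhood $\cU$ appears in the statement at all.

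A secondary gap: your density claim for $\{\p_s u_1\neq 0\}$ does not follow as written. If $\p_s u_1$ vanishes on an open set you cannot conclude $\p_s\tU_1\equiv(0,0)$ there, since $\p_s\tlambda_1$ may remain nonzero (this configuration genuinely occurs and is handled separately as Case~2 in the proof of Theorem~\ref{thm:transvS1}), and Proposition~\ref{prop:tvanish} requires the full pair to vanish on an open set. Note also that the hypotheses give injectivity of $du_0$ and of $d\tU_1$, but not of $du_1$; the paper's choice of direction $\tU_1=\tU_0\circ F$ is made precisely so that the linear-independence argument runs on $u_0$, for which the needed injectivity is assumed.
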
 

\begin{proof} By Proposition~\ref{prop:tunique}, it is enough to prove
  that $\tU_0$ and $\tU_1$ coincide on some open
  neighbourhood of $(s_0,\theta_0)$. Let us choose $0<h'\le h_0$ small
  enough so that $\tU_1:V_{h'}(s_0,\theta_0)\to \widehat W\times
  S^{2N+1}$ is an embedding. Upon further diminishing the
  corresponding $h>0$, we can assume that $u_0:V_h(s_0,\theta_0)\to
  \widehat W$ is also an embedding. 

  By assumption we have $\tU_0(V_h(s_0,\theta_0))\subset
  \tU_1(V_{h'}(s_0,\theta_0))$. We can therefore define a smooth
  embedding $G:=(\tU_1)^{-1}\circ \tU_0:V_h(s_0,\theta_0)\to
  V_{h'}(s_0,\theta_0)$. Moreover, we have by assumption that
  $(s_0,\theta_0)=G(s_0,\theta_0)\in\mathrm{im}(G)$. There exists
  therefore $0<h''<h'$ such that $V_{h''}(s_0,\theta_0)\subset
  \mathrm{im}(G)$. By the implicit function theorem, we obtain a
  smooth embedding $F:=G^{-1}:=(\phi,\psi):V_{h''}(s_0,\theta_0)\to
  V_h(s_0,\theta_0)$. It follows from the definition that 
$$
\tU_1(s,\theta)=\tU_0(\phi(s,\theta),\psi(s,\theta))
$$
for all $(s,\theta)\in V_{h''}(s_0,\theta_0)$. Substituting this relation
into~\eqref{eq:tFloer1} for $u_1$ we obtain 
\begin{eqnarray}
 0 & = & \p_s u_1 (s,\theta) + \tJ_{\tlambda_1}(u_1)\big(\p_\theta u_1
 (s,\theta) - X_{\tH_{\tlambda_1}}(u_1) \big) \nonumber \\ 
 & = & \p_su_0(F) \p_s \phi + \p_\theta u_0(F) \p_s \psi \nonumber \\
 &  & + \ \tJ_{\tlambda_0(F)}(u_0(F))\big[\p_s
 u_0(F)  \p_\theta \phi + \p_\theta u_0(F) \p_\theta\psi -
 X_{\tH_{\tlambda_0(F)}}(u_0(F))\big]
 \nonumber \\ 
 & = & \p_su_0(F) \p_s \phi + \p_\theta u_0(F) \p_s \psi + 
\tJ_{\tlambda_0(F)}\Big[\tJ_{\tlambda_0(F)}(-\p_\theta u_0 +
 X_{\tH_{\tlambda_0(F)}})  \p_\theta \phi \nonumber \\
 &  & + \ 
 (\tJ_{\tlambda_0(F)}\p_s u_0 + X_{\tH_{\tlambda_0(F)}})
 \p_\theta\psi - X_{\tH_{\tlambda_0(F)}}\Big] \nonumber \\ 
 & = & (\p_s\phi-\p_\theta\psi)\p_su_0(F) + (\p_s\psi +
 \p_\theta\phi)\p_\theta u_0(F) - \p_\theta\phi \,
 X_{\tH_{\tlambda_0(F)}}(u_0(F)) \nonumber \\ 
 & &  -
 (1-\p_\theta\psi)\,  \tJ_{\tlambda_0(F)}(u_0(F))
 X_{\tH_{\tlambda_0(F)}}(u_0(F)). \label{eq:tdu} 
\end{eqnarray}
The third equality uses the Floer equation~\eqref{eq:tFloer1} for
$(u_0, \tlambda_0)$. 

By Definition~\ref{defi:adaptedJ}, we can choose a neighbourhood
$\cU\subset \widehat W\times S^{2N+1}$ of $\ug(S^1)\times(S^1\cdot
\ulambda)$ such that  
$$
\big[\tJ_\lambda
X_{\tH_\lambda},X_{\tH_\lambda}\big](x)
\notin \mathrm{Span}\big(\tJ_\lambda(x)
X_{\tH_\lambda}(x),X_{\tH_\lambda}(x)\big)
$$
for all $(x,\lambda)\in \cU$. 

Up to further diminishing $h>0$, we can assume that
$\tU_0(V_h(s_0,\theta_0))\subset \cU$.  We now claim that the four
vectors 
$\p_s u_0$, $\p_\theta u_0$, $X_{\tH_{\tlambda_0}}(u_0)$,
$\tJ_{\tlambda_0}(u_0) X_{\tH_{\tlambda_0}}(u_0)$ are linearly
independent on an open dense subset of $V_h(s_0,\theta_0)$. This
follows from the argument in~\cite[Lemma~7.7]{FHS}. More precisely,
assume by contradiction the existence of a 
nonempty open subset $\Omega\subset
V_h(s_0,\theta_0)$, such that these four 
vectors are linearly dependent on $\Omega$.

Let us first use the assumption of strong admissibility on $H$. 
Since $u_0$ is an embedding on 
$V_h(s_0,\theta_0)$, we can further assume, after slightly
moving the base point $(s_0,\theta_0)$, that
$u_0(V_h(s_0,\theta_0))$ 
does not intersect the geometric image of $\ug$. Also, by assumption,
$\tlambda_0$ is close to $S^1\cdot \ulambda$, and therefore 
$X_{\tH_{\tlambda_0}}(u_0)\neq 0$ on $V_h(s_0,\theta_0)$. 

On the other hand, by assumption the vectors $\p_su_0$ and $\p_\theta
u_0$ are linearly independent on $V_h(s_0,\theta_0)$. Let us use the
shorthand notation $\tJ=\tJ_{\tlambda_0}$ and
$X_\tH=X_{\tH_{\tlambda_0}}$. Since $\p_\theta u_0 = \tJ\p_s u_0 +
X_\tH(u_0)$, the linear dependence of the above four vectors
on $\Omega$ is equivalent to the linear dependence of $\p_s u_0,
\tJ\p_s u_0, X_\tH, \tJ X_\tH$. This in turn implies that 
$\p_s u_0\in \mathrm{Span}(\tJ X_\tH,X_\tH)$, 
i.e. there exist smooth functions $a,b:\Omega\to \R$ such that 
$$
\p_s u_0 = a \tJ X_\tH(u_0) + bX_\tH(u_0).  
$$
From $\p_\theta u_0=\tJ\p_s u_0 + X_\tH$ we also obtain 
$$
\p_\theta u_0 = b \tJ X_\tH(u_0) + (1-a)
X_\tH(u_0). 
$$
We now use the fact that $[\p_su_0,\p_\theta u_0]=0$ on $\Omega$ to obtain 
\begin{equation} \label{eq:tbracket}
(a^2+b^2-a)[\tJ X_\tH,X_\tH]=(\p_\theta a - \p_s b) \tJ X_\tH + 
(\p_s a + \p_\theta b) X_\tH.
\end{equation}
Note that the linear independence of $\p_s u_0$ and $\p_\theta u_0$ is
equivalent to the condition $a^2+b^2-a\neq 0$. We infer that, for all
$(s,\theta)\in \Omega$, we have 
$$
\big[\tJ_{\tlambda_0(s,\theta)}
X_{\tH_{\tlambda_0(s,\theta)}},X_{\tH_{\tlambda_0(s,\theta)}}\big]
(u_0(s,\theta)) \in \mathrm{Span}(\tJ_{\tlambda_0(s,\theta)}
X_{\tH_{\tlambda_0(s,\theta)}},X_{\tH_{\tlambda_0(s,\theta)}}).
$$
This contradicts our choice of $\cU$. We thus proved that the four vectors 
$\p_s u_0$, $\p_\theta u_0$, $X_{\tH_{\tlambda_0}}(u_0)$,
$\tJ_{\tlambda_0}(u_0) X_{\tH_{\tlambda_0}}(u_0)$ are linearly
independent on an open dense subset  $\cV\subset V_h(s_0,\theta_0)$.

Since $F:V_{h''}(s_0,\theta_0)\to V_h(s_0,\theta_0)$ is an embedding,
we infer that $F^{-1}(\cV)$ is open and
nonempty. Equation~\eqref{eq:tdu} now implies that $\p_s \phi - \p_\theta \psi=0$,
$\p_s \psi + \p_\theta \phi=0$, $\p_\theta\phi=0$, and
$1-\p_\theta\psi=0$, so that $F(s,\theta)=(s+\os,\theta+\otheta)$ on
$F^{-1}(\cV)$ for suitable constants $\os$ and $\otheta$. Since
$F(s_0,\theta_0)=(s_0,\theta_0)$, we must have 
$\os=0$ and $\otheta=0$, and therefore $\tU_1=\tU_0$ on
$F^{-1}(\cV)$. This concludes the proof.  
\end{proof}

\bigskip 

The next Lemma is the analogue of Lemma~7.6 in~\cite{FHS}. 
 
\begin{lemma} \label{lem:indep} 
Let $\tU=(u,\tlambda)$ be a solution of
(\ref{eq:tFloer1}-\ref{eq:tasymptotic}), and assume $\p_s\tU\not\equiv
(0,0)$.   

(i) If $du\not \equiv 0$, the set of points $(s,\theta)\in
\R\times S^1$ such that  
\begin{equation} \label{eq:vectors}
\p_s\tU(s,\theta) \quad \mbox{and} \quad 
\p_\theta\tU(s,\theta) 
\end{equation}
are linearly independent is open and dense in $\{(s,\theta)\, : \,
du(s,\theta)\neq 0\}$.  

(ii) If $du \equiv 0$, the vectors are linearly independent on
$\R\times S^1$.   
\end{lemma}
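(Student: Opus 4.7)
\emph{Plan.} Following the template of \cite[Lemma~7.6]{FHS}, adapted to our parametrized $S^1$-invariant setting, I would exploit the $S^1$-invariance of the data together with unique continuation for the classical Floer equation.

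For (ii), when $du\equiv 0$ the map $u$ is constant at some $x_0\in\widehat W$, and~\eqref{eq:tFloer2} reduces to the autonomous gradient flow $\dot\lambda=\vec\nabla F_{x_0}(\lambda)$, with $F_{x_0}(\lambda):=\int_{S^1}H(\theta,x_0,\lambda)\,d\theta$. The diagonal $S^1$-invariance of $H$ implies the $S^1$-invariance of $F_{x_0}$, so $\vec\nabla F_{x_0}$ is pointwise $g$-orthogonal to the infinitesimal generator $X$. Since $\p_s u\equiv 0$ and $\p_s\tU\not\equiv(0,0)$, we have $\dot\lambda\not\equiv 0$; uniqueness for this autonomous flow then yields $\dot\lambda(s)\neq 0$ for all $s$. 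Consequently $\dot\lambda(s)$ and $X_{\lambda(s)}$ are nonzero and orthogonal, and applying the isomorphism $(-\theta)_*$ transfers this to linear independence of $\p_s\tlambda$ and $\p_\theta\tlambda=-X_{\tlambda}$ everywhere, which is the claim since $\p_s u=\p_\theta u=0$.

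For (i) openness is immediate, and I would prove density in $\{du\neq 0\}$ by contradiction: suppose $\p_s\tU$ and $\p_\theta\tU$ are linearly dependent on a nonempty open $\Omega\subset\{du\neq 0\}$. Projecting to $T\widehat W$ forces linear dependence of $\p_s u$ and $\p_\theta u$ on $\Omega$ (otherwise $du$, and hence $d\tU$, would have rank $2$). Since $\p_\theta\tU$ has nowhere-vanishing $S^{2N+1}$-component $-X_{\tlambda}$, we may write $\p_s\tU=f\,\p_\theta\tU$ on $\Omega$ for some $f\in C^\infty(\Omega)$. Comparing $S^{2N+1}$-components via $\p_s\tlambda(s,\theta)=(-\theta)_*\dot\lambda(s)$ and $X_{\tlambda(s,\theta)}=(-\theta)_*X_{\lambda(s)}$ gives $\dot\lambda(s)=-f(s,\theta)X_{\lambda(s)}$, forcing $f=f(s)$ to be independent of $\theta$ and showing that $\lambda$ moves along a single $S^1$-orbit over $\pi_s(\Omega)$.

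Solving $\dot\phi=-f$ and setting $\hat u(s,\theta):=u(s,\theta+\phi(s))$, a short computation using $\p_s u=f\p_\theta u$ yields $\p_s\hat u\equiv 0$ on the reparametrized set. The $S^1$-invariance of $(H,J)$ undoes the $\lambda$-motion after this reparametrization, so $\hat u$ satisfies an honest Floer equation with fixed Hamiltonian $H(\cdot,\cdot,\lambda_0)$ and almost complex structure $J^\cdot_{\lambda_0}$. Unique continuation for the Floer equation~\cite[Proposition~3.1]{FHS} then propagates $\p_s\hat u\equiv 0$ globally by the standard open-closed argument, so $\p_s\tU=f\,\p_\theta\tU$ on all of $\R\times S^1$. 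Comparing with the asymptotic conditions~\eqref{eq:tasymptotic}, the structure of the asymptotes as $S^1$-orbits of critical points forces $\p_s\tU\equiv(0,0)$, contradicting the standing hypothesis. The main obstacle will be the globalization step: $f$ and $\phi$ are initially defined only on $\Omega$, and turning the local proportionality into a global Floer equation for $\hat u$—so that Proposition~3.1 of~\cite{FHS} applies—requires carefully combining the pointwise constraint $\dot\lambda\parallel X_\lambda$ on $\pi_s(\Omega)$ with the open-closed propagation of the vanishing of $\p_s\hat u$.
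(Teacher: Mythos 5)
Your part~(ii) is correct and is essentially the paper's argument: $S^1$-invariance of $F_{x_0}$ gives $\dot\lambda\perp X$, and the autonomous ODE plus $\p_s\tU\not\equiv 0$ gives $\dot\lambda\neq 0$ everywhere. The setup of part~(i) is also fine (openness; $\p_s\tU=f\,\p_\theta\tU$ on $\Omega$ with $f=f(s)$, using that the $S^{2N+1}$-component of $\p_\theta\tU$ is $-X_{\tlambda}\neq 0$). The gap is in the reparametrization step. With $\lambda(s)=\phi(s)\cdot\lambda_0$ and $\hat u(s,\theta)=u(s,\theta+\phi(s))$, the chain rule combined with the invariance relations $J^{\theta}_{\tau\lambda}=J^{\theta-\tau}_{\lambda}$ and $H_{\tau\lambda}(\theta,\cdot)=H_{\lambda}(\theta-\tau,\cdot)$ yields
$$
\p_s\hat u-\dot\phi\,\p_\theta\hat u+J^\theta_{\lambda_0}(\hat u)\big(\p_\theta\hat u-X^\theta_{H_{\lambda_0}}(\hat u)\big)=0,
$$
so $\hat u$ does \emph{not} satisfy an honest Floer equation unless $\dot\phi=-f$ is constant: there is an extra first-order term, and $\phi$ is defined only over $\pi_s(\Omega)$. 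You never prove that $f$ is constant in $s$, and this is precisely the crux. Without it, neither \cite[Proposition~3.1]{FHS} nor Proposition~\ref{prop:tunique} applies to $\hat u$, and the globalization you yourself flag as ``the main obstacle'' cannot be carried out, because the objects you would need to propagate ($f$, $\phi$) do not exist outside $\Omega$.

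The paper closes exactly this gap by a computation you are missing: writing $\p_\theta\tU=\mu(s)\,\p_s\tU$ and equating the mixed partials $d_\theta d_s(\tH\circ\tU)=d_sd_\theta(\tH\circ\tU)$ gives $\mu'\,|\p_su|^2=0$, hence $\mu$ is \emph{constant} on $\Omega$ (this uses $du\neq 0$ there, which is why (i) is only asserted on $\{du\neq 0\}$). Once $\mu$ is constant, the linear relation integrates to $\tU(s-\mu\tau,\theta+\tau)=\tU(s,\theta)$ for small $\tau$ on an open set; both sides are genuine solutions of the $(s,\theta)$-independent system (\ref{eq:tFloer1}--\ref{eq:tFloer3}), so Proposition~\ref{prop:tunique} --- unique continuation for the coupled integro-differential system, not merely for the Floer equation --- makes them equal everywhere, and the contradiction is that the shifted solution has asymptote $(-\tau)\cdot\op\neq\op$ because $S^1$ acts freely on $S^{2N+1}$. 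Your construction could be repaired along similar lines (for instance, once $\p_s\hat u=0$ locally, differentiating the perturbed equation above in $s$ forces either $\ddot\phi=0$ or $\p_\theta\hat u=0$, and the latter is excluded on $\{du\neq 0\}$ since also $\p_su=f\,\p_\theta u$), but some argument for the constancy of $f$ must be supplied before any unique continuation theorem can be invoked.
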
 
 
 \begin{remark}
The key point in the statement is that $\p_s \tU$ and $\p_\theta \tU$
lie in the kernel of the operator which linearizes equations (\ref{eq:tFloer1}--\ref{eq:tFloer3}),
since the latter are independent of $s$ and $\theta$. Equivalently, $(\p_s u, \dot\lambda)$
 and $(\p_\theta u, -X)$ lie in the kernel of the linearized operator $D_{(u,\lambda)}$.
 \end{remark}

 \begin{proof}
 Openness is clear by continuity of $\p_s \tU$ and $\p_\theta \tU$. To prove density,
 we argue by contradiction and assume that $\p_s\tU$ and
 $\p_\theta\tU$ 
 are linearly dependent on some open set $\Omega\subset \R\times
 S^1$. By Proposition~\ref{prop:regularpts}(i), the set of points
 where $\p_s \tU\neq (0,0)$ is open and dense in $\R\times S^1$, so that
 we can assume without loss of generality that $\p_s\tU\neq (0,0)$ on
 $\Omega$. We thus find a smooth function $\mu:\Omega\to \R$ such that 
 $\p_\theta \tU(s,\theta) = \mu(s,\theta) \p_s \tU(s,\theta)$
 for all $(s,\theta)\in\Omega$. More explicitly,
 $$ 
 (\p_\theta u(s,\theta), (-\theta)_* X_{\lambda(s)}) = \mu(s,\theta) 
 (\p_s u(s,\theta), (-\theta)_* \p_s \lambda(s)) .
 $$
 Since $X\neq 0$, we infer $\mu\neq
 0$. Since $X_{\lambda(s)}$ and $\p_s\lambda(s)$ do not depend on
 $\theta$, we infer that the same holds for $\mu$,  so that we have $$ 
 \mu(s,\theta)=\mu(s).
$$

In the computations that follow we denote total derivatives by $d$,
and partial derivatives by $\p$. 

We compute
\begin{eqnarray*}
d_s(\tH\circ \tU) & = & \p_x \tH \cdot \p_s u + \p_\lambda \tH \cdot \p_s\tlambda \\
& = & \omega (X_\tH, \p_s u ) + \p_\lambda \tH \cdot \p_s \tlambda \\
& = & \omega (-\tJ\p_s u +\p_\theta u,\p_s u) + \p_\lambda \tH \cdot
\p_s \tlambda \\
& = & |\p_s u |^2 + \p_\lambda \tH \cdot \p_s \tlambda.
\end{eqnarray*}
For the third equality we used the Floer equation \eqref{eq:tFloer1}
for $u$. We also have 
 \begin{eqnarray*}
d_\theta(\tH\circ \tU) & = & d\tH \cdot \p_\theta \tU \\
& = & \mu \, d\tH \cdot \p_s \tU \\
& = & \frac 1 \mu |\p_\theta u |^2 +\p_\lambda \tH \cdot \p_\theta \tlambda.
\end{eqnarray*}
We now compute crossed second derivatives. 
\begin{eqnarray*}
d_\theta d_s(\tH\circ \tU) & = & 2\langle \nabla_\theta \p_s u,\p_s u\rangle + 
\omega(\p_s u,(\p_\lambda \tJ\cdot \p_\theta \tlambda) \cdot \p_s u) \\
& & + \ \p_\theta ( \p_\lambda \tH \cdot \p_s \tlambda) \\ 
d_s d_\theta (\tH\circ \tU) & = & -\frac {\mu'} {\mu^2} |\p_\theta u|^2 + 
\frac 2 \mu \langle \nabla_s\p_\theta u,\p_\theta u\rangle + \frac 1
\mu \omega (\p_\theta u,(\p_\lambda \tJ\cdot \p_s \tlambda)\cdot
\p_\theta u) \\ 
& & + \ \p_s(\p_\lambda \tH \cdot \p_\theta \tlambda). 
\end{eqnarray*}
The equality $d_\theta d_s(\tH\circ \tU)=d_s d_\theta (\tH\circ \tU)$ implies
$(\mu'/\mu^2)|\p_\theta u|^2=0$. Indeed, we have  
$$
2\langle \nabla_\theta \p_s u,\p_s u\rangle = 2\langle \nabla_s
\p_\theta u,\p_s u\rangle =  
\frac 2 \mu \langle \nabla_s\p_\theta u,\p_\theta u\rangle
$$
because $\nabla_\theta \p_s u=\nabla_s \p_\theta u$, we have 
\begin{eqnarray*}
 \omega(\p_s u,(\p_\lambda \tJ\cdot\p_\theta\tlambda)\cdot \p_s u)   
 = \frac 1 \mu \omega (\p_\theta u,(\p_\lambda \tJ\cdot \p_s
 \tlambda)\cdot \p_\theta u)
\end{eqnarray*}
because $\p_\theta u = \mu \p_s u$, $\p_\theta \tlambda = \mu \p_s \tlambda$, and we similarly have
\begin{eqnarray*}
\p_\theta ( \p_\lambda \tH \cdot \p_s \tlambda) 
& = &
\nabla_{\p_\theta \tlambda} (\p_\lambda \tH) \cdot \p_s \tlambda
+ (\p_\lambda \tH) \cdot \nabla_\theta \p_s \tlambda
+ \p_x \p_\lambda \tH \cdot (\p_\theta u, \p_s \tlambda) \\
& = &
\nabla_{\p_s \tlambda} (\p_\lambda \tH) \cdot \p_\theta \tlambda
+ (\p_\lambda \tH) \cdot \nabla_s \p_\theta \tlambda
+ \p_x \p_\lambda \tH \cdot (\p_s u, \p_\theta \tlambda) \\
& = & \p_s(\p_\lambda \tH \cdot \p_\theta \tlambda).
\end{eqnarray*}
Thus 
\begin{equation} \label{eq:muprime}
({\mu'} /{\mu^2}) |\p_\theta u|^2 = \mu' |\p_s u|^2=0.
\end{equation}

We now prove (i). In this case we have
$\p_s u\neq 0$ or $\p_\theta u\neq 0$ on
$\Omega$. Then~\eqref{eq:muprime} implies $\mu'=0$, so that 
$\mu$ is constant on $\Omega$.  We now claim that  
\begin{equation} \label{eq:intermediate}
\tU(s-\mu\tau,\theta+\tau)=\tU(s,\theta) 
\end{equation}
for $\tau$ sufficiently close to $0$ and $(s,\theta)$ in some nonempty
open subset of $\Omega$. Indeed, this clearly holds for $\tau=0$ and
the derivative of the left hand side with respect to $\tau$ is given
by  
$$
-\mu\p_s \tU + \p_\theta \tU = 0. 
$$
Both sides in~\eqref{eq:intermediate} define solutions
of~(\ref{eq:tFloer1}--\ref{eq:tasymptotic}), 
and they must coincide by
the unique continuation property (Proposition~\ref{prop:tunique}). In
particular, their asymptotes must also coincide. This leads to a
contradiction, since the asymptote (say at $-\infty$) of the left term
in~\eqref{eq:intermediate} is $(-\tau)\cdot\op$, which is
different from the asymptote $\op$ for small $\tau\neq 0$,
due to the fact that $S^1$ acts freely on $S^{2N+1}$. 

We now prove~(ii). In this case we have $\p_s u
=\p_\theta u = 0$ on $\R\times S^1$, so that $u(s,\theta)\equiv
x$. We have 
\begin{eqnarray*}
0 \ = \ \frac {d} {d\theta} \int _{S^1} \tH(x,\tlambda(s,\theta+\tau))d\tau
& = & \int _{S^1}
\vec\nabla_\lambda\tH(x, \tlambda(s,\theta+\tau))\cdot
X_{\tlambda(s,\theta+\tau)}d\tau \\
& = & \int _{S^1}
\tau_*\vec\nabla_\lambda\tH(x, \tlambda(s,\theta+\tau))d\tau  \cdot
X_{\tlambda(s,\theta)}.
\end{eqnarray*}
For the last equality, we used that
$X_{\tlambda(s,\theta)}=\tau_*X_{\tlambda(s,\theta+\tau)}$. 
Assuming by contradiction that $-X_{\tlambda(s,\theta)}=\mu(s) \p_s \tlambda
(s,\theta)$ at some point $(s,\theta)\in \R\times S^1$, we obtain
$0=\p_s\tlambda(s,\theta) \cdot 
X_{\tlambda(s,\theta)}=-(1/\mu(s))\|X_{\tlambda(s,\theta)}\|^2$, which
is impossible.   
 \end{proof}

\begin{definition}Let $H\in \cH^{S^1}_N$. 
 Given maps $u:\R\times S^1\to \widehat W$ and $\lambda:\R\to
 S^{2N+1}$, denote
 $\tU:=(u,\tlambda)$ as in Definition~\ref{defi:tilde}, and assume
 that $\tU$ satisfies the asymptotic conditions~\eqref{eq:tasymptotic}
 for $(\og,\olambda),(\ug,\ulambda)\in\cP(H)$. A point
 $(s_0,\theta_0)\in \R\times S^1$ is called {\bf injective} if  
$$
\tU^{-1}(\tU(s_0,\theta_0))=\{(s_0,\theta_0)\}, \qquad 
d\tU(s_0,\theta_0) \ \mbox{is injective}
$$
and 
\begin{equation} \label{eq:tinjective3}
\tU(s_0,\theta_0) \neq (\og(\theta),(-\theta)\cdot\olambda), \quad
\tU(s_0,\theta_0) \neq (\ug(\theta),(-\theta)\cdot\ulambda), \quad
\forall \, \theta\in S^1.
\end{equation} 
 We denote the set of injective points by $R(\tU)$. 
\end{definition}

\begin{proposition} \label{prop:tinjective} 
Let $H\in\cH^{S^1}_{N,\mathrm{reg}}\cap\cH'$ and $(J,g)\in\cJ'(H)$. 
Let $\op,\up\in\cP(H)$ and 
$\tU=(u,\tlambda):\R\times S^1\to \widehat W\times S^{2N+1}$ be a
solution of~(\ref{eq:tFloer1}--\ref{eq:tasymptotic}) satisfying  
$\p_s\tU\not\equiv(0,0)$. 
For every $R>0$, there exists a nonempty open set $\Omega  \subset \,
[R,\infty[\, \times \, S^1$ consisting of injective points.
\end{proposition}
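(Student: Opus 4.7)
The plan is to mirror the proof of Proposition~\ref{prop:regularpts}(iii), with Lemma~\ref{lem:tunique} playing the role of Lemma~\ref{lem:unique} and Proposition~\ref{prop:tunique} replacing the standard unique continuation property. Fix $R>0$ and argue by contradiction, assuming no nonempty open subset of $[R,\infty[\,\times S^1$ consists entirely of injective points.

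First I establish that three ``good'' conditions each hold on an open dense set $\Omega_0\subset \R\times S^1$: (i) $du$ is injective (only relevant when $du\not\equiv 0$); (ii) $d\tU$ is injective; (iii) $\tU(s,\theta)\neq (\og(\theta_1),(-\theta_1)\cdot\olambda)$ and $\neq (\ug(\theta_1),(-\theta_1)\cdot\ulambda)$ for every $\theta_1\in S^1$. Conditions (i) and (ii) follow from Lemma~\ref{lem:indep} combined with an analogue of Proposition~\ref{prop:regularpts}(i) applied to $\tU$ (where Proposition~\ref{prop:tunique} provides unique continuation). For (iii), the bad set is closed; if it contained an open set then Proposition~\ref{prop:tunique} together with an open-closed argument along $\R\times S^1$ would force $\tU$ to coincide with a shifted copy of an asymptote, contradicting $\p_s\tU\not\equiv(0,0)$. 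Before moving on, I dispose of the special case $du\equiv 0$: then $u\equiv x$ for some point $x\in\widehat W$, $\tlambda(s,\theta)=(-\theta)\cdot\lambda(s)$, Lemma~\ref{lem:indep}(ii) yields $d\tlambda$ injective everywhere, and combining the embedding of $s\mapsto \lambda(s)$ for $s\gg 0$ (obtained from Proposition~\ref{prop:regularpts}(i) applied to $\lambda$) with freeness of the $S^1$-action on $S^{2N+1}$ provides an open set of injective points in $[R,\infty[\,\times S^1$ directly.

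Assume from now on $du\not\equiv 0$. Under the contradictory hypothesis, every point of $\Omega_0\cap ([R,\infty[\,\times S^1)$ admits a second preimage $(s',\theta')$ under $\tU$. Taking $R$ large, $\tU(s,\theta)$ is close to but, by (iii), distinct from the $+\infty$ asymptote orbit, so $|s'|\le T$ for some uniform $T$. I now follow verbatim the Baire-category argument in the proof of Proposition~\ref{prop:regularpts}(iii): partition a closed neighbourhood $\overline V_\delta$ into finitely many closed sets $\Sigma_j$ corresponding to nearby second preimages $(s_j,\theta_j)$, apply Sard's theorem to $\mathrm{pr}_1\circ \tU(\cdot,\theta_0)$ to ensure these preimages are regular points, and use Baire's theorem to produce one $\Sigma_j$ with nonempty interior. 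By an additional generic choice (the complement of $\Omega_0$ is meager), I arrange that both the base point $(\os,\otheta)$ and its twin $(\os_1,\otheta_1)\neq(\os,\otheta)$ lie in $\Omega_0$, and that the ``matching'' property required by Lemma~\ref{lem:tunique} holds: for every $h'>0$ there is $h>0$ such that each $(s,\theta)\in V_h(\os,\otheta)$ admits a partner $(s',\theta')\in V_{h'}(\os_1,\otheta_1)$ with $\tU(s,\theta)=\tU(s',\theta')$.

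Choosing $R$ large enough that $\tU(\os,\otheta)$ lies in the neighbourhood $\cU$ of the $+\infty$ asymptote provided by Lemma~\ref{lem:tunique}, I apply that lemma to $\tU_0:=\tU$ and $\tU_1:=\tU(\cdot+(\os_1-\os),\cdot+(\otheta_1-\otheta))$ at the base point $(\os,\otheta)$: the three hypotheses $\tU_0(\os,\otheta)=\tU_1(\os,\otheta)$, $du_0(\os,\otheta)$ injective, $d\tU_1(\os,\otheta)=d\tU(\os_1,\otheta_1)$ injective, are all in force. The conclusion is $\tU_0=\tU_1$ on all of $\R\times S^1$, i.e.\ $\tU$ is invariant under the nonzero translation $(\alpha,\beta):=(\os_1-\os,\otheta_1-\otheta)$. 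Sending $s\to+\infty$ gives $(\ug(\theta+\beta),(-\theta-\beta)\cdot\ulambda)=(\ug(\theta),(-\theta)\cdot\ulambda)$; freeness of the $S^1$-action on $S^{2N+1}$ forces $\beta=0$, after which translation invariance $\tU(s+\alpha,\theta)=\tU(s,\theta)$ combined with the fact that the asymptotes at $\pm\infty$ are distinct $S^1$-orbits in $\cP(H)$ forces $\alpha=0$, a contradiction. The main obstacle is the Baire extraction step, where one must simultaneously secure that both the base point and its twin lie in the dense open set $\Omega_0$ so that Lemma~\ref{lem:tunique} is applicable; this is exactly the place where the hypothesis $(J,g)\in\cJ'(H)$ (adaptedness) enters, via the non-involutivity condition used to prove Lemma~\ref{lem:tunique}.
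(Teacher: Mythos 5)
There is a genuine gap. Your argument reduces everything to the claim that the three ``good'' conditions, in particular \emph{(i) $du$ is injective}, hold on an open dense subset of $\R\times S^1$ whenever $du\not\equiv 0$. That claim is false, and Lemma~\ref{lem:indep} does not give it: Lemma~\ref{lem:indep}(i) only asserts that $\p_s\tU$ and $\p_\theta\tU$ are independent on a dense subset of $\{du\neq 0\}$, which is weaker than injectivity of $du$ and, more importantly, says nothing about density in $[R,\infty[\,\times S^1$. A concrete scenario where your argument collapses: $\p_s u\equiv 0$ on $[R_0,\infty[\,\times S^1$ with $\ug$ nonconstant and $\dot\lambda\not\equiv 0$ (so $\p_s\tU\not\equiv(0,0)$ and $du\not\equiv 0$, but $du=(0,\p_\theta u)$ is nowhere injective on that half-cylinder); similarly, $u$ may eventually be a nonconstant gradient trajectory of $K$ with $\p_\theta u\equiv 0$. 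In both cases your dichotomy (``$du\equiv 0$ globally'' versus ``$du$ injective on a dense set'') misses the actual behaviour, and since Lemma~\ref{lem:tunique} requires $du_0(s_0,\theta_0)$ injective at the base point, your Baire/Sard extraction cannot even be started in $[R,\infty[\,\times S^1$ for large $R$.

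This is exactly the regime the paper's proof spends most of its effort on (its Steps~3 and~4): assuming $du$ is nowhere injective on $[R_0,\infty[\,\times S^1$, one shows via strong admissibility of $H$ (nonvanishing of $X_H$ along nonconstant orbits, respectively the split form $K+f$ near constant orbits) and a holomorphicity argument for the coefficients $(a,b)$ that either $\p_s u\equiv 0$, or $du\equiv 0$, or $u$ is a gradient trajectory of $K$ near $+\infty$; injectivity of $d\tU$ is then recovered from $\dot\lambda\neq 0$ and the orthogonality of $X$ and $\dot\lambda$ rather than from $du$, and density of injective points is proved by a separate uniqueness argument using~\eqref{eq:tdu} and~\eqref{eq:tltheta}. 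Your proposal's Steps (the openness of $R(\tU)$ and the Baire/Sard argument culminating in Lemma~\ref{lem:tunique}) correctly reproduce the paper's Steps~1 and~2, and your endgame (ruling out the translation $(\alpha,\beta)$ via freeness of the $S^1$-action) is right, but without the analysis of the degenerate-$du$ regime the proof of the proposition as stated --- ``for \emph{every} $R>0$'' --- is incomplete.
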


\begin{proof} 
\medskip
\noindent {\it Step~1. We prove that $R(\tU)$ is open.} 
\medskip 

The second and
  third conditions in the definition of an injective point are clearly
  open, and we must prove that the first one is open as well. Arguing by
  contradiction, there exists $(s_0,\theta_0)\in R(\tU)$, a sequence
  $(s^\nu,\theta^\nu)\to (s_0,\theta_0)$, and a sequence
  $(s^{\prime\nu},\theta^{\prime\nu}) \neq (s^\nu,\theta^\nu)$ such
  that
  $\tU(s^{\prime\nu},\theta^{\prime\nu})=\tU(s^\nu,\theta^\nu)$. Since
  $d\tU(s_0,\theta_0)$ is injective, the sequence
  $(s^{\prime\nu},\theta^{\prime\nu})$ is bounded away from
  $(s_0,\theta_0)$. On the other hand, since $\tU(s_0,\theta_0)$ does
  not belong to any of the asymptotes, it follows that the sequence
  $s^{\prime\nu}$ is bounded. We can therefore extract from
  $(s^{\prime\nu},\theta^{\prime\nu})$ a subsequence converging to
  $(s'_0,\theta'_0)\neq (s_0,\theta_0)$. On the other hand, we must
  have $\tU(s'_0,\theta'_0)=\tU(s_0,\theta_0)$, which contradicts the
  assumption that $(s_0,\theta_0)\in R(\tU)$. 

\medskip
\noindent {\it Step~2. The set $R(\tU)$ is
  dense in 
$$
\tU^{-1}(\cU)\, \cap \, \{(s,\theta)\in \R \times \, S^1 \, : \, du(s,\theta) \
\mbox{injective}\},
$$
where $\cU$ is chosen as in Lemma~\ref{lem:tunique}.
}
\medskip  

  Arguing by contradiction, we find a nonempty open set $\Omega\subset\R \times
  \, S^1$ consisting of non-injective points, and such that
  $du(s,\theta)$ is injective for all $(s,\theta)\in\Omega$. By
  Lemma~\ref{lem:indep}(i) we can assume without 
  loss of generality that $d\tU$ is injective on $\Omega$. This
  implies that the set of points $(s,\theta)$ such that
  condition~\eqref{eq:tinjective3} is satisfied is open and dense in
  $\Omega$, so that we can assume without loss of generality that it
  is satisfied on $\overline \Omega$. Thus, the fact that points in
  $\Omega$ are non-injective is equivalent to 
$$
\forall \, (s,\theta)\in \Omega, \qquad \exists\,
(s',\theta')\neq(s,\theta), \qquad \tU(s',\theta')=\tU(s,\theta).
$$
By further shrinking $\Omega$, we can assume that $\tU|_{\Omega}$ is an
embedding. Following~\cite[Proof of Lemma~7.8]{FHS} we denote 
$$
\Omega':=\{  (s',\theta')\in \, \R\times S^1 \, \setminus \, \Omega\, : \,
\tU(s',\theta')\in \tU(\Omega) \}. 
$$
Since condition~\eqref{eq:tinjective3} is satisfied on $\Omega$, we
infer the existence of some $T>0$ such that $\Omega'\subset [-T,T]\times S^1$.  
We claim now that $\Omega'$ must contain a nonempty open set. To prove
this, consider the map $\Phi:\Omega'\to \Omega$ defined by the
commutative diagram  
\begin{equation} \label{diag:Phi}
\xymatrix{\Omega' \ar[rr]^-\Phi \ar[dr]_-{\tU} & & \Omega \\
& \tU(\Omega) \ar[ur]_-{\tU^{-1}} & 
}
\end{equation} 
This extends to a smooth map on an open neighbourhood of $\Omega'$ (compose
$\tU$ in the target with a projection onto the submanifold
$\tU(\Omega)$, then apply $\tU^{-1}$). If a point
$(s,\theta)\in \Omega$ is a regular value of $\Phi$, then
$d\tU(s',\theta')$ is injective for all $(s',\theta')\in \Omega'$ such that
$\tU(s',\theta')=\tU(s,\theta)$. This implies that a
regular value of $\Phi$ has only a finite number of preimages in
$\Omega'$ (otherwise we could find an accumulation point of
preimages, which would be a preimage at which the condition of
injectivity of $d\tU$ would be violated). By Sard's theorem, we can
choose such a regular value $(s_0,\theta_0)$.
Let $(s_1,\theta_1),\dots,(s_N,\theta_N)$ be the other preimages of
$\tU(s_0,\theta_0)$. As in the proof of
Proposition~\ref{prop:regularpts}, one sees that for any $r>0$ there
exists $\delta>0$ such that  
$$
\forall \, (s,\theta)\in V_{2\delta}(s_0,\theta_0), \quad \exists\,
(s',\theta')\in \bigcup_{j=1}^N V_r(s_j,\theta_j), \quad  
\tU(s,\theta)=\tU(s',\theta').
$$
(if this was not true, one would produce by a compactness argument in
$[-T,T] \times S^1$ a preimage of
$\tU(s_0,\theta_0)$ distinct from $(s_j,\theta_j)$,
$j=0,\dots,N$). Let us define  
$$
\Sigma_j:=\{(s,\theta)\in \overline V_\delta (s_0,\theta_0) \, : \,
\exists \, (s',\theta')\in \overline V_r(s_j,\theta_j),\
\tU(s',\theta')=\tU(s,\theta) \}. 
$$
Then $\Sigma_j$ is closed and $\overline V_\delta
(s_0,\theta_0)=\Sigma_1\cup \dots \cup \Sigma_N$. It follows from
Baire's theorem that some $\Sigma_j$ -- say $\Sigma_1$ -- has nonempty
interior. Then $\tU(\mathrm{int}(\Sigma_1))$ is nonempty 
and open in $\tU(\Omega)$, so that
$\Sigma'_1:=
(\tU|_{V_r(s_1,\theta_1)})^{-1}(\tU(\mathrm{int}(\Sigma_1)))$ 
is nonempty and open in $\Omega'$, which proves our claim. 

Let $(\os,\otheta)\in \mathrm{int}(\Sigma_1)$ and denote
$(\os_1,\otheta_1)\in\Sigma'_1$ the unique preimage of
$\tU(\os,\otheta)$. Let $0<r_1<r$ be such that
$V_{r_1}(\os_1,\otheta_1)\subset \Sigma'_1$, and $0<\delta_1<\delta$
be such that $V_{\delta_1}(\os,\otheta)\subset \Sigma_1$ and
$\tU(V_{\delta_1}(\os,\otheta)) \subset
\tU(V_{r_1}(\os_1,\otheta_1))$. It follows from our construction that,
for all $0<h'\le r_1$, there exists $0<h\le \delta_1$, such that 
$\tU(V_h(\os,\otheta)) \subset \tU(V_{h'}(\os_1,\otheta_1))$. We can
therefore apply Lemma~\ref{lem:tunique} with
$(s_0,\theta_0):=(\os,\otheta)$, $\tU_0:=\tU$, $\tU_1:=\tU(\cdot
+\os_1-\os,\cdot+\otheta_1-\otheta)$, and $h_0=r_1$. Since $\tU(s_0,\theta_0)\in\cU$, we obtain $\tU_0=\tU_1$. 

We can now get the desired contradiction as follows. We first note
that, by construction, we have $(\os_1,\otheta_1)\neq
(\os,\otheta)$. Assume first that $\otheta_1\neq \otheta$. Since
$\lim_{s\to-\infty}\tU(s,\theta)=(\og(\theta),(-\theta)\cdot
\olambda)$, we obtain from $\tU_0=\tU_1$ that
$\olambda=(\otheta-\otheta_1)\cdot\olambda$, a contradiction. 
Thus $\otheta_1=\otheta$, so that $\os_1\neq \os$. Then 
$$
\tU(s,\theta)=\lim_{k\to\pm\infty}\tU(s+k(\os_1-\os),\theta)=
(\og(\theta),(-\theta)\cdot\olambda),   
$$
so that $\p_s\tU\equiv (0,0)$, a contradiction again. The proof of
Step~2 is complete. 

\medskip
\noindent {\it Step~3. Assume there exists $R_0>0$ such that
  $du(s,\theta)$ is non-injective for all $s\ge R_0$ and $\theta\in
  S^1$. Assume that $\lim_{s\to\infty}
  (u(s,\theta),\lambda(s))=(\ug(\theta),\ulambda)$ and $\ug$ is
  non-constant. Then $\p_su\equiv 0$ on $[R_0,\infty[\,\times \, S^1$ and 
  $R(\tU)$ is dense in $[R_0,\infty[\,
  \times \, S^1$. }
\medskip 

Let us choose $R\ge R_0$ large enough so that
$X^\theta_{H_{\lambda(s)}}(u(s,\theta))\neq 0$ for all $s\ge R$ and
$\theta\in S^1$. This is possible since, by assumption, the
Hamiltonian $H$ is strongly admissible (see Definition~\ref{defi:strongH}).
As a consequence of the Floer equation for $u$, the vectors $\p_s u$
and $\p_\theta u$ cannot vanish simultaneously for $s\ge R$. 

We first show that there exist $\alpha,\beta\in\R$ such that 
\begin{equation} \label{eq:relation} 
\alpha \p_s u(s,\theta) + \beta \p_\theta u(s,\theta) =0
\end{equation} 
for all $(s,\theta)\in \, ]R,\infty[\, \times \, S^1$. By our
assumption on $du$, this relation holds for some choice of smooth
functions $\alpha,\beta:]R,\infty[\, \times \, S^1\to \R$ such that
$\alpha^2+\beta^2=1$.  Let us use
the shorthand notation $\tJ:=\tJ_\tlambda$, $X_\tH:=X_{\tH_\tlambda}$,
so that the Floer equation for $u$ writes $\p_su +\tJ\p_\theta u= \tJ
X_\tH$. We obtain 
$$
\p_s u = \beta^2 \tJ X_\tH -\alpha \beta X_\tH
$$
and
$$
\p_\theta u = -\alpha\beta \tJ X_\tH +\alpha^2 X_\tH. 
$$
Let us denote $a:=\beta^2$, $b:=-\alpha\beta$, so that $\p_s u=a\tJ
X_\tH+ bX_\tH$, $\p_\theta u = b\tJ X_\tH + (1-a) X_\tH$, and
$a^2+b^2-a=0$. From $[\p_s u, \p_\theta u]=0$ we obtain (see
also~\eqref{eq:tbracket}) 
$$
0=(\p_\theta a - \p_s b) \tJ X_\tH + (\p_s a + \p_\theta b) X_\tH.
$$
By our choice of $R>0$ we have $X_\tH\neq 0$, hence the linear
combination above must be trivial. The map $(b,a):]R,\infty[\,\times\,
S^1 \to \C$ is therefore holomorphic. On the other hand, its image
lies on the circle $a^2+b^2-a=0$, and this map must be constant. It
then follows that $\alpha$ and $\beta$ are constant as well. 

By assumption, the asymptote $\ug$ is nonconstant. This implies that
$\beta=0$, as seen by passing to the limit $s\to\infty$
in~\eqref{eq:relation}. Thus $\p_s u\equiv 0$ on $[R,\infty[\,\times\,
S^1$. 

We now prove that $\p_su\equiv 0$ on $[R_0,\infty[\,\times\, S^1$. Let
$\cI:=\{R\in [R_0,\infty[ \, : \, \p_su\equiv 0 \mbox{ on }
[R,\infty[\,\times \, S^1\}$. We just showed $\cI\neq \emptyset$, and
clearly $\cI$ is closed. On the other hand, the previous proof also
shows that $\cI$ is open (if $R\in \cI$, then $u([R,\infty[\,\times
S^1)=\ug(S^1)$, hence $u([R-\eps,\infty[\,\times S^1)$ is close to
$\gamma(S^1)$ for $\eps>0$ small enough, so that $X_H$ is nonzero
along the image of $u$ and one can apply the previous argument). 
Thus $\cI=[R_0,\infty[$.

We claim that $d\tU(s,\theta)$ is injective for all
$(s,\theta)\in[R_0,\infty[\,\times \, S^1$. Indeed, the component
$\lambda$ of $(u,\lambda)$ solves a time-independent ODE on
$[R_0,\infty[$ and, since $\p_s\tU\not\equiv 0$, we infer that
$\dot\lambda(s)\neq 0$ for all $s\ge R_0$. Since $\p_\theta u\neq 0$ on
$[R_0,\infty[\,\times\, S^1$, the claim follows. 

We finally claim that the set of injective points $R(\tU)$ is open and dense
in $[R_0,\infty[\,\times\, S^1$. Arguing by contradiction, we find a
non-empty open set $\Omega\subset [R_0,\infty]\,\times\, S^1$
consisting of non-injective points. Since $\p_\theta u\neq 0$ on
$\Omega$ and $d\tU$ is injective, it follows that the second and
third conditions in the definition of an injective point are
satisfied. Thus, the fact that points in $\Omega$ are non-injective
is equivalent to  
$$
\forall \, (s,\theta)\in \Omega, \qquad \exists\,
(s',\theta')\neq(s,\theta), \qquad \tU(s',\theta')=\tU(s,\theta).
$$
Arguing verbatim as in Step~2, we find (after possibly shrinking
$\Omega$) an open set $\Omega'\subset ]-\infty,R_0[\,\times \, S^1$, 
disjoint from $\Omega$, and a diffeomorphism
$\Phi:=(\phi,\psi):\Omega'\to \Omega$ such that $\tU|_{\Omega'}
= \tU|_{\Omega}\circ \Phi$ (see
diagram~\eqref{diag:Phi}). Substituting the relation
$u(s,\theta)=u(\phi(s,\theta),\psi(s,\theta))$ for all
$(s,\theta)\in\Omega'$ in the Floer equation~\eqref{eq:tFloer1} for
$u$, we obtain as in~\eqref{eq:tdu} 
\begin{eqnarray*}
0 & = & (\p_s\phi-\p_\theta\psi)\p_su(\Phi) + (\p_s\psi +
 \p_\theta\phi)\p_\theta u(\Phi) - \p_\theta\phi \,
 X_{\tH_{\tlambda(\Phi)}}(u(\Phi))  \\ 
 & &  -
 (1-\p_\theta\psi)\,  \tJ_{\tlambda(\Phi)}(u(\Phi))
 X_{\tH_{\tlambda(\Phi)}}(u(\Phi)). 
\end{eqnarray*}
Using that $\p_su=0$ and $\p_\theta u=X_{\tH_\tlambda}\neq 0$ on $\Omega$, we obtain 
$$
\p_s\psi=0,\qquad \p_\theta\psi=1. 
$$
 The same substitution in~\eqref{eq:tFloer3} for $\tlambda$ yields 
 \begin{eqnarray} 
 0 & = & \p_\theta\tlambda + X_{\tlambda} \nonumber \\
 & = & \p_s\tlambda(\Phi)\p_\theta\phi +
 \p_\theta\tlambda(\Phi)\p_\theta\psi + X_{\tlambda(\Phi)} \nonumber \\
 & = & \p_s\tlambda(\Phi)\p_\theta\phi + (\p_\theta\psi-1)
 (-X_{\tlambda(\Phi)}). \label{eq:tltheta}
 \end{eqnarray}  
 The third equality uses equation~\eqref{eq:tFloer3} for
 $\tlambda$. Since $\p_\theta \psi=1$ and $\p_s\tlambda\neq 0$ on
 $\Omega$, we obtain 
$$
\p_\theta\phi=0.
$$

Thus $\phi(s,\theta)=\phi(s)$ and $\psi(s,\theta)=\theta + \theta_0$,
$\theta_0\in S^1$ are actually defined on some open strip $I'\times S^1$
which intersects $\Omega'$. Let us denote $I:=\phi(I')$, so that we
have a diffeomorphism
$$
\overline \Phi =(\phi,\psi):I'\times S^1\to I\times S^1.
$$
We first observe that 
$$
\tlambda(\overline \Phi(s,\theta))=\tlambda(s,\theta),\qquad \forall
\, (s,\theta)\in I'\times S^1. 
$$
This follows from the fact that both $\tlambda\circ\overline \Phi$ and
$\tlambda$ solve the same ODE~\eqref{eq:tFloer3}, due to the special
form of $\overline \Phi$. We now claim that $u\circ\overline \Phi$ and
$u$ coincide on $I'\times S^1$. This follows from the unique
continuation property for the standard Floer equation, since
$u\circ\overline \Phi(s,\theta)=\ug(\theta+\theta_0)$ and 
therefore  
$$
\p_s(u\circ \Phi) + \tJ_\tlambda(u\circ\Phi)(\p_\theta
(u\circ\overline \Phi) - X_{\tH_\tlambda})=0
$$
on $I'\times S^1$. We thus obtained 
$$
\tU\circ \overline \Phi=\tU
$$
on $I'\times S^1$. Let now $s'_0\in I'$ and denote
$s_0:=\phi(s'_0)$. The maps $\tU$ and
$\tU(\cdot+s_0-s'_0,\cdot+\theta_0)$ coincide along $\{s'_0\}\times
S^1$ and solve~(\ref{eq:tFloer1}--\ref{eq:tFloer3}), hence by unique
continuation (Proposition~\ref{prop:tunique}) they coincide on
$\R\times S^1$. Arguing as in the last paragraph of Step~2, we obtain
a contradiction with our standing assumption $\p_s\tU\not \equiv
(0,0)$. This proves Step~3. 

\medskip 
\noindent {\it Step~4. Assume there exists $R_0>0$ such that
  $du(s,\theta)$ is non-injective for all $s\ge R_0$ and $\theta\in
  S^1$. Assume that $\lim_{s\to\infty}
  (u(s,\theta),\lambda(s))=(x,\ulambda)$ for some $x\in\widehat
  W$ (recall that, in this case, we have $h=K+f$ near $(x,\ulambda)$). Then: 
 \begin{itemize}
\item either $du\equiv 0$ on $[R_0,\infty[\,\times S^1$ and $R(\tU)$
  is dense in $[R_0,\infty[\, \times \, S^1$,  
\item or there exists $R\ge R_0$ such that $\p_\theta u\equiv 0$ and
  $u$ is a nonconstant gradient trajectory of $K$ on
  $[R,\infty[\,\times \, S^1$. In this case, $R(\tU)$ is dense in
  $[R,\infty[\, \times \, S^1$.   
 \end{itemize}
}
\medskip

By condition~\eqref{item:strongH2} in Definition~\ref{defi:strongH},
there exists $R\ge R_0$ such that, for $s\ge R$, the components $u$
and $\lambda$ solve the decoupled equations
\begin{eqnarray*}
\p_s u + J(u)(\p_\theta u - X_K(u)) & = & 0, \\
\dot\lambda - \nabla f(\lambda) & = & 0.
\end{eqnarray*}
The first equation implies that $C_R(u):=\{(s,\theta)\in ]R,\infty[\,\times\,
S^1 \, : \, \p_su(s,\theta)=0\}$ either coincides with
$]R,\infty[\,\times\, S^1$ or is discrete~\cite[Lemma~4.1]{FHS}. 
In the first case, we obtain $du\equiv 0$ on $]R,\infty[\,\times\,
S^1$. In the second case, the complement of $C_R(u)$ is connected. As in Step~3,
one then shows that there exist $\alpha,\beta\in\R$ such that
$\alpha^2+\beta^2=1$ and   
\begin{equation*} 
\alpha \p_s u(s,\theta) + \beta \p_\theta u(s,\theta) =0
\end{equation*} 
for all $(s,\theta)\in \, ]R,\infty[\, \times \, S^1$. 

If $\alpha\neq 0$, let us assume without loss of generality that
$\alpha>0$. Then $u(s,\theta)=u(s+\alpha t,\theta +\beta t)$
for all $t\ge 0$ and $(s,\theta)\in]R,\infty[\,\times\, S^1$. Letting
$t\to \infty$ we see that $u(s,\theta)=x$ and we again obtain
$du\equiv 0$.  
If $\alpha=0$, then $\p_\theta u\equiv 0$ and $u$ is a gradient
trajectory of $K$. 

We now prove the following: if $du\equiv 0$ on $[R,\infty[\,\times \,
S^1$ for some $R\ge R_0$, then the same holds on
$[R_0,\infty[\,\times\, S^1$. Arguing as in Step~3, we consider the
set $\cI:=\{R\in[R_0,\infty[\, : \, du\equiv 0 \mbox{ on }
[R,\infty[\, \times \, S^1\}$. Then $\cI\neq\emptyset$ and $\cI$ is
closed. On the other hand, $\cI$ is open: if $R\in \cI$, then
$u([R,\infty[\,\times\, S^1)=x$ and therefore
$u([R-\eps,\infty[\,\times\, S^1)$ belongs to a neighbourhood of $x$
where $H$ has the form $K+f$, provided $\eps>0$ is small enough. The
above argument shows that either $du\equiv 0$ on
$[R-\eps,\infty[\,\times\, S^1$, or $u$ is a nonconstant gradient
trajectory of $K$ on the same domain. The latter is impossible since
$du\equiv 0$ on $[R,\infty[\,\times \, S^1$. This shows
$\cI=[R_0,\infty[$. 

Let us refer to the case $du\equiv 0$ as Case~1, and to the case when $u$
is a non-constant gradient trajectory of $K$ as Case~2. We denote
$\tR:=R_0$ in Case~1, respectively $\tR:=R$ in Case~2. We now prove
that  
injective points are dense in $[\tR,\infty[\,\times\,S^1$. 

Let us first assume that $\dot\lambda\neq 0$ on $[\tR,\infty[$. Since
$X$ and $\dot\lambda$ are orthogonal (by $S^1$-invariance of $f$), we
infer that $d\tU$ is injective on $[\tR,\infty[\,\times\,
S^1$. Moreover, since $\lambda(s)\notin (S^1\cdot\ulambda)$ for $s\ge
\tR$, the third condition in the definition of an injective point is
satisfied. Arguing by contradiction as in Step~2 (using Baire's
theorem), we find open sets $\Omega'\subset ]-\infty,\tR[\,\times\,
S^1$ and $\Omega\subset [\tR,\infty[\,\times\, S^1$ and a
diffeomorphism $\Phi=(\phi,\psi):\Omega'\to \Omega$ such that
$\tU(s,\theta)=\tU(\Phi(s,\theta))$ for all
$(s,\theta)\in\Omega'$. Using~\eqref{eq:tltheta} in Step~3 we obtain
$\p_\theta\phi=0$ and $\p_\theta\psi=1$. Thus $\phi=\phi(s)$ and
$\psi=\theta+\overline \psi(s)$, and $\Phi$ admits an extension
$\overline \Phi=(\phi,\psi):I'\times S^1\to I\times S^1$ as in
Step~3. The same arguments as in Step~3 show that $\tU\circ\overline
\Phi=\tU$ on $I'\times S^1$. We then fix $s'_0\in I'$ and denote
$s_0:=\phi(s'_0)$. The maps $\tU$ and $\tU(\cdot+s_0-s'_0,\cdot +
\overline\psi(s'_0))$ coincide along $\{s'_0\}\times S^1$ and
solve~(\ref{eq:tFloer1}--\ref{eq:tFloer3}), hence by unique
continuation (Proposition~\ref{prop:tunique}) they coincide on
$\R\times S^1$. As in the final paragraph of Step~2, this contradicts
$\p_s\tU\not\equiv(0,0)$. 

We now assume that $\dot\lambda\equiv 0$ on $[R,\infty[$. Then $u$ is a
non-constant gradient trajectory of $K$ (Case~2). Under our
assumptions $d\tU$ is injective on $[R,\infty[\,\times\,
S^1$. Moreover, $u$ is not equal to $x$ on this domain and hence the
third condition in the definition of an injective point is
satisfied. Arguing as above and using the same notation, we find that 
$\tU(s,\theta)=\tU(\Phi(s,\theta))$ for all $(s,\theta)\in\Omega'$,
with $\Phi=(\phi,\psi)$. Using~\eqref{eq:tdu} we obtain $\p_s\phi=1$
and $\p_\theta\phi=0$, so that $\phi(s,\theta)=s+\os$ for some
$\os\in\R$. Using~\eqref{eq:tltheta} we obtain $\p_\theta\psi=1$, so
that $\psi(s,\theta)=\theta+\overline \psi(s)$. We conclude exactly as
above. This proves Step~4.
\end{proof}

\begin{proposition} \label{prop:duinjective}
Let $H\in\cH^{S^1}_{N,\mathrm{reg}}\cap\cH'$ and $(J,g)\in\cJ'(H)$. 
Let $(u,\tlambda):\R\times S^1\to \widehat W\times S^{2N+1}$ be a
solution of~(\ref{eq:tFloer1}--\ref{eq:tasymptotic}) satisfying
$\p_su\not\equiv(0,0)$. Assume one of the following holds:  
\begin{itemize}
\item one of the asymptotes of $u$ has a non-constant first component, 
\item both asymptotes have a constant first component and $u$ differs
  from a non-constant gradient trajectory in the neighbourhood of
  $-\infty$ or $+\infty$.  
\end{itemize}
Then there exists a nonempty open set $\Omega\subset \R\times S^1$
consisting of injective points and such that $du$ is injective on
$\Omega$.  
\end{proposition}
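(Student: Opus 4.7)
I would argue by contradiction. Let $A := \{(s,\theta) \in \R \times S^1 : du(s,\theta) \text{ is injective}\}$, an open subset. The assumption that no $\Omega$ exists is equivalent to $A \cap R(\tU) = \emptyset$. First I would show that $A$ is disjoint from $\tU^{-1}(\cU)$ near both asymptotes, where $\cU$ is the neighborhood given by Lemma~\ref{lem:tunique}: for otherwise, at any point of $A \cap \tU^{-1}(\cU)$, Step~2 of the proof of Proposition~\ref{prop:tinjective} would yield density of $R(\tU)$ in $A$ locally, giving a forbidden $\Omega$. Hence $du$ is non-injective on $[R_0, \infty[ \times S^1$ and on $(-\infty, -R_0'] \times S^1$ for some $R_0, R_0' > 0$.

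Next, at the end singled out by the hypothesis, I would invoke the alternatives from Steps~3 and~4 of the proof of Proposition~\ref{prop:tinjective}: under condition~1, WLOG $\ug$ non-constant, Step~3 gives $\p_s u \equiv 0$ on $[R_0, \infty[ \times S^1$ and hence $u(s,\theta) \equiv \ug(\theta)$ there; under condition~2, WLOG $u$ is not a non-constant gradient trajectory of $K$ in a neighborhood of $+\infty$, so the second alternative of Step~4 is excluded, yielding $du \equiv 0$ on $[R_0, \infty[ \times S^1$ and hence $u \equiv x$ there. Thus, in either case, $u$ is $s$-independent on $[R_0, \infty[ \times S^1$, equal to a fixed loop $\tilde\gamma : S^1 \to \widehat W$ (namely $\ug$ or the constant $x$).

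The final step is to propagate this $s$-independence globally, which will yield $\p_s u \equiv 0$ and contradict the hypothesis. The $s$-independent map $u_1(s,\theta) := \tilde\gamma(\theta)$ solves the time-dependent Floer equation $\p_s v + J^\theta_{\lambda(s)}(\p_\theta v - X^\theta_{H_{\lambda(s)}}(v)) = 0$ precisely on the open subset of $\R$ where $\tilde\gamma$ is a periodic orbit of $H_{\lambda(s)}$, a set containing $[R_0, \infty[$; by Floer unique continuation \cite[Proposition~3.1]{FHS} applied to $u$ and $u_1$, we obtain $u \equiv u_1$ on the connected component of this set containing $[R_0, \infty[$. To extend past its left endpoint $s^*$, I would construct an auxiliary solution $\tU_1 = (\tilde\gamma, \tlambda_1)$ of~(\ref{eq:tFloer1}--\ref{eq:tFloer3}) --- using the gradient flow of $F_{\tilde\gamma}$ on $S^{2N+1}$ together with the $S^1$-invariant structure --- and apply Proposition~\ref{prop:tvanish} to the difference $\tU_0 - \tU_1 = (u - \tilde\gamma, \tlambda - \tlambda_1)$, which vanishes on $[R_0, \infty[ \times S^1$ and satisfies an integro-differential system of the form~\eqref{eq:teqCn}; this forces $\tU_0 \equiv \tU_1$ globally and hence $\p_s u \equiv 0$, a contradiction. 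The main technical obstacle is the global construction of the auxiliary $\tU_1$: the backward extension of $\lambda_1$ must remain in the locus where $\tilde\gamma$ is a periodic orbit of $H_{\lambda_1}$, which is automatic in the split class $\cH_{\mathrm{split}}$ (where this locus is all of $S^{2N+1}$) but in general relies on the nondegeneracy of the asymptotic $S^1$-orbit $S_\up$ and on the specific structure of Hamiltonians in $\cH'$.
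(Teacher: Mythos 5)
Your reduction is correct and follows the paper up to the point where you conclude that $u$ is $s$-independent (equal to $\ug$, or to the constant $x$) on a half-strip $[R_0,\infty[\,\times S^1$: ruling out $du$-injective points in $\tU^{-1}(\cU)$ via Step~2 of Proposition~\ref{prop:tinjective}, and then invoking Steps~3 and~4 at the end singled out by the hypothesis, is exactly what the paper does. The gap is in your final propagation step. To apply Proposition~\ref{prop:tvanish} (or Proposition~\ref{prop:tunique}) you need the comparison map $\tU_1=(\tilde\gamma,\tlambda_1)$ to be a genuine solution of (\ref{eq:tFloer1}--\ref{eq:tFloer3}) on all of $\R\times S^1$, which forces $\lambda_1(s)$ to remain for all $s$ in the locus $Z:=\{\lambda\,:\,\tilde\gamma\in\cP(H_\lambda)\}$; nothing in the definitions of $\cH'$ or of $\cH^{S^1}_{N,\reg}$ guarantees that the backward $\vec\nabla F_{\tilde\gamma}$-flow stays in $Z$ (and nondegeneracy of $S_\up$ is of no help here). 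You flag this obstacle yourself, but it is not merely technical: the configuration you are trying to exclude --- $\p_s u\equiv 0$ on $[R_-,\infty[\,\times S^1$ with $R_-:=\inf\{R:\p_su\equiv 0 \mbox{ on } [R,\infty[\,\times S^1\}$ finite --- is the exact analogue of a continuation-type Floer solution that is constant near one end only, and the paper makes no attempt to rule it out. Note also that $\{s:\tilde\gamma\in\cP(H_{\lambda(s)})\}$ is closed, not open; your argument stops at the left endpoint $s^*$ of its relevant component with no contradiction in hand.

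What the paper does at this juncture is local rather than global: it uses the hypothesis $\p_su\not\equiv 0$ only to guarantee $R_->-\infty$ (with $R_-$ defined via $\p_su\equiv 0$ in the non-constant-asymptote case, and via $du\equiv 0$ in the constant-asymptote case), and then re-applies Step~3 (resp.\ Step~4) on $[R_--\eps,\infty[\,\times S^1$ to extract a sequence $(s^\nu,\theta^\nu)$ with $s^\nu\nearrow R_-$ at which $du$ \emph{is} injective --- for otherwise Step~3 (resp.\ Step~4) would extend the half-strip past $R_-$, contradicting its definition (or the hypothesis that $u$ is not a non-constant gradient trajectory there). Step~2 then yields the required open set $\Omega$ of injective points with $du$ injective near $\{R_-\}\times S^1$. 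You should replace your last step by this argument: the point of $\p_su\not\equiv 0$ is to locate the transition locus $R_-$ and harvest injective points just to its left, not to show that the transition cannot occur.
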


\begin{proof}
We distinguish several cases.

Assume there exists a sequence $(s^\nu,\theta^\nu)$ such that
$s^\nu\to\infty$, $\nu\to\infty$ and $du(s^\nu,\theta^\nu)$ is
injective. In this case, the claim follows from Step~2 in the proof of
Proposition~\ref{prop:tinjective}.  

Now assume there exists $R_0$ such that $du$ is non-injective on
$[R_0,\infty[\,\times S^1$, and the asymptote
$\ug:=\lim_{s\to\infty}u(s,\cdot)$ is non-constant. Let
$R_-:=\inf\{R\,:\, \p_su\equiv 0 \mbox{ on } [R,\infty[\,\times \,
S^1\}$. By Step~3 in Proposition~\ref{prop:tinjective}, we have
$R_-\le R_0$. Moreover, the assumption $\p_s u \not\equiv 0$ ensures
that $R_->-\infty$. Applying Step~3 again, we find a sequence
$(s^\nu,\theta^\nu)$ such that $s^\nu\to R_-$, $\nu\to\infty$ (with
$s^\nu<R_-$) and $du(s^\nu,\theta^\nu)$ is injective. Then the claim
follows from Step~2.  

Finally, assume there exists $R_0$ such that $du$ is non-injective on
the domain $[R_0,\infty[\,\times S^1$, and the asymptote
$\ug:=\lim_{s\to\infty}u(s,\cdot)$ is constant. By Step~4 in
Proposition~\ref{prop:tinjective} and our above assumption, we must
have $du\equiv 0$ on $[R_0,\infty[\,\times \, S^1$. Let
$R_-:=\inf\{R\,:\, du\equiv 0 \mbox{ on } [R,\infty[\,\times \,
S^1\}$. Then $R_-\le R_0$ and, because $\p_su\not\equiv 0$, we have
$R_->-\infty$. Applying Step~4 again, we find a sequence
$(s^\nu,\theta^\nu)$ such that $s^\nu\to R_-$, $\nu\to\infty$ (with
$s^\nu<R_-$), and  $du(s^\nu,\theta^\nu)$ is injective (if such a
sequence did not exist, we could find $\eps>0$ such that $du$ is
non-injective on $[R_- -\eps,\infty[\,\times \, S^1$, so that, by
Step~4, we either have $du\equiv 0$ on this domain and get a
contradiction with the definition of $R_-$, or $u$ is a non-constant
gradient trajectory and get a contradiction with our assumption). The
claim then follows from Step~2.  
\end{proof}

\begin{proof}[Proof of Theorem~\ref{thm:transvS1}.] 
We start by defining the neighbourhood of $\cH_*\cJ'\subset \cH\cJ'$
for which we will prove the theorem. Let us fix $(H_0,J_0,g_0)\in
\cH_*\cJ'$ and define which perturbations $(H,J,g)$ of $(H_0,J_0,g_0)$
are allowed.  If $H_0\in\cH_{\mathrm{gen}}$, then we allow any
$H\in\cH_{\mathrm{gen}}$ and any $(J,g)\in\cJ'(H)$. If
$H_0\in\cH_{\mathrm{split}}$, then the $S^1$-invariant metric $g$ on
$S^{2N+1}$ is allowed to be arbitrary. The pair $(H,J)$ is required to
be a perturbation of  
$(H_0,J_0)$ supported away from the constant orbits of $H$, and close
enough to $(H_0,J_0)$ such that the following two conditions hold:   
\begin{itemize}
\item for all $(\gamma_1,\lambda_1),(\gamma_2,\lambda_2)\in\cP(H)$
  such that $\gamma_1=\gamma_2$ and $\lambda_1\neq \lambda_2$, and for
  every solution $\lambda:\R\to S^{2N+1}$ of the equation 
\begin{equation} \label{eq:final}
 \dot \lambda = \int_{S^1}\vec\nabla_\lambda H(\theta,\gamma_1(\theta),\lambda)\, d\theta
\end{equation}
with $\lim_{s\to -\infty} \lambda(s)=\lambda_1$ and $\lim_{s\to
  \infty} \lambda(s)=\lambda_2$, there exists a nonempty open interval
$\cI\subset\R$ such that, for any $s\in \cI$ and $s'\in
\R\setminus\{s\}$, we have $\lambda(s')\notin S^1\cdot \lambda(s)$. 
\item for any $\op=(\og,\olambda),\up=(\ug,\ulambda)\in\cP(H)$ such
  that $\og\equiv\ox$, $\ug\equiv \ux$ are constant, and any
  $(u,\lambda)\in\widehat \cM(\op,\up;H,J,g)$ such that, near $\op$
  and $\up$, the components $u$, $\lambda$ are nonconstant gradient
  trajectories of $\oK$, $\of$, respectively $\uK$, $\uf$,   
we have 
\begin{equation} \label{eq:final2}
\lambda(\R)\cap (S^1\cdot \olambda) =\emptyset \quad \mbox{or} \quad
\lambda(\R)\cap (S^1\cdot \ulambda) =\emptyset. 
\end{equation}
\end{itemize}

The condition involving~\eqref{eq:final} is clearly satisfied for
$(H_0,g)$ with $g$ arbitrary. Hence it will still be satisfied for
small enough perturbations of $H_0$.   

The condition involving~\eqref{eq:final2} is also satisfied for the
pair $(H_0,J_0)$. Let us write $H_0=K_0+f_0$. By the maximum principle
and taking into account that constant orbits of $K_0$ are situated in
$W$, the trajectories involved in condition~\eqref{eq:final2} are
contained in $W$. Since $K_0$ is $C^2$-small on $W$, and $W$ is
symplectically aspherical, these must be gradient trajectories of
$K_0$~\cite{SZ}. Similarly, the $\lambda$-components are gradient
trajectories of $f_0$. Hence~\eqref{eq:final2} is satisfied due to
$S^1$-invariance of $f_0$.  As a consequence, it will still be
satisfied after a small perturbation of the pair $(H_0,J_0)$. 

\medskip 

Once the above neighbourhood of $\cH_*\cJ'$ has been defined, the proof 
is set up as for Theorem~\ref{thm:Jreg},
with obvious modifications dictated by $S^1$-invariance and the fact
that, in the split case, we only allow perturbations supported away
from the constant orbits.  
The main equation is \eqref{eq:ann1}, namely
\begin{eqnarray}
\int_{\R\times S^1} \Big\langle \eta,D_u\zeta + (D_\lambda J\cdot
\ell)J \partial _s u - J(D_\lambda X_{H_\lambda}\cdot \ell) -
JX_{h_\lambda} +
Y^\theta_\lambda J\partial _s u \Big\rangle \, dsd\theta 
\nonumber
\\
+\int_\R \Big\langle k,\nabla_s\ell -\nabla_\ell \int_{S^1} \vec
\nabla_\lambda H -  \int_{S^1} \nabla_\zeta \vec \nabla _\lambda H 
-\int_{S^1} \vec \nabla_\lambda h +
A\cdot \dot \lambda \Big\rangle \, ds =  0. \nonumber \\
\label{eq:tann1}
\end{eqnarray}
We must show that, if \eqref{eq:tann1} is satisfied for all $(\zeta, \ell,
h, Y, A) \in T_{(u,\lambda)} \cB \oplus T_h\cH^{r,S^1}_{N,\mathrm{reg}}
\oplus T_{(J,g)} \cJ^{r,S^1}_N$, then $(\eta, k) \equiv 0$.
Taking $h=0$, $Y=0$, $A=0$ we obtain that $(\eta,k)$ lies in the kernel of
the formal adjoint $D_{(u,\lambda)}^*$. The latter has the same form
as $D_{(u,\lambda)}$ and is therefore elliptic with smooth
coefficients. By elliptic regularity, it follows that $\eta$ and $k$
are smooth and the pair $(\eta, k)$ satisfies the unique continuation property. 
It is therefore enough to show that $(\eta, k)$ vanishes on a nonempty open set.
We distinguish now three cases. 

\medskip 

\noindent {\it Case 1.} $\p_s u \equiv 0$ and $\p_s \lambda \equiv
0$. In this case $(u,\lambda)\equiv (\gamma,\lambda_0)\in \cP(H)$. The
operator $D_{(u,\lambda)}$ is Fredholm of index $1$
(using the notation in the proof of Proposition~\ref{prop:indexMB},
the index is easily seen to differ by~$1$ from the index of the
operator $\widetilde D_{(u,\lambda)}$, which is equal to $0$
since $\widetilde D_{(u,\lambda)}$ is bijective). We must therefore
show that $D_{(u,\lambda)}$ has a $1$-dimensional kernel. Let $V\in
\ker\, D_{(u,\lambda)}$, and denote $V(s):=V(s,\cdot)\in
H^1(S^1,\gamma^* T\widehat W)\oplus T_{\lambda_0} S^{2N+1}$. Let
$V(s)^\perp$ be the $L^2$-orthogonal of $(\dot \gamma,-X)$ and
consider the asymptotic operator $D_{(\gamma,\lambda)}:
H^1(S^1,\gamma^* T\widehat W)\oplus T_{\lambda_0} S^{2N+1}\to
L^2(S^1,\gamma^* T\widehat W)\oplus T_{\lambda_0} S^{2N+1}$. In
suitable coordinates, we can write $D_{(u,\lambda)}=\p_s +
D_{(\gamma,\lambda_0)}$. Since $V\in \ker \, D_{(u,\lambda)}$, we have  
$$
(\p_s-D_{(\gamma,\lambda_0)})(\p_s+D_{(\gamma,\lambda_0)})V  =  \p_s^2 V -D_{(\gamma,\lambda_0)}^2V 
= 0.
$$
Taking $L^2$-scalar product with $V(s)^\perp$, using that
$D_{(\gamma,\lambda_0)}$ is self-adjoint, and using that $(\p_s
V)^\perp=\p_s(V^\perp)$, we obtain $\langle \p_s^2
V^\perp,V^\perp\rangle - \|D_{(\gamma,\lambda_0)}V^\perp\|^2=0$. By
assumption, the kernel of $D_{(\gamma,\lambda_0)}$ has dimension $1$
and is generated by $(\dot\gamma,-X)$. Hence there exists a constant
$c>0$ such that  
$$
\|D_{(\gamma,\lambda_0)} V(s)^\perp\|^2_{L^2}\ge c\|V(s)^\perp\|^2_{L^2},\qquad \forall \, s\in \R.
$$
As a consequence $\p_s^2\|V^\perp\|^2\ge 2\langle \p_s^2
V^\perp,V^\perp\rangle\ge 2c\|V^\perp\|^2$. Since $\|V^\perp\|\to 0$
as $s\to\pm\infty$, we infer by the maximum principle that
$V^\perp\equiv 0$. Thus $V(s)=a(s)(\dot \gamma,-X)$ and we obtain  
$$
0=D_{(u,\lambda)}V= a'(s)(\dot\gamma,-X) + a(s)D_{(\gamma,\lambda_0)}(\dot\gamma,-X)=a'(s)(\dot\gamma,-X),
$$
so that $a$ is constant. This proves that $\ker D_{(u,\lambda)}$ is generated by $(\dot\gamma,-X)$, as desired.

\medskip 

\noindent {\it Case~2.} $\p_s u\equiv 0$ and $\p_s\lambda\not\equiv 0$. 
By Steps~3 and~4 in Proposition~\ref{prop:tinjective}, the set of
injective points is open and dense in $\R\times S^1$. By
condition~\eqref{eq:final}, there exists a nonempty open set
$\Omega\subset \R\times S^1$ consisting of injective points such that
$\lambda(s')\notin S^1\cdot \lambda(s)$ for all $s'\neq s$ and all
$(s,\theta)\in\Omega$.  
Note that $\dot\lambda\neq 0$ and, up to further shrinking $\Omega$,
we can assume without loss of generality that $\tlambda$ is an
embedding on $\Omega$. 

We claim that $k(s)=0$ for all $(s,\theta)\in\Omega$. Arguing by
contradiction, we find $(s_0,\theta_0)\in\Omega$ such that $k(s_0)\neq
0$. We take $\zeta=0$, $\ell=0$, $Y=0$, $h=0$, and $A$ supported near
$S^1\cdot\lambda(s_0)$ and satisfying
$A(\lambda(s_0))\cdot\dot\lambda(s_0)=k(s_0)$. The first integral
in~\eqref{eq:tann1} vanishes, and the second integral is localized
near $s_0$ and is positive. This contradicts~\eqref{eq:tann1}.  

We now claim that $\eta\equiv 0$ on $\Omega$. If not, let
$(s_0,\theta_0)\in\Omega$ be such that $\eta(s_0,\theta_0)\neq 0$. Let
us consider a function $\th$ of the form
$\th(x,\lambda)=\phi(x)\psi(\lambda)$ such that $\psi$ is a cutoff
function supported near
$\tlambda(s_0,\theta_0)=(-\theta_0)\cdot\lambda(s_0)$, $\phi$ is
supported near $u(s_0,\theta_0)$ and satisfies  
$-\tJ_{\tlambda(s_0,\theta_0)}X_{\th_{\tlambda(s_0,\theta_0)}}(u(s_0,\theta_0))=\eta(s_0,\theta_0)$.
This determines uniquely an $S^1$-invariant function $h$ via
$h(\theta,x,\lambda)=\th(x,(-\theta)\cdot \lambda)$. We now remark
that, if the support of $\psi$ is small enough (depending on the
choice of $\phi$), we have  
$$
\langle \eta(s,\theta),-\tJ_{\tlambda(s,\theta)}X_{\th_{\tlambda(s,\theta)}}(u(s,\theta))\rangle \ge 0
$$
on $\R\times S^1$, and vanishes outside a small neighbourhood of
$(s_0,\theta_0)$. To see this, one uses that $(s_0,\theta_0)$ is an
injective point and that $\tlambda$ is an embedding on $\Omega$. We
now take $\zeta,\ell,Y,A$ to be zero, and $h$ as above. Then both
integrals in~\eqref{eq:tann1} are localized near
$(s_0,\theta_0)$. Since $k$ vanishes on $\Omega$, the second integral
vanishes, whereas the first one is positive. This
contradicts~\eqref{eq:tann1}.  

\medskip 

\noindent {\bf Remark.} The perturbation $h$ is admissible even if
$u\equiv x$ is a constant orbit. Indeed, in this case $\lambda$ is a
gradient trajectory of an $S^1$-invariant function on $S^{2N+1}$, so
that $\lambda(s_0)\notin S^1\cdot \ulambda$, with
$\ulambda:=\lim_{s\to\infty}\lambda(s)$. Thus, the Hamiltonian $H$
remains "split" in a neighbourhood of $\{x\}\times (S^1\cdot
\ulambda)$ under perturbations that are supported away from this set.  

\medskip 

\noindent {\it Case~3.} $\p_s u\not\equiv 0$. Let us first assume that
$u$ satisfies the assumptions of Proposition~\ref{prop:duinjective},
and let $\Omega\subset \R\times S^1$ be a nonempty open set consisting
of injective points and such that $du$ is injective on $\Omega$.  

We claim that $\eta\equiv 0$ on $\Omega$. If not, we can find
$(s_0,\theta_0)\in \Omega$ such that $\eta(s_0,\theta_0)\neq 0$.  
Moreover, we have $\p_s u(s_0,\theta_0)\neq 0$ by the definition of $\Omega$. 
Let $\tY:\widehat W\times S^{2N+1}\to \mathrm{End}(T\widehat W)$ be a
function supported near
$p_0:=(u(s_0,\theta_0),\tlambda(s_0,\theta_0))=(u(s_0,\theta_0),(-\theta_0)\cdot\lambda_0)$,
and which satisfies the relation
$\tY(p_0)J^{\theta_0}_\lambda(u(s_0,\theta_0))\p_s
u(s_0,\theta_0)=\eta(s_0,\theta_0)$. 
This uniquely determines an $S^1$-invariant function $Y$ via
$Y^\theta_\lambda(x):=\tY(x,(-\theta)\cdot \lambda)$. Taking
$\zeta=0$, $\ell=0$, $h=0$, $A=0$, and $Y$ as above,  
the first integral in~\eqref{eq:tann1} is localized near the injective
point $(s_0,\theta_0)$ and hence is positive, whereas the second
integral in~\eqref{eq:tann1} is zero. This
contradicts~\eqref{eq:tann1} and proves that $\eta\equiv 0$ on
$\Omega$.  

We now claim that $k(s)= 0$ for all $(s,\theta)\in \Omega$. Arguing by
contradiction, we find $(s_0,\theta_0)\in\Omega$ such that $k(s_0)\neq
0$. Let $\th:\widehat W\times S^{2N+1}\to \R$ be a function of the
form $\th(x,\lambda):=\phi(x)\psi(\lambda)$ such that $\phi$ is a
cutoff function near $u(s_0,\theta_0)$, and $\psi$ is supported near
$\tlambda(s_0,\theta_0)=(-\theta_0)\cdot\lambda(s_0)$ and satisfies
$\vec\nabla_\lambda\psi(\tlambda(s_0,\theta_0))=-k(s_0)$. This
uniquely determines an $S^1$-invariant function $h$ via
$h(\theta,x,\lambda):=\th(x,(-\theta)\cdot\lambda)$.  
The main observation is that, if the support of $\phi$ is small enough, then 
$$
\langle k(s),\int_{S^1}\vec\nabla_\lambda h(\theta,u(s,\theta),\lambda(s))\,d\theta\rangle\ge 0
$$
and vanishes outside a small neighbourhood of $(s_0,\theta_0)$. This
follows from the fact that $(s_0,\theta_0)$ is injective and the
assumption that $du(s_0,\theta_0)$ is injective, so that $u$ is an
embedding near $(s_0,\theta_0)$.  
Taking $\zeta=0$, $\ell=0$, $Y=0$, $A=0$, and $h$ as above, we see
that both integrals in~\eqref{eq:tann1} are localized near
$(s_0,\theta_0)$. Since $\eta$ was shown to vanish on $\Omega$, the
first integral vanishes, whereas the second integral is positive. This
contradicts~\eqref{eq:tann1} and proves the claim. 

\medskip 

We are now left to deal with the case when both asymptotes of $u$ are
constant and $u$ is a nonconstant gradient trajectory near
$\pm\infty$. It follows from Step~4 in
Proposition~\ref{prop:tinjective} that there exists $R>0$ large enough
such that $\p_su\neq 0$ on $\Omega:=(]-\infty,-R]\cup[R,\infty[)\times
S^1$ and the set of injective points is open and dense in this
domain. The same argument as above shows that $\eta\equiv 0$ on
$\Omega$.  

We claim that $k(s)=0$ for all $(s,\theta)\in\Omega$. If $\lambda$ is
constant near $-\infty$ or $+\infty$, the same construction as above
proves the claim. Let us therefore assume that $\lambda$ is a
nonconstant gradient trajectory near both $\pm\infty$.  

We now use that $(u,\lambda)$ satisfies~\eqref{eq:final2}, say at
$+\infty$. This implies that, for $s>0$ large enough, we have
$\lambda(\R\setminus\{s\})\cap S^1\cdot \lambda(s)=\emptyset$. Let us
choose an injective point $(s_0,\theta_0)$ with $s_0$ large enough,
such that $k(s_0)\neq 0$. Since $\dot\lambda(s_0)\neq 0$, we can
choose an $S^1$-invariant function $A$ supported in a neighbourhood of
$S^1\cdot\lambda(s_0)$ and satisfying $A(\lambda(s_0))\cdot
\dot\lambda(s_0)=k(s_0)$. The first integral in~\eqref{eq:tann1}
vanishes since $\eta$ was shown to be zero near $+\infty$, and the
second integral is localized near $s_0$ and is positive. This
contradicts~\eqref{eq:tann1} and finishes the proof. 
\end{proof}

%%%%%%%%%%%%%%%%%%%%%%%%%%%%%%%%%%%%%%%%%%%%%% 
%%%%%%%%%%%%%%%%%%%%%%%%%%%%%%%%%%%%%%%%%%%%%% 
%%%%%%%%%%%%%%%% References %%%%%%%%%%%%%%%%%% 
%%%%%%%%%%%%%%%%%%%%%%%%%%%%%%%%%%%%%%%%%%%%%% 
%%%%%%%%%%%%%%%%%%%%%%%%%%%%%%%%%%%%%%%%%%%%%% 

\end{document}